\newenvironment{breakablealgorithm}
  {% \begin{breakablealgorithm}
   \begin{center}
     \refstepcounter{algorithm}% New algorithm
     \hrule height.8pt depth0pt \kern2pt% \@fs@pre for \@fs@ruled
     \renewcommand{\caption}[2][\relax]{% Make a new \caption
       {\raggedright\textbf{\ALG@name~\thealgorithm} ##2\par}%
       \ifx\relax##1\relax % #1 is \relax
         \addcontentsline{loa}{algorithm}{\protect\numberline{\thealgorithm}##2}%
       \else % #1 is not \relax
         \addcontentsline{loa}{algorithm}{\protect\numberline{\thealgorithm}##1}%
       \fi
       \kern2pt\hrule\kern2pt
     }
  }{% \end{breakablealgorithm}
     \kern2pt\hrule\relax% \@fs@post for \@fs@ruled
   \end{center}
  }
\newcommand{\der}{\delta}
\newcommand{\E}{\mathbb E}
\newcommand{\R}{\mathbb R}
\newcommand{\N}{\mathbb N}
\newcommand{\ca}{\mathcal A}
\newcommand{\ce}{\mathcal E}
\newcommand{\cf}{\mathcal F}
\newcommand{\cp}{\mathcal P}
\newcommand{\al}{\alpha}
\newcommand{\ga}{\gamma}
\newcommand{\la}{\lambda}
\newcommand{\om}{\omega}
\newcommand{\oom}{\Omega}
\newcommand{\si}{\sigma}
\newcommand{\lp}{\left(}
\newcommand{\rp}{\right)}
\newcommand{\lc}{\left[}
\newcommand{\rc}{\right]}
\newcommand{\lcl}{\left\{}
\newcommand{\rcl}{\right\}}
\newcommand{\lln}{\left|}
\newcommand{\rrn}{\right|}
\numberwithin{equation}{section}
\newtheorem{theorem}{Theorem}[section]
\newtheorem{lemma}{Lemma}[section]
\newtheorem{proposition}{Proposition}[section]
\newtheorem{corollary}{Corollary}[section]
\newtheorem{definition}{Definition}[section]
\newtheorem{assumption}{Assumption}[section]
\theoremstyle{remark}
\newtheorem{remark}{Remark}[section]
\newcommand{\bean}{\begin{eqnarray*}}
\newcommand{\eean}{\end{eqnarray*}}
\newcommand{\ben}{\begin{enumerate}}
\newcommand{\een}{\end{enumerate}}
\newcommand{\beq}{\begin{equation}}
\newcommand{\eeq}{\end{equation}}
\title[fluid limit for time-varying many server loss queues]{Fluid Limits for Time-Varying Many-Server Queues with Finite Capacity}
\author[MINGRUI WANG]{MINGRUI WANG}
\address{(M. Wang) Harold and Inge Marcus Department of Industrial and Manufacturing
Engineering, The Pennsylvania State University, University Park, PA 16802 United States\\
}
\email{mvw5822@psu.edu}
\author[PRAKASH CHAKRABORTY]{PRAKASH CHAKRABORTY}
\address{(P. Chakraborty) Harold and Inge Marcus Department of Industrial and Manufacturing
Engineering, The Pennsylvania State University, University Park, PA 16802 United States\\
}
\email{prakashc@psu.edu}
\thanks{P. Chakraborty is partially supported by the National Science Foundation under grant DMS-2153915}
\date{\today}
\begin{document}
\begin{abstract} 
This paper develops fluid limits for nonstationary many-server loss systems with general service-time distributions. For the zero-buffer $M_t/G/n/n$ queuing model, we prove a functional strong law of large numbers for the fraction of busy servers and characterize the limit by a nonlinear Volterra integral equation with discontinuous coefficients induced by instantaneous blocking. Well-posedness is established through an appropriate solution concept, yielding the time-varying acceptance probability without heuristic approximations. We then treat the finite-buffer $M_t/G/n/(n+b_n)$ regime, proving a functional strong law of large numbers for the triplet of fractions of busy servers, occupied buffers, and cumulative departures, whose limit satisfies a coupled system of three discontinuous Volterra equations capturing the interaction of service completions, buffer occupancy, and admission control at the capacity boundary. We establish well-posedness and convergence of the time-varying acceptance probability. Our theoretical results are supported by numerical simulations for both zero and finite-buffer regimes, illustrating the convergence of transient acceptance probabilities guaranteed by our theory. Finally, we use the fluid limits to derive optimal staffing and buffer-capacity for both time-varying loss systems.
\end{abstract}
\maketitle

\section{Introduction}

Many modern service systems operate with limited capacity, meaning customers are turned away or lost when the system is full. Classic examples include telephone networks with a fixed number of trunk lines \cite{Kelly1986,Eklundh1986,DaehyoungHong1986}, hospital or emergency units with limited beds \cite{deBruin2009,Bekker2016,Andersen2017}, wireless and optical networks with bandwidth and channel constraints \cite{Li1996,Vakilinia2015,Wang2014,Zalesky2007}, emergency services like ambulances and self-driving cars \cite{Hampshire2020}. These loss models, sometimes called Erlang loss systems, have been studied extensively under steady-state conditions. In fact, the famous Erlang-B formula \cite{erlang1917} developed over a century ago for telephone traffic gives the steady-state blocking probability for an $M/M/n/n$ queue and remains a cornerstone result in stationary loss models. Yet real-world systems are rarely stationary: arrival rates and service demands fluctuate over time, and service durations are not necessarily memoryless. As a result, steady-state measures often fail to capture short-term dynamics, leading to inefficient or unstable operational decisions. Nonstationary, non-Markovian loss systems such as $M_t/G/n/n$ queues are significantly more challenging to analyze, and closed-form transient performance formulas are virtually impossible to obtain. This difficulty motivates the use of stochastic-process approximations for performance analysis, especially in many-server regimes where the number of servers $n$ is large.

Fluid limits or functional strong laws of large numbers (FSLLN) provide deterministic approximations to many-server queuing systems by tracking the scaled system state as $n\to\infty$. These limits reveal the macroscopic law of motion of complex stochastic systems. Foundational work such as \cite{halfin1981heavy, mandelbaum1998} introduced asymptotic techniques for many-server systems and Markovian service networks.  Subsequent research established fluid and diffusion limits under increasingly general conditions, including time-varying arrivals and non-exponential service times \cite{reed2009g, kaspi2011law, liu2012many, liu2014many, zhang2013fluid}.
In contrast to these limit theorems, an extensive applied literature has developed practical approximations and staffing heuristics for time-varying service systems. Related work, including \cite{jennings1996, green1991, green2007, whitt2019, whitt2018time}, proposes pointwise-stationary (POS), modified-offered-load (MOL), and other transient approximations aimed at dynamic staffing, capacity planning, and transient performance evaluation. These studies underscore the need for rigorous transient characterizations that connect operational heuristics with asymptotic theory.

\subsection{Overview of Approach and Key Insights.}
This paper develops a rigorous fluid-limit framework for analyzing time-varying many-server \emph{loss systems}. Specifically, we study a sequence of systems with nonhomogeneous Poisson process (NHPP) arrivals and general service-time distributions, where both the number of servers and the arrival rate scale linearly with system size. The resulting limit is characterized by a nonlinear Volterra integral equation (VIE) that captures the transient evolution of the system’s occupancy and, crucially, yields the time-dependent blocking and acceptance probabilities in the large-scale regime. Our work builds upon \cite{chakraborty2021many}, which established a fluid limit for the nonstationary many-server $M_t/G/n/n$ loss system using a semimartingale representation of the instantaneous acceptance mechanism. We enhance that framework by introducing a refined convergence proof based on the discontinuous Volterra equation methodology of \cite{kiffe1979discontinuous}, ensuring well-posedness and uniqueness of the limit even under nonsmooth boundary dynamics.

A distinguishing feature of our analysis is the emergence of nonlinear Volterra equations with \emph{discontinuous coefficients}, induced by the instantaneous blocking constraint at full capacity. This structure departs sharply from classical Markovian formulations and provides a new analytic mechanism to capture threshold-type, transient blocking phenomena in nonstationary systems. The discontinuity is not merely a technical complication. It serves as the deterministic counterpart of the system’s stochastic acceptance barrier and encodes the operational behavior of loss systems under time-varying load.

From a methodological standpoint, our results bridge three traditions in the study of nonstationary queues: (i) steady-state or quasi-stationary approximations such as the Erlang-B, PSA, and MOL methods \cite{green1991, massey1994analysis, green2007, whitt2017many}; (ii) computational and moment-based approximation methods, including cumulant and truncated-ODE approaches for time-varying loss and many-server systems \cite{pender2015, pender2017, jennings1996, whitt2019, whitt2018time}; and (iii) rigorous asymptotic limit theorems \cite{halfin1981heavy, mandelbaum1998, reed2009g}. The fluid model derived here serves simultaneously as a limit theorem and a computational engine. It is a deterministic equation directly solvable by numerical methods, providing transient blocking probabilities without Monte Carlo simulations. This connection between rigorous scaling limits and practical performance computation strengthens the link between applied probability and operational analysis, particularly in time-dependent service environments such as healthcare scheduling, cloud service provisioning, and mobility-on-demand platforms. Accurate transient blocking or acceptance probabilities support dynamic staffing and admission-control decisions  under fluctuating demand. Whereas traditional time-varying approximations assume local equilibrium, our limit provides a theoretically consistent foundation for approximating time-varying acceptance probabilities under nonstationary demand, which is central to time-dependent operations management. 

Although our analysis focuses on NHPP arrivals, the fluid-limit structure depends only on the arrival-rate trajectory rather than Poisson-specific properties. The same analytical framework extends naturally to renewal or Cox processes with time-dependent intensities. This generality implies that the derived acceptance probabilities provide accurate first-order approximations for a broad class of time-varying queuing systems, highlighting the structural robustness of the fluid-limit formulation.

\subsection{Contributions.}
We establish functional strong laws of large numbers for nonstationary many-server loss systems under general service-time distributions. Both the zero-buffer ($M_t/G/n/n$) and finite-buffer ($M_t/G/n/(n+b_n)$) systems are analyzed in a common framework that scales the number of servers and the arrival rate proportionally with system size \footnote{our analysis readily extends to time-varying piecewise constant service and buffer capacities. However, for simplicity, we consider the case where both are constant.}. The resulting limits are deterministic trajectories described by nonlinear Volterra integral equations (VIEs) that capture the transient evolution of the system occupancy and yield the associated time-dependent blocking and acceptance probabilities.

\noindent    
\emph{(i) Zero-buffer systems.}  
For the $M_t/G/n/n$ model with nonhomogeneous Poisson arrivals of rate $\lambda(\cdot)$ and i.i.d.\ service times with distribution $G$, let $N_t^n$ denote the number in system and $\bar{N}_t^n = N_t^n/n$ its scaled occupancy. We prove that when the system starts empty $\bar{N}^n$ converges almost surely to a deterministic function $\rho$ (see Theorem~\ref{thm:fluidlimitini0uni} for a more general and precise formulation) such that $\rho$ solves the following discontinuous VIE:
\beq\label{eq:vie-1}
\rho_t = \int_0^t \mathbb{1}_{\{\rho_{u-}<1\}}\bar{G}(t-u)\la(u) du.
\eeq
where $\bar{G}$ is the service-time survival function.  The integral equation above has discontinuous coefficients due to the indicator $\mathbb{1}_{\{\rho_{u-}<1\}}$, reflecting instantaneous blocking at capacity.  We refine the convergence analysis of \cite{chakraborty2021many} by introducing the discontinuous Volterra solution concept of \cite{kiffe1979discontinuous}. Specifically, $\rho$ solves \eqref{eq:vie-1} if there exists an auxiliary acceptance function $w(\cdot)$ such that
$$%\label{eq:w(u)-1}
\rho_t = \int_0^t w(u) \bar{G}(t-u) \la(u) du.
$$
which ensures well-posedness even under nonsmooth boundary dynamics. As a corollary (see Corollary~\ref{cor:acceptprob} for a precise formulation), we obtain that for $\la$-almost every $t$, the acceptance probability 
\beq\label{eq:acceptance-limit-1}
P(\bar{N}_t^n < 1) \to w(t).
\eeq
In addition, we identify $w(t) = \frac{d(t)}{\la(t)} \wedge 1$, where $d(t)$ is the instantaneous departure rate, which agrees with heuristic expectations. Thus our analysis yields a rigorous FSLLN that provides a direct functional relationship between the time-varying acceptance (or blocking) probability and the system primitives through a deterministic limit equation~\eqref{eq:acceptance-limit-1}.

\noindent
\emph{(ii) Finite-buffer systems.}  
We extend the analysis to the $M_t/G/n/(n+b_n)$ model, where the buffer size $b_n$ may scale with $n$ so that $b_n/n \to \beta \in [0,\infty)$. Denote by $\bar{S}_t^n$, $\bar{Q}_t^n$, and $\bar{D}_t^n$ the scaled numbers of busy servers, queued customers, and cumulative departures, respectively. We prove (see Theorem~\ref{thm:fluidlimitbuf} for a more precise formulation) that the joint limit of these processes $(\bar{S}^n,\bar{Q}^n,\bar{D}^n)$ is given by the tuple $(\rho,\eta,D)$ that satisfies a system of three coupled nonlinear VIEs:
\begin{align*}%\label{eq:rho, eta, D-1}
&\rho_{t}=\int_{0}^{t} \mathbb{1}_{\{\rho_{u-}<1\}} \bar{G}(t-u) \lambda(u) d u  +\int_{0}^{t} \mathbb{1}_{\{\eta_{u-}>0\}} \bar{G}(t-u) d(u) d u, \nonumber\\
& \eta_{t}=\int_{0}^{t} \mathbb{1}_{\{\rho_{u-}=1\}} \mathbb{1}_{\{\eta_{u-}<\beta\}} \lambda(u) d u  -\int_{0}^{t} \mathbb{1}_{\{\eta_{u-}>0\}} d(u) d u, \nonumber\\
& D_t=\int_{0}^{t} \mathbb{1}_{\{\rho_{u-}<1\}} G(t-u) \lambda(u) d u  +\int_{0}^{t} \mathbb{1}_{\{\eta_{u-}>0\}} G(t-u) d(u) d u,
\end{align*}
where $d(\cdot)$ denotes the fluid departure rate. These equations jointly describe the evolution of service completions, queue occupancy, and admission control at the boundary. As in the zero-buffer case, they are interpreted through auxiliary acceptance functions $(w^1,w^2,w^3)$ ensuring existence and uniqueness of the limit. The resulting acceptance probability satisfies
$$%\label{eq:acceptance-limit-2}
P(\bar{Q}_t^n < \frac{b_n}{n}) \to w^3(t),
$$
where $w^3(t) = \frac{d(t)}{\la(t)} \wedge 1$ as in the zero-buffer case. This extension introduces significant technical challenges beyond the zero-buffer case, requiring new arguments to handle the emerging coupled nonlinear Volterra systems.

\noindent
\emph{(iii) Analytical and operational significance.}  
The discontinuous Volterra framework developed here provides the first rigorous characterization of transient blocking and acceptance probabilities in large-scale, time-varying service systems with general service-time distributions. It yields a numerically tractable representation: the limit equations can be solved efficiently via numerical methods, enabling direct computation of transient performance measures without simulation. Beyond analytical clarity, the framework serves as a practical foundation for operational decision-making. We demonstrate its use for optimal staffing and buffer capacity design, showing how the deterministic fluid model can approximate system-level performance with high accuracy. These formulations extend naturally to dynamic versions, where time-dependent staffing or capacity policies adapt to fluctuating demand. Overall, these results unify the transient analysis of Erlang loss and delay systems and offer a theoretically grounded computational tool for performance evaluation and dynamic control in applications such as call centers, hospitals, and cloud-service platforms.

Together, the zero and finite-buffer results form an integrated theory of time-varying many-server systems. The discontinuous Volterra formulation opens the door to higher-order diffusion refinements and control-theoretic extensions.

\subsection{Paper Organization.} 
The remainder of the paper is organized as follows. Section~\ref{sec:prelim} presents the preliminaries, including notation, key probability results, weak convergence tools, and the analytical framework for discontinuous Volterra integral equations (VIEs). Section~\ref{sec:zero-buffer} focuses on the zero-buffer $M_t/G/n/n$ system. We derive the fluid limit, prove the functional strong law of large numbers, and establish convergence of the time-varying acceptance and blocking probabilities. Section~\ref{sec:finite-buffer} extends the analysis to the finite-buffer $M_t/G/n/(n+b_n)$ model. Here, we characterize the joint fluid limit of the fractions of busy servers, occupied buffers, and departures as the solution to a system of coupled Volterra integral equations, and we prove convergence of the corresponding acceptance and blocking probabilities. Section~\ref{sec:numerics} provides numerical experiments that illustrate the accuracy and interpretability of the fluid-limit approximation across both zero- and finite-buffer regimes, in addition to optimal server and capacity applications. A brief concluding Section~\ref{sec:conclusion} summarizes the findings and outlines potential extensions, including diffusion refinements and control applications.

\section{Preliminaries and Notations}\label{sec:prelim}

In this section we present some preliminary results that will be useful later on.
\subsection{Convergence in Skorokhod Space.}

Let $\mathbb{D}=\mathbb{D}[0, T]$ denote the space of c\`{a}dl\`{a}g (right-continuous with left limits) functions on $[0, T]$. For a function $f \in \mathbb{D}$ and a set $T_0 \subseteq [0, T]$, we denote its modulus of continuity on $T_0$ as
$$
w_f(T_0) = \sup_{s,t \in T_0} |f(t) - f(s)|.
$$
For any $\delta \in (0, T)$, let
$$
w_f'(\der) = \inf_{\cp: {\|\cp\| \leq \der}}~ \max_{0 < i \leq |\cp|} w_f([t_{i-1}, t_i)),
$$
where $\cp$ runs over the set of all partitions of $[0,T]$, in the sense that a generic $\cp$ looks like
$$
\cp = \lcl 0=t_0, \ldots, t_{|\cp|} = T \rcl,
$$
and $\|\cp\|$ denotes the mesh or norm of the partition $\cp$:
$$
\|\cp\| = \max_{1\leq i < |\cp|} \lln t_i - t_{i-1} \rrn.
$$
A function $f$ belongs to the space $\mathbb{D}$ if and only if 
$$
\lim_{\delta \downarrow 0} w'_f(\delta) = 0.
$$
For a proof and related discussion, see \cite[Chapter 13]{billingsley2013convergence}. The Skorokhod distance between two functions $f, g \in \mathbb{D}$ is defined as
$$
d_S(f, g) = \inf_{\lambda \in \Lambda} \max \left\{ \sup_{t \in [0,T]} |\lambda(t) - t|, \sup_{t \in [0,T]} |f(\lambda(t)) - g(t)| \right\},
$$
where $\Lambda$ is the class of strictly increasing, continuous mappings of $[0,T]$ to itself. The topology on $\mathbb{D}$ induced by this metric is known as the Skorokhod topology.
%\begin{remark}
It can be shown that $\mathbb{D}$ is not a complete space with respect to the Skorokhod distance $d_S$ but there exists a topologically equivalent metric $d_0$ with respect to which $\mathbb{D}$ is complete.
%\end{remark}
For $0 \leq t_1<\cdots<t_k \leq T$, define the natural projection $\pi_{t_1 \cdots t_k}$ from $\mathbb{D}$ to $\R^k$ as:
$$
\pi_{t_1 \cdots t_k}(x)=\left(x\left(t_1\right), \ldots x\left(t_k\right)\right),
$$
and the Borel $\sigma$-field of $\mathbb{D}$ as $\mathcal{D}$. For probability measures $\mathbb{P}$ on $(\mathbb{D}, \mathcal{D})$, denote by $T_\mathbb{P}$ the set of $t$ in $[0,T]$ for which the projection $\pi_t$ is continuous except at points forming a set of $\mathbb{P}$-measure 0. We include some useful results from \cite{billingsley2013convergence}:
\begin{theorem}\label{thm:tightness}
A sequence of probability measures $\{\mathbb{P}_n\}$ on $(\mathbb{D},\mathcal{D})$ is tight if and only if:
$$\quad \lim _{a \rightarrow \infty} \limsup _n \mathbb{P}_n\left[x:\sup_{t\in[0,T]}|x(t)| \geq a\right]=0,$$
and for each $\varepsilon>0$,
$$\quad \lim _\delta \limsup _n \mathbb{P}_n\left[x: w_x^{\prime}(\delta) \geq \varepsilon\right]=0.$$
\end{theorem}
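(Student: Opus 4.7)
The plan is to reduce both directions of the theorem to an Arzel\`a--Ascoli type characterization of relatively compact subsets of $\mathbb{D}[0,T]$: a set $A \subset \mathbb{D}$ has compact closure in the Skorokhod topology if and only if $\sup_{x\in A} \sup_t |x(t)| < \infty$ and $\lim_{\delta \downarrow 0} \sup_{x\in A} w'_x(\delta) = 0$. This lemma is the substantive analytic input; once it is available, both implications reduce to standard measure-theoretic bookkeeping that translates quantitative compactness into the two limit statements in the theorem.

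For the forward (necessity) direction, fix $\eta > 0$ and, by tightness, pick a compact $K_\eta \subset \mathbb{D}$ with $\mathbb{P}_n(K_\eta) \geq 1-\eta$ for all $n$. Applying the characterization above to $K_\eta$, there exists $a_\eta$ with $K_\eta \subset \{x: \sup_t |x(t)| \leq a_\eta\}$, and for every $\varepsilon > 0$ there exists $\delta_\varepsilon$ with $K_\eta \subset \{x: w'_x(\delta_\varepsilon) < \varepsilon\}$. Hence $\limsup_n \mathbb{P}_n[\sup_t |x(t)| \geq a_\eta] \leq \eta$ and $\limsup_n \mathbb{P}_n[w'_x(\delta_\varepsilon) \geq \varepsilon] \leq \eta$; letting $a \to \infty$ (respectively $\delta \downarrow 0$) and then $\eta \downarrow 0$ yields the two stated conditions.

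For the reverse (sufficiency) direction, I build a relatively compact set of large $\mathbb{P}_n$-probability. Given $\eta > 0$, use condition (i) to pick $a > 0$ with $\limsup_n \mathbb{P}_n[\sup_t |x(t)| \geq a] \leq \eta/2$, and condition (ii) to pick, for each integer $k \geq 1$, a $\delta_k > 0$ with $\limsup_n \mathbb{P}_n[w'_x(\delta_k) \geq 1/k] \leq \eta/2^{k+1}$. A routine diagonal adjustment (enlarging $a$ and shrinking the $\delta_k$ to absorb the finitely many small-$n$ terms, using that each individual $\mathbb{P}_n$ is tight on the Polish space $(\mathbb{D}, d_0)$) makes these inequalities hold uniformly in $n$. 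Setting
$$
K = \{x: \sup_t |x(t)| \leq a\} \cap \bigcap_{k\geq 1} \{x: w'_x(\delta_k) \leq 1/k\},
$$
a union bound gives $\mathbb{P}_n(K) \geq 1-\eta$ for every $n$, and the compactness characterization applied to $\overline{K}$ shows that $\overline{K}$ is compact, which establishes tightness.

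The main obstacle is the Arzel\`a--Ascoli characterization itself. Proving that uniform boundedness together with $\sup_{x\in A} w'_x(\delta) \to 0$ forces relative compactness requires, for any sequence $(x_n) \subset A$, constructing time-change homeomorphisms $\lambda_n \in \Lambda$ that align the jumps of $x_n$ closely enough that a subsequence of $(x_n \circ \lambda_n)$ converges uniformly, and then invoking completeness of the equivalent metric $d_0$ to conclude Skorokhod convergence of the original subsequence. The passage from the incomplete Skorokhod distance $d_S$ to the complete metric $d_0$ is essential here, since $d_S$-Cauchy sequences need not converge. Once this compactness lemma is in hand, neither direction of the theorem requires further analytic work beyond the standard tightness manipulations sketched above.
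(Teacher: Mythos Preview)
Your proposal is correct and follows the standard argument from Billingsley's \emph{Convergence of Probability Measures} (Theorems~12.3 and~13.2 there). Note, however, that the paper does not actually prove this theorem: it is stated in the preliminaries section as a quoted result from \cite{billingsley2013convergence}, so there is no ``paper's own proof'' to compare against---your sketch simply fills in what the cited reference contains.
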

\begin{theorem}\label{thm:weaklyconv}
If $\left\{\mathbb{P}_n\right\}$ is tight, and if $\mathbb{P}_n \pi_{t_1 \cdots t_k}^{-1} \Rightarrow \mathbb{P} \pi_{t_1 \cdots t_k}^{-1}$ holds whenever $t_1, \ldots t_k$ all lie in $T_{\mathbb{P}}$, then $\mathbb{P}_n \Rightarrow \mathbb{P}$.
\end{theorem}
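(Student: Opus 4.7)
The plan is to combine Prokhorov's theorem with the subsequence principle for weak convergence. Since $(\mathbb{D}, d_0)$ is a Polish space, tightness of $\{\mathbb{P}_n\}$ implies relative compactness, so every subsequence $\{\mathbb{P}_{n_k}\}$ admits a further subsequence $\{\mathbb{P}_{n_{k_j}}\}$ converging weakly to some probability measure $\mathbb{Q}$ on $(\mathbb{D}, \mathcal{D})$. It then suffices to show that every such subsequential limit $\mathbb{Q}$ must equal $\mathbb{P}$; the subsequence principle will then deliver $\mathbb{P}_n \Rightarrow \mathbb{P}$.

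To identify $\mathbb{Q}$ with $\mathbb{P}$, I would match their finite-dimensional distributions at times in $T_{\mathbb{P}} \cap T_{\mathbb{Q}}$. For any $t_1, \dots, t_k$ in this intersection, the projection $\pi_{t_1 \cdots t_k}$ is continuous on sets of both full $\mathbb{P}$- and $\mathbb{Q}$-measure. The continuous mapping theorem applied to $\mathbb{P}_{n_{k_j}} \Rightarrow \mathbb{Q}$ therefore gives $\mathbb{P}_{n_{k_j}} \pi_{t_1 \cdots t_k}^{-1} \Rightarrow \mathbb{Q} \pi_{t_1 \cdots t_k}^{-1}$, while the hypothesis yields $\mathbb{P}_{n_{k_j}} \pi_{t_1 \cdots t_k}^{-1} \Rightarrow \mathbb{P} \pi_{t_1 \cdots t_k}^{-1}$. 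Uniqueness of weak limits on $\mathbb{R}^k$ forces the finite-dimensional distributions of $\mathbb{Q}$ and $\mathbb{P}$ to coincide on $(T_{\mathbb{P}} \cap T_{\mathbb{Q}})^k$.

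The final step, which I expect to be the main obstacle, is to upgrade finite-dimensional agreement on the restricted time set $T_{\mathbb{P}} \cap T_{\mathbb{Q}}$ to full equality of measures on $(\mathbb{D}, \mathcal{D})$. The key observation is that $T_{\mathbb{P}}^c$ and $T_{\mathbb{Q}}^c$ are each at most countable: a time $t$ fails to lie in $T_{\mathbb{P}}$ precisely when $\mathbb{P}(\{x : x(t-) \neq x(t)\}) > 0$, and a standard exhaustion over jump-size thresholds shows that only countably many time points can carry positive jump probability under a fixed measure on $\mathbb{D}$. Consequently $T_{\mathbb{P}} \cap T_{\mathbb{Q}}$ is co-countable and dense in $[0,T]$. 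A $\pi$-$\lambda$ (or monotone-class) argument, exploiting right-continuity of c\`adl\`ag paths to recover $\pi_t$ as the pointwise limit of $\pi_{t_j}$ along any sequence $t_j \downarrow t$ with $t_j \in T_{\mathbb{P}} \cap T_{\mathbb{Q}}$, then shows that the algebra of cylinder sets based at time points in this dense subset generates $\mathcal{D}$. Agreement of finite-dimensional laws on the generating class thus propagates to agreement on all of $\mathcal{D}$, forcing $\mathbb{Q} = \mathbb{P}$ and completing the proof.
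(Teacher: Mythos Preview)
Your proposal is correct and follows the standard argument. Note, however, that the paper does not actually prove this theorem: it is quoted in Section~\ref{sec:prelim} as a preliminary result from \cite{billingsley2013convergence} (Chapter~13), so there is no ``paper's own proof'' to compare against. The approach you outline---Prokhorov plus the subsequence principle, identification of subsequential limits via finite-dimensional distributions at times in $T_{\mathbb{P}}\cap T_{\mathbb{Q}}$, and the fact that this set is co-countable and hence determining---is exactly the classical proof found in Billingsley.
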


\subsection{Counting Measure.} 
Let $(\oom, \cf, \bm{\mathcal{F}}= ( \cf_t )_{t \geq 0}, \mathbb{P})$ be a filtered probability space. Let $(N_t)_{t \geq 0}$ be a point process given by a sequence $(T_n)_{n \geq 1}$ of jump times, that is
$$
N_t :=N\left((0,t]\right)= \sum_{i=1}^{\infty} \mathbb{1}_{\{T_i \leq t\}},
$$
where $N(\cdot)=\sum_{n\geq 1}\delta_{T_n}$ is the corresponding counting measure and $\delta_y$ stands for the Dirac measure at $y$. Suppose in addition the $n^{\textnormal{th}}$ jump time or arrival $T_n$ has a corresponding {mark or} random variable $Z_n$ taking values in some measurable space $(E, \ce)$. Then $(T_n, Z_n)_{n \geq 1}$ is called an $E$-marked point process. Let $\mathcal{M}^N(\cdot \times \cdot)$ be the counting measure of the marked point process, that is, for each $C \in\R,L \in \ce$
$$
\mathcal{M}^N(C\times L) =\sum_{i=1}^{\infty}\mathbb{1}_{\{T_i \in C\}} \mathbb{1}_{\{Z_i \in L\}} .
$$
This implies for measurable functions $\varphi:(\mathbb{R}, \mathcal{B}(\mathbb{R})) \times (E,\ce)\rightarrow(\overline{\mathbb{R}}, \mathcal{B}(\overline{\mathbb{R}}))$
$$%\label{eq:count-mre-rep}
\int_0^t \int_{E} \varphi(u, z) \mathcal{M}^N(du \times dz) = \sum_{i=1}^{\infty} \varphi(T_i, Z_i) \mathbb{1}_{\{T_i \leq t\}}.
$$
We recall the notions of intensity measure and intensity function following \cite{bremaud2024introduction}.
\begin{definition}{\cite[Def 10.2.13]{bremaud2024introduction}}\label{subsec:intensity}
The intensity measure $\nu$ of a locally finite point process $N$ on $\R^m$ is defined by 
\begin{equation*}
	C \mapsto \nu(C):=\E[N(C)] \quad\left(C \in \mathcal{B}\left(\mathbb{R}^m\right)\right)	.
\end{equation*}
In addition, if $\nu$ is of the form $\nu(C)=\int_C \zeta(x)dx$ for some non-negative measurable function $\zeta:\R^m \rightarrow\R$, the point process $N$ is said to admit the intensity function $\zeta(x)$.
\end{definition} 
For a point process $N$ with intensity measure $\nu$ and intensity function $\zeta$, we introduce the Campbell's formula from \cite[Thm~10.2.15]{bremaud2024introduction}:
\begin{theorem}\label{thm:campbell}
For all measurable functions $\varphi: \mathbb{R}^m \rightarrow \mathbb{R}$ which are non-negative or $\nu$-integrable, the integral $\int_{\R^m}\varphi(x)N(dx)$ is well defined and
$$
\mathbb{E}\left[\int_{\R^m}\varphi(x)N(dx)\right]=\int_{\R^m}\varphi(x)\nu(dx)=\int_{\R^m}\varphi(x)\zeta(x)dx .
$$
In particular, $\int_{\R^m}\varphi(x)N(dx)$ is a.s. finite if $\varphi$ is $\nu$-integrable.
\end{theorem}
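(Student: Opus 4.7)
The plan is to proceed by the standard measure-theoretic induction, starting from indicators and extending successively to non-negative simple functions, to non-negative measurable functions, and finally to $\nu$-integrable functions. For $\varphi = \mathbb{1}_C$ with $C \in \mathcal{B}(\R^m)$, the defining relation $\int_{\R^m} \mathbb{1}_C(x)\, N(dx) = N(C)$ combined with Definition~\ref{subsec:intensity} immediately yields
$$\E\Bigl[\int_{\R^m} \mathbb{1}_C(x)\, N(dx)\Bigr] = \E[N(C)] = \nu(C) = \int_{\R^m} \mathbb{1}_C(x)\, \zeta(x)\, dx,$$
and linearity of both the pathwise integral $\int \varphi\, dN$ and of the expectation extends this identity to any non-negative simple $\varphi$.

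For general non-negative measurable $\varphi$, choose non-negative simple functions $\varphi_k \uparrow \varphi$. Since $N(\omega,\cdot)$ is a locally finite non-negative measure for each $\omega$, the pathwise monotone convergence theorem gives $\int \varphi_k\, dN \uparrow \int \varphi\, dN$ almost surely, which in particular ensures that the limit is a measurable function of $\omega$. Taking expectations and applying monotone convergence on the probability side, together with monotone convergence for the Lebesgue integral on the right, produces
$$\E\Bigl[\int_{\R^m} \varphi\, dN\Bigr] = \lim_k \E\Bigl[\int_{\R^m} \varphi_k\, dN\Bigr] = \lim_k \int_{\R^m} \varphi_k\, \zeta\, dx = \int_{\R^m} \varphi\, \zeta\, dx.$$
For $\nu$-integrable $\varphi$ I would decompose $\varphi = \varphi^+ - \varphi^-$ and apply the previous step to each part; finiteness of $\int \varphi^{\pm}\, \zeta\, dx$ forces $\E[\int \varphi^{\pm}\, dN] < \infty$, so both pathwise integrals are a.s.\ finite and subtraction yields both the main identity and the in-particular assertion.

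The only subtlety is the measurability of $\omega \mapsto \int \varphi\, dN(\omega)$ under the pathwise monotone limit at the second step, which is automatic because it is a pointwise limit of measurable random variables. Since this is the classical Campbell formula (Br\'emaud, Theorem~10.2.15), I expect no conceptual obstruction beyond routine execution of the standard machine; the assumption that $N$ is locally finite (hence $\sigma$-finite on bounded Borel sets) is the only structural property of the point process that is actually used.
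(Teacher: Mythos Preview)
Your argument is correct and is exactly the standard measure-theoretic induction used to establish Campbell's formula. Note, however, that the paper does not prove this theorem at all: it is quoted as a preliminary result from \cite[Thm~10.2.15]{bremaud2024introduction}, so there is no in-paper proof to compare against.
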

\subsection{Discontinuous Volterra Integral Equation.} We recall the notion of solution for discontinuous Volterra integral equations, as presented in \cite{kiffe1979discontinuous}. First, we introduce some related notations. For any $p \in L_{\text{loc}}^{\infty}(-\infty, \infty)$ and any $\epsilon > 0$, define:
		\begin{equation*}
			\underline{p}_\epsilon(t)=\underset{|t-s|<\epsilon}{\operatorname{ess} \inf } ~p(s), \quad \bar{p}_\epsilon(t)=\underset{|t-s|<\epsilon}{\operatorname{ess} \sup } ~p(s) .
		\end{equation*}
In addition, for $t \in [0,T]$ define:
\begin{equation}\label{eq:g-bar}
	\underline{p}(t)=\lim _{\epsilon \rightarrow 0} \underline{p}_\epsilon(t), \quad \bar{p}(t)=\lim _{\epsilon \rightarrow 0} \bar{p}_\epsilon(t) .
\end{equation}
\begin{definition}\label{def:sol_Vol}
Let $p :[0,\infty)\rightarrow\R$ and $q :[0,T]\rightarrow\R$ be bounded functions. Furthermore, let $a \in L^1[0, T]$. A pair of functions $x:[0,T]\rightarrow\R$ and $z:[0,T]\rightarrow\R$ is said to be a solution of the Volterra integral equation
\begin{equation*}%\label{eq:Volteq}
	x(t)+\int_0^t a(t-s) p(x(s)) d s=q(t), \quad 0 \leq t \leq T
\end{equation*} 
if $x$ and $z$ are bounded and
	\begin{equation*}
		\underline{p}(x(t)) \leq z(t) \leq \bar{p}(x(t)) \quad \text { a.e., } \quad 0 \leq t \leq {T}, 
	\end{equation*}
such that
$$
x(t)+\int_0^t a(t-s) z(s) d s=q(t), \quad 0 \leq t \leq T.
$$
\end{definition}
\begin{remark}
We point out to the reader that the assumption on $p,q$ and $a$ can be relaxed or modified as done in \cite{kiffe1978existence,kiffe19792}. Our exposition here is chosen for simplicity and the specific processes we encounter later.
\end{remark}
\subsection{Notations.} We employ the following notations for different modes of convergence:
\begin{itemize}
\item $\stackrel{p}{\rightarrow}$: Convergence in probability of random variables {or stochastic processes},
\item $\Rightarrow$: Weak convergence for probability measures or random variables,
\item $\stackrel{*}{\rightharpoonup}$: Weak-star convergence in general function spaces,
\item $\stackrel{\mathbb{D}}{\rightarrow}$: Convergence in the Skorokhod topology.
\end{itemize}

\section{Fluid limit for zero-buffer loss system}\label{sec:zero-buffer}
\subsection{Setup.}
In this section, we introduce the zero-buffer loss queuing model. We consider a sequence of queuing systems indexed by $n$, subject to the following assumptions.
\begin{assumption}\label{asm:queue0}
	Consider a $M_t / G / n / n$ loss queuing system; namely, a queuing system with
	\begin{enumerate}[label=\roman*.]
		\item a nonhomogeneous Poisson arrival process $A^n$ with rate or intensity function {$ {n}\lambda(\cdot)$}, where {$\lambda$ is} locally integrable;
		\item general customer service times sampled {independently} from a distribution $G$ with density $g$;
		\item the system has $n$ servers and zero buffer or waiting space. That is, when all $n$ servers are busy, incoming customer arrivals are lost. Equivalently, the customers can be thought to have $0$ patience. 
	\end{enumerate}
\end{assumption}

\begin{remark}
{Note that the intensity function corresponding to the arrival process $A^n$ could be extended to a more general $\la_n$ for all $n$, such that $\la_n/n\rightarrow\la$ under some topology. This generalization should be an easy extension and not considered in this article to keep considerations simpler.} %and is  left for future work.}
\end{remark}

\tikzset{every picture/.style={line width=0.75pt}} %set default line width to 0.75pt        
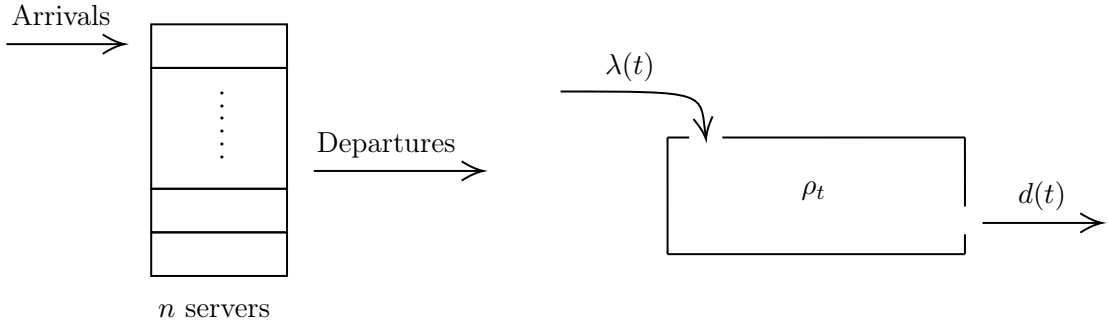
\begin{figure}[h!]
\centering
%\resizebox{0.5\textwidth}{!}
{%
\begin{tikzpicture}[x=0.75pt,y=0.75pt,yscale=-1,xscale=1]
%uncomment if require: \path (0,300); %set diagram left start at 0, and has height of 300

%Shape: Rectangle [id:dp9515592002569613] 
\draw   (110,62) -- (178.5,62) -- (178.5,84) -- (110,84) -- cycle ;
%Shape: Rectangle [id:dp13292932962710347] 
\draw   (110,84) -- (178.5,84) -- (178.5,145) -- (110,145) -- cycle ;
%Shape: Rectangle [id:dp060672491056630085] 
\draw   (110,167) -- (178.5,167) -- (178.5,189) -- (110,189) -- cycle ;
%Shape: Rectangle [id:dp09444221034020883] 
\draw   (110,145) -- (178.5,145) -- (178.5,167) -- (110,167) -- cycle ;
%Straight Lines [id:da7752320767250045] 
\draw    (37,72) -- (94.5,72) ;
\draw [shift={(96.5,72)}, rotate = 180] [color={rgb, 255:red, 0; green, 0; blue, 0 }  ][line width=0.75]    (10.93,-4.9) .. controls (6.95,-2.3) and (3.31,-0.67) .. (0,0) .. controls (3.31,0.67) and (6.95,2.3) .. (10.93,4.9)   ;
%Straight Lines [id:da9242225833034386] 
\draw    (370.43,178) -- (520.4,178) ;
%Straight Lines [id:da875252212529238] 
\draw    (520.41,153.92) -- (520.39,119.1) ;
%Straight Lines [id:da592554183366471] 
\draw    (370.43,178) -- (370.41,119.12) ;
%Straight Lines [id:da7007594082110381] 
\draw    (520.39,119.1) -- (398.06,119.12) ;
%Straight Lines [id:da9104885868303113] 
\draw    (520.4,167.98) -- (520.4,178) ;
%Straight Lines [id:da9428463234375922] 
\draw    (370.41,119.12) -- (381.31,119.12) ;
%Curve Lines [id:da9958139970539478] 
\draw    (316.5,95.9) .. controls (392.08,95.9) and (386.76,95.9) .. (389.57,118.19) ;
\draw [shift={(389.81,119.95)}, rotate = 262.02] [color={rgb, 255:red, 0; green, 0; blue, 0 }  ][line width=0.75]    (10.93,-4.9) .. controls (6.95,-2.3) and (3.31,-0.67) .. (0,0) .. controls (3.31,0.67) and (6.95,2.3) .. (10.93,4.9)   ;
%Straight Lines [id:da35929949559324736] 
\draw    (529.25,162.24) -- (587.5,162.24) ;
\draw [shift={(589.5,162.24)}, rotate = 180] [color={rgb, 255:red, 0; green, 0; blue, 0 }  ][line width=0.75]    (10.93,-4.9) .. controls (6.95,-2.3) and (3.31,-0.67) .. (0,0) .. controls (3.31,0.67) and (6.95,2.3) .. (10.93,4.9)   ;
%Straight Lines [id:da9141981884727592] 
\draw    (192,136) -- (275.5,136) ;
\draw [shift={(277.5,136)}, rotate = 180] [color={rgb, 255:red, 0; green, 0; blue, 0 }  ][line width=0.75]    (10.93,-4.9) .. controls (6.95,-2.3) and (3.31,-0.67) .. (0,0) .. controls (3.31,0.67) and (6.95,2.3) .. (10.93,4.9)   ;

% Text Node
\draw (150,93) node [anchor=north west][inner sep=0.75pt]  [rotate=-90]  {$\cdots \cdots $};
% Text Node
\draw (112,202) node [anchor=north west][inner sep=0.75pt]   [align=left] {$ n$ servers};
% Text Node
\draw (38,51) node [anchor=north west][inner sep=0.75pt]   [align=left] {Arrivals};
% Text Node
\draw (337.39,74.69) node [anchor=north west][inner sep=0.75pt]    {$\lambda ( t)$};
% Text Node
\draw (436.5,140) node [anchor=north west][inner sep=0.75pt]    {$\rho _{t}$};
% Text Node
\draw (545.67,139.72) node [anchor=north west][inner sep=0.75pt]    {$d( t)$};
% Text Node
\draw (192,115) node [anchor=north west][inner sep=0.75pt]   [align=left] {Departures};

\end{tikzpicture}
}
\caption{{Zero-buffer loss system and its fluid model}}
\end{figure}

\subsection{Fraction of Occupied Servers or Scaled Number in System.}
Consider the $M_t/G/n/n$ loss system as in Assumption~\ref{asm:queue0}. 
Let $T_i$ and $V_i$ represent the arrival and service times, respectively, of the $i$-th customer. 
Let $N_t^n$ denote the number of occupied servers, or equivalently the number of customers in the system at time $t$. Denote $\bar{N}^n_t:= \frac{N_t^n}{n}$ to be the fraction of occupied servers or the $n-$scaled number of customers in the system at time $t$. Also, let $\mathcal{F}_t^{n}$ be the filtration generated by $\{\bar{N}^n_s: s \in [0, t]\}$. 

For the sake of simplicity, we first assume that the system starts empty, that is, the number of customers in the system at time $0$ is zero. In the sequel, we will relax this assumption. 

Observe that the number of busy servers at time $t$ consists of all arrivals to the system such that all of the following conditions are met: 
\begin{enumerate}[label = (\roman*), wide, labelwidth=!, labelindent=0pt]
\item the customer arrival occurs {at or} prior to time $t$, 
\item the number of occupied servers upon the customer's arrival is less than $n$, and
\item the remaining service time of this customer at time $t$ is positive, that is, the customer is yet to depart the system. 
\end{enumerate} 
For the $i-$th customer arriving to the system, these conditions correspond to $\{T_i \leq t\}$, $\{N_{T_i-}^{n} < n\}$ or $\{\bar{N}_{T_i-}^{n} < 1\}$, and $\{V_i > t-T_i\}$ respectively. Consequently, the number of customers at time $t$ satisfies
\begin{equation}\label{eq:N^n}
	N_{t}^{n}=\sum_{i=1}^{\infty} \mathbb{1}_{\{T_{i} \leq t\} } \mathbb{1}_{\{N_{T_{i}-}^{n}<n\}} \mathbb{1}_{\{V_{i} >t-T_{i}\}} .
\end{equation}
On scaling \eqref{eq:N^n} by $n$, we obtain that the fraction of occupied servers satisfies
\begin{equation}\label{eq:rho^n1}
	\bar{N}_t^n=\frac{1}{n}\sum_{i=1}^{\infty}  \mathbb{1}_{\{T_{i} \leq t\} } \mathbb{1}_{\{\bar{N}_{T_{i}-}^{n}<1\}} \mathbb{1}_{\{V_{i} >t-T_{i}\}} .
\end{equation}
Crucially, observe that $N^n$ or $\bar{N}^n$ given by \eqref{eq:N^n}-\eqref{eq:rho^n1} are given by integral equations whose evolution depends, in general, on its history. As such, these processes are non-Markovian and in this work we provide a way of obtaining scaling limits of such processes arising out of loss queuing systems. To that effect, we work with the scaled process $\bar{N}^n$ and obtain a representation using random measures. Denote 
\beq\label{eq:W^n}
W_n(t,u,x)=\frac{1}{n} \sum_{i=1}^{\infty} \mathbb{1}_{\{\bar{N}_{u-}^{n}<1\}} \mathbb{1}_{\{x >t-u\}} \mathbb{1}_{\{u \leq t\} }.
\eeq
Then relation~\eqref{eq:rho^n1} can be represented as
\begin{equation}\label{eq:rho^n0}
\bar{N}_t^n=\int_0^t\int_\R W_n(t,u,x) \mathcal{M}^n (du,dx),
\end{equation}
where $\mathcal{M}^n$ is the counting measure associated with the marked point process of the arrival and service time pair $(T_i,V_i)$. Taking expectation, we have by Theorem~\ref{thm:campbell} that 
\begin{equation*}
	\E\left[\int_0^t\int_\R W_n(t,u,x) \mathcal{M}^n (du,dx)\right]=\int_0^t\int_\R W_n(t,u,x) n\lambda(u)g(x)dudx.
\end{equation*}
Denote $\mathcal{M}^n_*$ to be the compensated random measure:
\begin{equation}\label{eq:M-comp}
	\mathcal{M}^n_*=\mathcal{M}^n-\mathcal{M}^n_c,
\end{equation}
where $\mathcal{M}^n_c(du, dx):=\E\left[\mathcal{M}^n(du,dx)\right]=n\lambda(u)g(x)dudx$.

Having obtained an integral representation for the fraction of occupied servers in \eqref{eq:rho^n0}, our goal is to exploit this relation to obtain the limit of the stochastic process $\{ \bar{N}_t^n, t \geq 0 \}$ as $n$ goes to infinity. We begin with a result proving convergence along a subsequence.
\begin{proposition}\label{prop:weaklimits0}
	Let Assumption~\ref{asm:queue0} hold. Assume that the system starts empty, that is $\rho_0^n = 0$ for all $n$. Then 
\begin{enumerate}[label = (\roman*), wide, labelwidth=!, labelindent=0pt]
\item For any $T > 0$ and any subsequence, there exists a further subsequence $(r_k)$ and a continuous{, possibly stochastic} process $\rho$ such that almost surely, {
	\begin{equation}\label{eq:limitsub0}
\bar{N}^{r_k}\rightarrow\rho,
	\end{equation} 
in the uniform topology.}
\item {Moreover, given $(r_k)$, almost surely there exists a bounded, possibly stochastic process $w$ such that}
\begin{equation}\label{eq:rhoweak*w}
\mathbb{1}_{\{\bar{N}^{r_k}_{t-}<1\}}\stackrel{*}{\rightharpoonup}w(t) \quad \text{in } L^\infty[0,T].
\end{equation}
\item {Furthermore, almost surely, $\rho$ and $w$ defined in \eqref{eq:limitsub0}-\eqref{eq:rhoweak*w} satisfy}  
	\begin{align}\label{eq:rho-w}
		&\rho_{t}=\int_{0}^{t} w(u) \bar{G}(t-u) \lambda(u) d u, \quad t \in [0,T],\quad\text{and}\\%\label{eq:rhow}
&\mathbb{1}_{\left\{\rho_{u-}<1\right\}} \leq w(u) \leq 1,\,\text{a.e. in }[0,T].\nonumber
	\end{align}
That is, {for almost all $\omega\in\Omega$} $(\rho{(\om)}, w{(\om)})$ as in \eqref{eq:rho-w} is a solution, interpreted according to Definition~\ref{def:sol_Vol}, to the following non-linear discontinuous Volterra integral equation
	\begin{equation}\label{eq:rhoVol}
		\rho_{t}=\int_{0}^{t} \mathbb{1}_{\{\rho_{u-}< 1 \}} \bar{G}(t-u) \lambda(u) d u .
	\end{equation}

\end{enumerate}
\end{proposition}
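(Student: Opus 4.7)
My plan is to decompose $\bar N^n$ from \eqref{eq:rho^n0} into a predictable compensator plus a vanishing martingale remainder, extract subsequential limits via tightness, identify the weak-$*$ limit of the capacity indicators, and pass to the limit in the integral equation to verify the Volterra identity in the sense of Definition~\ref{def:sol_Vol}.

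\emph{Decomposition and vanishing noise.} First, split $\bar N^n_t = C^n_t + M^n_t$, where
\begin{equation*}
C^n_t := \int_0^t \mathbb{1}_{\{\bar N^n_{u-}<1\}}\bar G(t-u)\lambda(u)\,du
\end{equation*}
is obtained from Theorem~\ref{thm:campbell} using Fubini and the predictability of $\mathbb{1}_{\{\bar N^n_{u-}<1\}}$ with respect to $\mathcal F^n$, while $M^n_t$ is the stochastic integral of $W_n(t,\cdot,\cdot)$ against $\mathcal M^n_*$. Rewriting $n\bar N^n_t$ as accepted arrivals minus cumulative departures exhibits each piece as a counting process with predictable intensity bounded by $n\lambda(u)$, so Doob's $L^2$ maximal inequality applied to each compensated counting process yields $\mathbb{E}[\sup_{t\leq T}|M^n_t|^2]\leq C(T)/n$. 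A Borel-Cantelli/diagonal argument then extracts a subsequence $(r_k)$ along which $\sup_{t\leq T}|M^{r_k}_t|\to 0$ almost surely.

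\emph{Tightness and weak-$*$ extraction.} The compensator satisfies the deterministic modulus estimate
\begin{equation*}
|C^n_t-C^n_s|\leq \int_s^t \lambda(u)\,du + \int_0^s [\bar G(s-u) - \bar G(t-u)]\lambda(u)\,du,
\end{equation*}
continuous in $(s,t)$ uniformly in $n$. Combined with $0\leq \bar N^n_t\leq 1$, Theorem~\ref{thm:tightness} gives tightness of the laws of $\{\bar N^n\}$ in $\mathbb{D}[0,T]$, and after Skorokhod representation a further subsequence converges a.s.\ to a process which, because the martingale part vanishes uniformly and the compensator is continuous, is itself continuous; convergence thus upgrades to uniform. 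This produces $\rho$ and establishes (i). For (ii), the indicators $\mathbb{1}_{\{\bar N^{r_k}_{u-}<1\}}$ are bounded by $1$ in $L^\infty[0,T]$, so Banach-Alaoglu combined with separability of $L^1[0,T]$ extracts a further subsequence (still denoted $(r_k)$) and some $w\in L^\infty[0,T]$ with $0\leq w\leq 1$ for which \eqref{eq:rhoweak*w} holds.

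\emph{Passing to the limit and solution concept.} For each fixed $t$, the map $u\mapsto \bar G(t-u)\lambda(u)\mathbb{1}_{[0,t]}(u)$ lies in $L^1[0,T]$, so weak-$*$ convergence gives $C^{r_k}_t\to\int_0^t w(u)\bar G(t-u)\lambda(u)\,du$; combined with $\bar N^{r_k}_t\to\rho_t$ and $M^{r_k}_t\to 0$ one obtains the integral identity in \eqref{eq:rho-w}. The upper bound $w\leq 1$ is immediate. For the lower bound, on $\{u:\rho_u<1\}$ uniform convergence forces $\bar N^{r_k}_{u-}\to\rho_u<1$, hence $\mathbb{1}_{\{\bar N^{r_k}_{u-}<1\}}=1$ for all large $k$, and weak-$*$ identification gives $w(u)=1$ there; continuity of $\rho$ then yields $\mathbb{1}_{\{\rho_{u-}<1\}}=\mathbb{1}_{\{\rho_u<1\}}\leq w(u)$ a.e. Since $\underline p(\rho_{u-})=\mathbb{1}_{\{\rho_{u-}<1\}}$ and $\bar p(\rho_{u-})=1$ for $p(x)=\mathbb{1}_{\{x<1\}}$ (using $\rho\in[0,1]$) in the sense of \eqref{eq:g-bar}, Definition~\ref{def:sol_Vol} certifies $(\rho,w)$ as a solution of \eqref{eq:rhoVol}.

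\emph{Main obstacle.} The principal technical difficulty is the uniform-in-$t$ control of $M^n_t$: although $\bar N^n_t-C^n_t$ is mean-zero for each fixed $t$, it is \emph{not} a martingale in $t$ because $W_n(t,u,x)$ depends on both $t$ and $u$. The resolution is the arrivals-minus-departures splitting, which exposes two genuine counting-process martingale compensators amenable to Doob's maximal inequality. A secondary subtlety is that on the boundary set $\{\rho_t=1\}$ the indicators $\mathbb{1}_{\{\bar N^{r_k}_{t-}<1\}}$ need not converge pointwise; this is precisely why weak-$*$ convergence and the discontinuous Volterra solution notion of Definition~\ref{def:sol_Vol} are the natural language for describing the limit.
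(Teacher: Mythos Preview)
Your overall architecture matches the paper's: decompose $\bar N^n=C^n+M^n$, show the noise vanishes, extract a weak-$*$ limit $w$ of the indicators, pass to the limit in $C^n$, and verify the sandwich $\mathbb{1}_{\{\rho<1\}}\le w\le 1$. The weak-$*$ extraction, the identification of $\rho$, and the lower-bound argument for $w$ are all essentially as in the paper (the paper uses an explicit $\varepsilon$-sandwich $\mathbb{1}_{\{\rho\le 1-\varepsilon\}}\le \mathbb{1}_{\{\bar N^{r_k}\le 1-1/r_k\}}\le \mathbb{1}_{\{\rho<1+\varepsilon\}}$, but your pointwise argument on $\{\rho<1\}$ is equivalent).

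The gap is precisely in your ``resolution'' of the main obstacle. Writing $n\bar N^n_t=A^n_{\mathrm{acc}}(t)-D^n(t)$ and compensating each piece yields a genuine martingale remainder $\tilde M^n_t=\frac1n\big[(A^n_{\mathrm{acc}}-\mathrm{comp}_A)-(D^n-\mathrm{comp}_D)\big]$, and Doob does control $\sup_t|\tilde M^n_t|$. But $\tilde M^n\neq M^n$: the predictable (Doob--Meyer) compensator $\mathrm{comp}_D(t)$ of the departure process is built from hazard rates of customers currently in service, \emph{not} the quantity $\int_0^t \mathbb{1}_{\{\bar N^n_{u-}<1\}}G(t-u)\,n\lambda(u)\,du$ that you need to recover $C^n$. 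In other words, $M^n_t-\tilde M^n_t=\frac1n(\mathrm{comp}_A(t)-\mathrm{comp}_D(t))-C^n_t$ is a nontrivial term you have not addressed, so the claimed bound $\mathbb{E}[\sup_t|M^n_t|^2]\le C/n$ is unjustified. (Relatedly, the departure intensity is \emph{not} pointwise bounded by $n\lambda(u)$ for general $G$; only its integral is controlled by expected arrivals.)

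The paper sidesteps this entirely: it never tries to make $M^n$ a martingale. Instead it shows (a) $\mathbb{E}[(M^n_t)^2]\to 0$ for each fixed $t$ directly from Campbell's formula, giving finite-dimensional convergence to $0$; and (b) tightness of $(M^n)$ in $\mathbb{D}$ via the deterministic modulus of $C^n$ together with the piecewise-constant structure of $\bar N^n$. Theorem~\ref{thm:weaklyconv} then yields $M^n\Rightarrow 0$, hence $M^n\stackrel{p}{\to}0$ uniformly, and one extracts an a.s.\ subsequence. This tightness-plus-fdd route is the clean replacement for your Doob step and keeps everything on the original probability space (so you can also drop the Skorokhod representation, which would otherwise move you to a new space and conflict with the ``almost surely'' claim in the proposition).
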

\begin{proof}
For simplicity we will consider the initial subsequence to be $(n)$, but the arguments below go through for any initial subsequence.

\vspace{0.1in}
\noindent
\emph{Part (i).}
Applying the decomposition \eqref{eq:M-comp} in \eqref{eq:rho^n0} we have
\beq\label{eq:rhon}
\bar{N}_t^n = X_t^n + Y_t^n,
\eeq
where 
\beq\label{eq:X^n+ga^n}
X_t^n := \int_0^t\int_\R W_n(t,u,x) \mathcal{M}^n_* (du,dx), 
\text{ and }
Y_t^n := \int_{0}^{t} \mathbb{1}_{\{\bar{N}^n_{u-}<1\}} \bar{G}(t-u) \lambda(u) du.
\eeq
We will analyze $X^n$ and $Y^n$ separately, starting with the term $Y^n$. 

By the local integrability of \( \lambda \) from Assumption~\ref{asm:queue0}, we have from \eqref{eq:X^n+ga^n} that almost surely
	\begin{equation} \label{eq:Ybd}
\sup_n \sup_{t \in [0,T]} Y_t^n \leq \int_0^T \lambda(u) \, du < \infty.  
	\end{equation}  
 Meanwhile $Y^n$ satisfies
\begin{align}\label{eq:ga^n-diff}
Y_t^n - Y_s^n &= \int_0^t \mathbb{1}_{\{\bar{N}_{u-}^n < 1\}} \bar{G}(t-u) \lambda(u) du - \int_0^s \mathbb{1}_{\{\bar{N}_{u-}^n < 1\}} \bar{G}(s-u) \la(u) du\nonumber\\
&= \int_s^t \mathbb{1}_{\{ \bar{N}_{u-}^n < 1 \}} \bar{G}(t-u) \la(u) du + \int_0^s \mathbb{1}_{\{\bar{N}_{u-}^n < 1\}} \lp \bar{G}(t-u) - \bar{G}(s-u) \rp \la(u) du.
\end{align}
Given that $\bar{G}$ is non-increasing and bounded above by $1$, we can derive from \eqref{eq:ga^n-diff} that
$$
\sup_n\left| Y_t^n - Y_s^n \right| \leq \int_s^t \la(u) du.
$$
Since the function $\Lambda(t)=\int_0^t\la(u)du$ is uniformly continuous on $[0,T]$, it follows that $Y^{n}$ are {equicontinuous}. %uniformly continuous uniformly in $n$. 
Therefore we have
\beq\label{eq:w'ga}  
\lim_{\delta \downarrow 0} \sup_n w'_{Y^{n}}(\delta) = 0.
\eeq  
By \eqref{eq:Ybd}, \eqref{eq:w'ga}, Theorem~\ref{thm:tightness} and Prokhorov's theorem we can conclude that there exists $\rho \in \mathbb{D}$ and a subsequence $(n_k)$ such that 
\begin{equation}\label{eq:Ytorho}
    Y^{n_k}\stackrel{\mathbb{D}}{\rightarrow}\rho, \quad \text{almost surely.}
\end{equation}
Moreover, $L^1[0, T]$ is a separable Banach space with dual $L^{\infty}[0, T]$ and $\mathbb{1}_{\{\bar{N}^n_{u-}<1\}}\in L^{\infty}[0,T]$. Therefore by \cite[Thm 2.34]{bressan2012lecture}, {almost surely} there is a subsubsequence $(l_k)\subset(n_k)$ and $w \in L^\infty[0,T]$, {possibly depending on $(l_k)$,} such that for any $\phi \in L^1[0,T]$ %almost surely
\begin{equation}\label{eq:rho0weakstar}
\lim _{k \rightarrow \infty} \int_0^t \phi(u) \mathbb{1}_{\{\bar{N}^{l_k}_{u-} < 1\}} d u=\int_0^t \phi(u) w(u) d u, \quad \text{for all } t\in[0,T].
\end{equation}
{Note that $w$ could still be random at this stage.} In particular, choosing $\phi(\cdot) = \bar{G}(t - \cdot) \la(\cdot)$ we have for all $t\in[0,T]$, {almost surely} 
\begin{equation}\label{eq:idrho}
\lim _{k \rightarrow \infty} \int_0^t \mathbb{1}_{\{\bar{N}^{l_k}_{u-}< 1\}} \bar{G}(t-u) \lambda(u) du=\int_0^t w(u) \bar{G}(t-u) \lambda(u) d u.
\end{equation}
From \eqref{eq:idrho} we identify $\rho$ in \eqref{eq:Ytorho}, that is:
\begin{equation}\label{eq:rho=int}
    \rho_t=\int_0^t w(u) \bar{G}(t-u) \lambda(u) d u.
\end{equation}
This limiting function $\rho$ is continuous because {$\bar{G}$ and $w$ are bounded,} {and $\la$ is integrable}. It follows that the convergence in \eqref{eq:Ytorho} is also under the uniform topology:
\begin{equation}\label{eq:YtorhoU}
    \lim_{k\rightarrow\infty} \sup_{t\in[0,T]}\left|Y^{l_k}_t-\rho_t\right|=0,\quad \text{almost surely.}
\end{equation}

Let us now analyze the term $X^n$. By \eqref{eq:rhon}-\eqref{eq:Ybd} we have that almost surely
	\begin{equation} \label{eq:Xbd}
	\sup_n \sup_{t \in [0,T]} \left| X_t^{n} \right| 
\leq
\sup_n \sup_{t \in [0,T]} \bar{N}_t^n + \sup_n \sup_{t \in [0,T]} Y_t^n 
\leq
1 + \int_0^T \lambda(u) \, du < \infty.  
	\end{equation}  
	Furthermore, the number of jumps of \( \bar{N}^{n} \) is bounded by twice that of the arrivals. Consequently \( \bar{N}^{n} \) is piecewise constant with almost surely finitely many jumps in \( [0,T] \). Thus we have for all $n$,  %and remains constant between them, we have
	\begin{equation}\label{eq:w'rho}
			w'_{\bar{N}^n}(\delta) = 0,
	\end{equation} 
	almost surely.
Using \eqref{eq:w'rho} and \eqref{eq:w'ga} in relation \eqref{eq:rhon} we get  
\begin{equation} \label{eq:Xmdc0}
\lim_{\delta \downarrow 0} \sup_n w'_{X^{n}}(\delta) = 0.  
\end{equation}  
By \eqref{eq:Xbd}, \eqref{eq:Xmdc0} and Theorem~\ref{thm:tightness} we obtain the tightness of $(X^{n})_{n\geq1}$. Now recalling $W_n$ in \eqref{eq:W^n} and $X^n$ in \eqref{eq:X^n+ga^n}, we have for any fixed $t \in [0,T]$
\begin{align*}
\E\left(X^{n}_t\right)^2&=\E\left[\int_0^t\int_\R W_n(t,u,x) \mathcal{M}^n_* (du,dx)\right]^2\\	&\leq \frac{1}{n^2}\E\left[\int_0^t\int_\R \mathcal{M}^n_* (du,dx)\right]^2=\frac{1}{n^2}\textnormal{Var}\left(A^n_t\right)=\frac{1}{n}\int_0^t\lambda(u)du\rightarrow 0,
\end{align*}
as $n \to \infty$. 
Consequently for each $t \in[0, T], X_t^n \xrightarrow{p} 0$. Thus for any $\left(t_1, t_2, \ldots, t_d\right) \in[0, T]^d$, the finite dimensional vectors $\left(X_{t_1}^n, \ldots, X_{t_d}^n\right) \xrightarrow{p}(0, \ldots, 0)$ as a consequence of the Cramer-Wold device \cite[Thm 29.4]{billingsley2017probability}. By Theorem~\ref{thm:weaklyconv} we have that $X^{n}$ converges in distribution to the constant zero function. Since the limiting function is non-random, the convergence becomes:
\begin{equation}\label{eq:Xucp}
X^{n}\xrightarrow{p} 0, \quad \text{in the uniform topology.}
\end{equation}
From \eqref{eq:Xucp} we know that there exists a subsequence $(r_k) \subset(l_k)$, such that
\begin{equation}\label{eq:Xto0U}
    \sup_{t \in [0,T]} \left| X_t^{r_k}\right| \rightarrow 0,\quad \text{almost surely.} 
\end{equation}

Combining the above arguments together, for this sequence $(r_k)$, we thus obtain from \eqref{eq:rhon}, \eqref{eq:YtorhoU} and \eqref{eq:Xto0U} that almost surely
\begin{equation}\label{eq:rho}
\bar{N}^{r_k}=X^{r_k}+Y^{r_k}\rightarrow\rho,
	\end{equation}
in the uniform topology, where $\rho$ is identified by \eqref{eq:rho=int}. This completes the proof of \emph{Part (i)}.

\vspace{0.1in}
\noindent
\emph{Part (ii).} This has already been shown above in \eqref{eq:rho0weakstar}.

\vspace{0.1in}
\noindent
\emph{Part (iii).}
The function $\rho$ has been identified by \eqref{eq:rho=int}. It remains to show constraint for the function $w$. We now observe that for sequence $(\bar{N}^n)$ the set $\left\{\bar{N}_{u-}^n<1\right\}$ is identical to the set $\left\{\bar{N}_{u-}^n \leq 1-\frac{1}{n}\right\}$. This is because $\bar{N}^n$ only takes values in $\left\{\frac{i}{n}: i=1, \ldots, n\right\}$. Therefore, we can rewrite \eqref{eq:rhon} as
	\begin{equation*}
		\bar{N}^{n}_t=X_t^{n}+\int_{0}^{t} \mathbb{1}_{\{\bar{N}^n_{u-}\leq 1-1/n\}} \bar{G}(t-u) \lambda(u) d u.
	\end{equation*}
Notice that by Proposition~\ref{prop:weaklimits0}, $\rho\leq1$. Our next objective is to discover the function $w$ in \eqref{eq:rho-w}. Since $\rho$ is continuous, fix $\varepsilon>0$ and choose $N$ large enough such that for all $k>N$ we have $r_k>\frac{3}{\varepsilon}$, and $\left\|\bar{N}^{r_k}-\rho\right\|_T<\frac{\varepsilon}{3}$ {almost surely}. Then it is readily checked that
	\begin{equation*}%\label{eq:leqchain0}
		\mathbb{1}_{\{\rho_{u-}\leq 1-\varepsilon\}} \leq \mathbb{1}_{\{\bar{N}^{r_k}_{u-}\leq 1-1/r_k\}} \leq \mathbb{1}_{\{\rho_{u-}<1+\varepsilon\}}.
	\end{equation*} 
	Therefore for any $\phi \geq 0$ such that $\phi\in L^1[0, T]$ we have {almost surely}
	\begin{equation*}
		\int_0^t \phi(u) \mathbb{1}_{\{\rho_{u-}\leq 1-\varepsilon\}} d u \leq \int_0^t \phi(u) \mathbb{1}_{\{\bar{N}^{r_k}_{u-}\leq 1-1/r_k\}} d u \leq \int_0^t \phi(u) \mathbb{1}_{\left\{\rho_{u-}<1+\varepsilon\right\}} d u.
	\end{equation*}
	Note that $\lim _{\varepsilon \downarrow 0} \mathbb{1}_{\{\rho_{u-}<1-\varepsilon\}}=\mathbb{1}_{\{\rho_{u-}<1\}}$ and $\lim _{\varepsilon \downarrow 0} \mathbb{1}_{\{\rho_{u-}<1+\varepsilon\}}=\mathbb{1}_{\{\rho_{u-}\leq 1\}}=1$. Consequently taking $k \rightarrow \infty$ and then $\varepsilon \downarrow 0$ we have by the dominated convergence theorem and \eqref{eq:rhoweak*w} that {almost surely}:
	\begin{equation*}
		\int_0^t \phi(u) \mathbb{1}_{\left\{\rho_{u-}<1\right\}} d u \leq \int_0^t w(u) \phi(u) d u \leq \int_0^t \phi(u)  d u.
	\end{equation*}
	Since $\phi$ is arbitrary in $L^1[0,T]$, we have {almost surely}
	\begin{equation*}
		\mathbb{1}_{\left\{\rho_{u-}<1\right\}} \leq w(u) \leq 1,\,\text{a.e. in }[0,T].
	\end{equation*}
Recall the notations defined in \eqref{eq:g-bar}. It is easily checked that $\underline{\mathbb{1}}_{\{\rho_{u-}<1\}}=\mathbb{1}_{\{\rho_{u-}<1\}}$ and $\bar{\mathbb{1}}_{\{\rho_{u-}<1\}}=\mathbb{1}_{\{\rho_{u-}\leq1\}}=1$. Therefore, by \eqref{eq:rho-w} and Definition~\ref{def:sol_Vol} we conclude that $(\rho,w)$ is a solution to the discontinuous Volterra equation \eqref{eq:rhoVol}.

\end{proof}

We have established a fluid limit for $\bar{N}_t^n$ {along a subsequence} when the system starts empty. Now, we extend our considerations to a more general case.

\begin{assumption}\label{asm:initial0}
	Let the conditions under Assumption~\ref{asm:queue0} hold. In addition let the number of customers in the system at time $0$: $N_0^n$, satisfy
	
\begin{equation*}
		\lim _{n \rightarrow \infty}\dfrac{N_0^n}{n} = \rho_0, \quad \text{ almost surely, }
\end{equation*}
	where $\rho_0 \in{[}0,1]$. Moreover, assume that the remaining service times of each of the initially occupied servers follow the distribution $F^n$ satisfying
	\begin{equation*}
		\lim_{n \rightarrow \infty} \sup_{t}\left|F^n(t)-F(t)\right|=0,
	\end{equation*}
	for some limiting distribution $F$.
\end{assumption}

\begin{proposition}\label{thm:fluidlimitini0}
	Let Assumption~\ref{asm:initial0} hold. Then 
\begin{enumerate}[label = (\roman*), wide, labelwidth=!, labelindent=0pt]
\item For any $T > 0$ and any subsequence of $\bar{N}^n$, there exists a further subsequence $\bar{N}^{r_k}$ and a real-valued continuous, {possibly stochastic} process $\rho$ such that almost surely,{
	\begin{equation}\label{eq:limitsub0rho0}
\bar{N}^{r_k}\rightarrow\rho,
	\end{equation} 
in the uniform topology.}
\item {Moreover, given $(r_k)$, almost surely there exists a bounded, possibly stochastic process $w$ such that }
\begin{equation}\label{eq:rhoweak*wrho0}
\mathbb{1}_{\{\bar{N}^{r_k}_{t-}<1\}}\stackrel{*}{\rightharpoonup}w(t) \quad \text{in } L^\infty[0,T].
\end{equation}
\item {Furthermore, almost surely, $\rho$ and $w$ defined in \eqref{eq:limitsub0rho0}-\eqref{eq:rhoweak*wrho0} satisfy}
	\begin{align}\label{eq:rho-wrho0}
		&\rho_{t}=\rho_0\bar{F}(t)+\int_{0}^{t} w(u) \bar{G}(t-u) \lambda(u) d u, \quad t \in [0,T],\quad\text{and}\\%\label{eq:rhow}
&\mathbb{1}_{\left\{\rho_{u-}<1\right\}} \leq w(u) \leq 1,\quad \text{a.e. in }[0,T].\nonumber
	\end{align}
That is, {for almost all $\om\in\Omega$} $(\rho{(\om)}, w{(\om)})$ as in \eqref{eq:rho-wrho0} is a solution, interpreted according to Definition~\ref{def:sol_Vol}, to the following non-linear discontinuous Volterra integral equation
	\begin{equation}\label{eq:rhoVolini}
		\rho_{t}=\rho_0\bar{F}(t)+\int_{0}^{t} \mathbb{1}_{\{\rho_{u-}< 1 \}} \bar{G}(t-u) \lambda(u) d u .
	\end{equation}
\end{enumerate}
\end{proposition}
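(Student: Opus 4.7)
The plan is to mirror the proof of Proposition~\ref{prop:weaklimits0}, incorporating an additional term for the $N_0^n$ customers present at time $0$. Let $V_1^0, \ldots, V_{N_0^n}^0$ denote their remaining service times, i.i.d.\ from $F^n$. The analogue of \eqref{eq:N^n} is
\[
N_t^n = \sum_{j=1}^{N_0^n} \mathbb{1}_{\{V_j^0 > t\}} + \sum_{i=1}^{\infty} \mathbb{1}_{\{T_i \leq t\}} \mathbb{1}_{\{\bar{N}_{T_i-}^n < 1\}} \mathbb{1}_{\{V_i > t-T_i\}},
\]
and after dividing by $n$, I would decompose $\bar{N}_t^n = I_t^n + X_t^n + Y_t^n$, where
\[
I_t^n := \frac{1}{n}\sum_{j=1}^{N_0^n} \mathbb{1}_{\{V_j^0 > t\}},
\]
and $X_t^n, Y_t^n$ are defined exactly as in \eqref{eq:X^n+ga^n}. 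Given any initial subsequence, the task is to extract a further subsequence along which $\bar{N}^{r_k}$ converges uniformly to a continuous limit $\rho$ satisfying the target equation.

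The first key step is to show that almost surely $\sup_{t\in[0,T]}|I_t^n - \rho_0\bar{F}(t)| \to 0$. Writing $I_t^n = (N_0^n/n)\hat{F}_n(t)$, where $\hat{F}_n$ is the empirical survival function of $V_1^0,\ldots,V_{N_0^n}^0$, this combines the assumption $N_0^n/n \to \rho_0$ a.s., a triangular-array Glivenko-Cantelli argument giving $\sup_t |\hat{F}_n(t) - \bar{F}^n(t)| \to 0$ a.s.\ (uniformity in $t$ follows because $\hat{F}_n$ and $\bar{F}^n$ are monotone), and the assumed uniform convergence $\bar{F}^n \to \bar{F}$.

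In the second step, the analyses of $X^n$ and $Y^n$ carry over nearly verbatim from Proposition~\ref{prop:weaklimits0} since neither the arrival process nor the service distribution of subsequent customers is affected by the initial condition. The martingale variance bound still gives $X^n \xrightarrow{p} 0$ uniformly on $[0,T]$, yielding an a.s.-convergent subsequence $X^{r_k} \to 0$. The same uniform bound and equicontinuity estimates give tightness of $Y^n$, and extracting a further subsequence along which $\mathbb{1}_{\{\bar{N}^{r_k}_{u-}<1\}} \stackrel{*}{\rightharpoonup} w$ in $L^\infty[0,T]$ yields $Y^{r_k}_t \to \int_0^t w(u)\bar{G}(t-u)\la(u)\,du$ uniformly on $[0,T]$. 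Combined with the convergence of $I^n$, this identifies the limit as in \eqref{eq:rho-wrho0}.

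Finally, the constraint $\mathbb{1}_{\{\rho_{u-}<1\}} \leq w(u) \leq 1$ is obtained by the same discretization–sandwich argument as in \emph{Part~(iii)} of Proposition~\ref{prop:weaklimits0}, and its reinterpretation through Definition~\ref{def:sol_Vol} via $\underline{\mathbb{1}}_{\{\rho_{u-}<1\}} = \mathbb{1}_{\{\rho_{u-}<1\}}$ and $\bar{\mathbb{1}}_{\{\rho_{u-}<1\}} = 1$ is unchanged. The main obstacle I anticipate is the uniform convergence $I^n \to \rho_0\bar{F}$: one needs a Glivenko-Cantelli-type result for samples drawn from the triangular array $F^n$ rather than a fixed distribution, which is precisely enabled by $\|F^n - F\|_\infty \to 0$ combined with a standard empirical-process bound for i.i.d.\ sampling from $F^n$ at each stage.
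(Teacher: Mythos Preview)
Your proposal is correct and follows essentially the same approach as the paper: decompose $\bar N^n$ into the initial-customer term plus the $X^n+Y^n$ terms already handled in Proposition~\ref{prop:weaklimits0}, show the initial term converges uniformly to $\rho_0\bar F(t)$ via a Glivenko--Cantelli argument combined with $N_0^n/n\to\rho_0$ and $\|F^n-F\|_\infty\to 0$, and then import the remaining analysis verbatim. You are in fact slightly more careful than the paper in flagging the triangular-array nature of the Glivenko--Cantelli step (the paper simply invokes ``Glivenko--Cantelli'' without comment), but otherwise the arguments coincide.
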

\begin{proof}
At time $0$, the number of customers in service is $N_0^n$. Let the remaining service times for the customers in service be $\left(V_i^0\right)_{1 \leq i \leq N_0^n}$. Then, similar to \eqref{eq:rho^n0} we have:
\begin{equation}\label{eq:rhon-init}
	\bar{N}_t^n=\frac{1}{n}\sum_{i=1}^{N_0^n}\mathbb{1}_{\{V_i^0>t\}}+\int_0^t\int_\R W_n(t,u,x) \mathcal{M}^n (du,dx).
\end{equation}	
Observe that 
\begin{equation}\label{eq:initial-decomp}
	\frac{1}{n}\sum_{i=1}^{N_0^n}\mathbb{1}_{\{V_i^0>t\}}=\frac{N_0^n}{n} \frac{1}{N_0^n}\sum_{i=1}^{N_0^n}{\mathbb{1}_{\{V_i^0>t\}}}.
\end{equation}
By Assumption~\ref{asm:initial0}, thanks to Glivenko-Cantelli theorem
\begin{equation*}%\label{eq:inirho0}
	\lim_{n \rightarrow \infty} \sup_t \left|\frac{1}{N_0^n}\sum_{i=1}^{N_0^n}\mathbb{1}_{\{V_i^0>t\}}- \bar{F}(t)\right|=0, \quad \text{almost surely.}
\end{equation*}
Therefore from the decomposition \eqref{eq:initial-decomp} we have
\begin{equation}\label{eq:iniD0}
	\lim_{n \rightarrow \infty} \sup_t \left|\frac{1}{n}\sum_{i=1}^{N_0^n}\mathbb{1}_{\{V_i^0 > t\}}- \rho_0 \bar{F}(t)\right|=0, \quad \text{almost surely}.
\end{equation}
Since we already analyzed the second term in \eqref{eq:rhon-init} involving integration with respect to $\mathcal{M}^n$ in Proposition~\ref{prop:weaklimits0}, we obtain our desired result from \eqref{eq:iniD0}.
\end{proof}

Now, we establish the existence of a unique $\rho$ that satisfies \eqref{eq:rhoVolini} in the sense of Definition~\ref{def:sol_Vol}. Consequently, we obtain a unique fluid limit of the fraction of occupied servers $\bar{N}_t^n$.

\begin{theorem}\label{thm:solVol}
Let Assumption~\ref{asm:initial0} hold. Then there exists a unique solution $\rho$ to the discontinuous Volterra integral equation \eqref{eq:rhoVolini}. That is, there exists a unique solution $\rho$ such that for all $t\in [0,T]$
\begin{equation}\label{eq:existlemrhou}%\label{eq:solVol}
\rho_{t}=\rho_0\bar{F}(t)+\int_{0}^{t} z(u) \bar{G}(t-u) \lambda(u) du, \quad \text{such that } \,0\leq\rho_t\leq 1, %\quad w(t) \in L_{\text{loc}}^\infty[0,T]
\end{equation}
{for some $z(t)$ that} satisfies
\begin{equation}\label{eq:wtbdu}
\mathbb{1}_{\{\rho_{t}<1\}} \leq z(t) \leq 1 \quad \,\text{a.e. in }[0,T].
\end{equation}
\end{theorem}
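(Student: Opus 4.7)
The plan is to read off existence directly from Proposition~\ref{thm:fluidlimitini0}. That result produces, for almost every $\omega$, a pair $(\rho(\omega), w(\omega))$ that is bounded, satisfies the integral identity \eqref{eq:rho-wrho0}, and satisfies the pointwise bound $\mathbb{1}_{\{\rho_{u-}<1\}} \leq w(u) \leq 1$ a.e. -- which is exactly the content of Definition~\ref{def:sol_Vol} applied to \eqref{eq:rhoVolini}. Since the data $\rho_0, F, G, \lambda$ and the equation itself are deterministic, fixing any such $\omega$ yields a deterministic pair $(\rho, z) := (\rho(\omega), w(\omega))$ satisfying \eqref{eq:existlemrhou}--\eqref{eq:wtbdu}.

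\textbf{Uniqueness, setup.} For two solutions $(\rho^{(1)}, z^{(1)})$ and $(\rho^{(2)}, z^{(2)})$, I would first verify that any solution $\rho$ is continuous at every point where $\bar{F}$ is continuous: since $z$ is bounded and $\bar{G}(t-u)\lambda(u)$ is jointly integrable on $[0,T]^2$, dominated convergence gives continuity of $t \mapsto \int_0^t z(u)\bar{G}(t-u)\lambda(u)du$, and the initial term $\rho_0\bar{F}(t)$ inherits the regularity of $\bar{F}$. Set $\tau := \inf\{t \geq 0 : \rho^{(1)}_t \vee \rho^{(2)}_t = 1\}$. On $[0,\tau)$ the constraint forces $z^{(1)} = z^{(2)} = 1$ a.e., so both solutions coincide on $[0,\tau]$ with the explicit \emph{free} trajectory
\begin{equation*}
h(t) := \rho_0 \bar{F}(t) + \int_0^t \bar{G}(t-u)\lambda(u)\,du.
\end{equation*}
If $h$ never reaches $1$ on $[0,T]$, uniqueness is immediate with $\rho \equiv h$.

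\textbf{Uniqueness past the barrier.} The key structural identity is the monotonicity pairing
\begin{equation*}
\bigl(z^{(1)}(u) - z^{(2)}(u)\bigr)\bigl(\rho^{(1)}_{u-} - \rho^{(2)}_{u-}\bigr) \leq 0 \quad \text{a.e.},
\end{equation*}
which is immediate from $\mathbb{1}_{\{\rho^{(i)}_{u-}<1\}} \leq z^{(i)}(u) \leq 1$: whenever $\rho^{(1)}_{u-} > \rho^{(2)}_{u-}$, necessarily $\rho^{(2)}_{u-} < 1$, forcing $z^{(2)}(u) = 1 \geq z^{(1)}(u)$, and symmetrically. I would then reformulate the equation as a Volterra--Skorokhod problem by writing $\rho_t = h(t) - \int_0^t \bar{G}(t-u)\,\mu(du)$ with non-negative control measure $\mu(du) := (1 - z(u))\lambda(u)du$, supported on $\{u : \rho_{u-} = 1\}$, and seeking the minimal such $\mu$ that enforces $\rho_t \leq 1$. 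Uniqueness of this Skorokhod object follows from a contradiction argument: the monotonicity pairing forces the signed measure $\mu^{(1)} - \mu^{(2)}$ to oppose the sign of $\rho^{(1)} - \rho^{(2)}$ precisely on the saturation set, and the positivity of the kernel $\bar{G}$ propagates this cancellation to give $\rho^{(1)} \equiv \rho^{(2)}$; injectivity of the Volterra convolution against the positive kernel $\bar{G}(t-u)\lambda(u)$ then yields $z^{(1)} = z^{(2)}$ $\lambda$-a.e.

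\textbf{Main obstacle.} The main difficulty lies in this last step: because $\bar{G}(t-u)$ is a convolution kernel, discrepancies in $z$ on the saturation set propagate non-locally into $\rho$, so the pointwise monotonicity pairing does not immediately close into a Gronwall inequality -- a naive Gronwall attempt fails precisely because the coefficient $\mathbb{1}_{\{\rho_{u-}<1\}}$ is discontinuous. The Volterra--Skorokhod reformulation above, in the spirit of the discontinuous-coefficient framework of \cite{kiffe1979discontinuous} on which Definition~\ref{def:sol_Vol} rests, is what makes the comparison rigorous. An alternative route, which I would also keep in reserve, is to differentiate the equation (using the densities $f, g$ provided by Assumption~\ref{asm:queue0}) and run a squared-difference Gronwall argument on $y_t = \rho^{(1)}_t - \rho^{(2)}_t$, in which the monotonicity pairing cancels the boundary term $y_t(z^{(1)}(t)-z^{(2)}(t))\lambda(t)$ and leaves only a Volterra-convolution remainder that is amenable to a Gronwall bound against $g$ and $\lambda$.
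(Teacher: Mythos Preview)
Your existence argument is correct and matches the paper's.

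For uniqueness, your approach is genuinely different from the paper's and, as written, has a gap you yourself flag. The monotonicity pairing $(z^{(1)}-z^{(2)})(\rho^{(1)}_{u-}-\rho^{(2)}_{u-})\leq 0$ is valid, and the Volterra--Skorokhod reformulation is a reasonable viewpoint, but neither route is closed: you do not verify that $\bar{G}$ is a kernel of positive type (which is what a monotone-operator argument would need), and the squared-difference Gronwall fails for exactly the reason you state --- you cannot bound $|z^{(1)}-z^{(2)}|$ by $|\rho^{(1)}-\rho^{(2)}|$, so the convolution remainder is not controlled. The proof stops at the point you call the ``main obstacle.''

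The paper sidesteps this with a concrete local argument rather than a global comparison. It partitions time via $\tau_i=\inf\{t\geq\sigma_{i-1}:\rho_t=1\}$, $\sigma_i=\inf\{t\geq\tau_i:\rho_t<1\}$ and argues that two solutions can first separate only at some $\sigma_{i_0}$, say $\sigma_{i_0}^1<\sigma_{i_0}^2$. On $[0,\sigma_{i_0}^1]$, where $\rho^1=\rho^2$, the identity $\int_0^t(z_1-z_2)(u)\bar{G}(t-u)\lambda(u)\,du=0$ is differentiated in $t$ (using the density $g$) to yield the \emph{linear} Volterra equation $(z_1(t)-z_2(t))\lambda(t)=\int_0^t(z_1-z_2)(u)\lambda(u)g(t-u)\,du$, whose only $L^1$ solution is zero; hence $(z_1-z_2)\lambda=0$ a.e.\ on $[0,\sigma_{i_0}^1]$. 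Then on $(\sigma_{i_0}^1,\sigma_{i_0}^1+\delta)$ one has $\rho^1<1=\rho^2$, which forces $\int_{\sigma_{i_0}^1}^t(z_1-z_2)(u)\bar{G}(t-u)\lambda(u)\,du<0$, yet the constraint gives $z_1=1\geq z_2$ there --- a contradiction. The device you are missing is precisely this reduction to the homogeneous linear equation $x=g*x$ on the agreement interval; that is what replaces Gronwall and neutralizes the non-local propagation you identified.
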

\begin{proof}
The existence of the solution directly follows from Proposition~\ref{thm:fluidlimitini0}. In order to prove uniqueness, let us define 
\begin{equation}\label{eq:tausigma}
\sigma_0=0, \,\tau_i=\inf_{t\geq\sigma_{i-1}}\{t:\rho_t=1\}\text{ and }\sigma_i=\inf_{t\geq\tau_{i}}\{t:\rho_t<1\}.
\end{equation}
We first show that there are at most countably many $\tau_i,\sigma_i$. Denote $\mathcal{I}$ the index set of $\tau_i,\sigma_i$. Since $\rho_t$ is continuous, by definition we know that $\tau_{i+1}>\sigma_i$. That is $\rho_t<1$ for $t\in(\sigma_i,\tau_{i+1})$. Additionally, $\{(\sigma_i,\tau_{i+1})\}_{i\in\mathcal{I}}$ are pairwise disjoint open intervals on $\R$. Since each nonempty open interval in $\R$ contains a rational, we can construct an injection $\mathcal{I} \rightarrow \mathbb{Q}$ to conclude $\mathcal{I}$ is a countable set.

We will prove uniqueness by contradiction. Suppose there exist two solutions $(\rho^1_t,z_1(t))$ and $(\rho^2_t,z_2(t))$ satisfying \eqref{eq:existlemrhou} such that $\rho^1_t\neq\rho^2_t$ for some $t \in [0,T]$. Denote 
$$\sigma_0^1=0, \,\tau_i^1=\inf_{t\geq\sigma_{i-1}^1}\{t:\rho^1_t=1\}\text{ and }\sigma_i^1=\inf_{t\geq\tau_{i}^1}\{t:\rho^1_t<1\},$$
and similarly $\tau_i^2,\sigma_i^2$ for $\rho^2$, respectively. Since by \eqref{eq:wtbdu} we have for $t\in\{s:\rho_s<1\}$,  $z(t)=1$ is the only choice, we can conclude that the first time $\rho^1_t$ differs from $\rho^2_t$ can only be one of those $\sigma_i^1,\sigma_i^2$. 
Define 
$$i_0=\min\{i\in\mathcal{I} \mid \sigma_i^1 \neq \sigma_i^2\}.$$
Since the index set $\mathcal{I}$ is countable, and $\N$ is well‐ordered, the above term is well defined. Without loss of generality we can assume $\sigma_{i_0}^1<\sigma_{i_0}^2$. Then, for $t \in [0,\sigma_{i_0}^1]$, we have
\begin{align*}
\rho^1_t&=\rho_0\bar{F}(t)+\int_{0}^{t} z_1(u) \bar{G}(t-u) \lambda(u) d u\\
&=\rho_0\bar{F}(t)+\int_{0}^{t} z_2(u) \bar{G}(t-u) \lambda(u) d u=\rho^2_t.
\end{align*}
Consequently for $t \in [0,\sigma_{i_0}^1]$
\begin{equation}\label{eq:intw1-w2G}
\int_0^t\left(z_1(u)-z_2(u)\right)\bar{G}(t-u) \lambda(u) d u=0.
\end{equation}
Notice that
$$\frac{\partial}{\partial t} \left(z_1(u)-z_2(u)\right)\lambda(u)\bar{G}(t-u)=-\left(z_1(u)-z_2(u)\right)\lambda(u)g(t-u).$$
Since $z_1,z_2$ are bounded and $\lambda,g \in L^1[0,T]$, by Young's convolution inequality the function $(u,t)\mapsto(z_1(u)-z_2(u))\la(u)g(t-u)\in L^1([0,T]\times[0,T])$. Therefore, we can apply \cite[Thm 2.7]{vsremr2012differentiation} to take derivatives of both side of \eqref{eq:intw1-w2G} to obtain
\begin{equation}\label{eq:diffw1-w2}
\left(z_1(t)-z_2(t)\right)\lambda(t)-\int_0^t\left(z_1(u)-z_2(u)\right)\lambda(u)g(t-u) d u=0, \quad \text{a.e. in }[0,T].
\end{equation}
Now observe that the only solution in $L^{1}[0,T]$ to the Volterra integral equation
\begin{equation*}%\label{eq:VIE0}
x(t)=\int_0^tx(u)g(t-u)du
\end{equation*}
is $x(t) \equiv 0$ (see for example \cite[Thm 1.2.8]{brunner2017volterra}). 
Therefore, from \eqref{eq:diffw1-w2} we have for $t \in [0,\sigma_{i_0}^1]$
\begin{equation}\label{eq:w1-w2lam}
\left(z_1(t)-z_2(t)\right)\lambda(t) =0, \quad \text{a.e. in }[0,T].
\end{equation}
Recall that $\sigma_{i_0}^1<\sigma_{i_0}^2$. This implies by continuity of $\rho^1 $ that there exists $\delta$ with $0<\delta<\sigma_{i_0}^2-\sigma_{i_0}^1$ such that
\begin{equation}\label{eq:rho1<rho2}
\rho^1_t<\rho^2_t=1, \quad \text{for } t \in (\sigma_{i_0}^1,\sigma_{i_0}^1+\delta).
\end{equation}
Therefore, from \eqref{eq:existlemrhou} we have 
\begin{equation*}
\rho_0\bar{F}(t)+\int_{0}^{t} z_1(u) \lambda(u)\bar{G}(t-u)  d u<\rho_0\bar{F}(t)+\int_{0}^{t} z_2(u) \lambda(u)\bar{G}(t-u)  d u, \quad \text{for } t \in (\sigma_{i_0}^1,\sigma_{i_0}^1+\delta).
\end{equation*}
Plugging \eqref{eq:w1-w2lam} into the above inequality we obtain
\begin{equation*}
\int_{\sigma_{i_0}^1}^{t} \left(z_1(u)-z_2(u)\right) \lambda(u)\bar{G}(t-u)  d u<0, \quad \text{for } t \in (\sigma_{i_0}^1,\sigma_{i_0}^1+\delta).
\end{equation*}
Since $\lambda(u)\bar{G}(t-u)\geq 0$, there exists a positive measure set $\mathcal{A}\subset(\sigma_{i_0}^1,\sigma_{i_0}^1+\delta)$ such that $u\in \ca$ implies $z_1(u)-z_2(u)<0$. However, by \eqref{eq:wtbdu} and \eqref{eq:rho1<rho2} we have for almost every $t \in (\sigma_{i_0}^1,\sigma_{i_0}^2)$, $z_1(u)=1\geq z_2(u)$. This is a contradiction. Therefore, $\rho_t$ is unique.
\end{proof}

In Theorem~\ref{thm:solVol} we established the unique solvability of \eqref{eq:existlemrhou}-\eqref{eq:wtbdu} in the sense that $\rho_t$ is unique. Therefore, by Proposition~\ref{thm:fluidlimitini0} the fraction of occupied servers $\bar{N}^n_t$ converge to this unique $\rho_t$. By \eqref{eq:wtbdu} $z(t)=1$ when $\rho_t<1$. However, $z(t)$ remains unspecified when $\rho_t=1$. It would be beneficial to {specify a possible} value of $z(t)$ in this regime, specifically for the purpose of numerical experimentations. The following theorem provides a solution of $z$.

\begin{theorem}\label{thm:solVolw}
	Under the setting of Theorem~\ref{thm:solVol}, the pair $(\rho,z)$ satisfying{
\begin{align}
	&\rho_{t}=\rho_0\bar{F}(t)+\int_{0}^{t} z(u)\lambda(u) \bar{G}(t-u)  d u,\nonumber \\
	&z(t)\la(t)=\left\{\begin{array}{ll}
		\la(t), & \rho_t<1,\\
		\rho_0f(t)+\int_0^t z(u)\lambda(u)g(t-u)du, \, &\rho_t=1,
	\end{array}
	\right. \label{eq:wt}
\end{align}
and
\begin{equation}\label{eq:ztbdu}
\mathbb{1}_{\{\rho_{t}<1\}} \leq z(t) \leq 1 \quad \,\text{a.e. in }[0,T].
\end{equation}
is a solution to \eqref{eq:existlemrhou}-\eqref{eq:wtbdu}. In addition, the function $z\la$ is unique almost everywhere.}
\end{theorem}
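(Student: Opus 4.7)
The plan is to split the argument into two parts. First, observe that \eqref{eq:existlemrhou} is simply the first line of \eqref{eq:wt} supplemented by the a priori bound $0 \le \rho_t \le 1$, which is automatic by Theorem~\ref{thm:solVol}, and that \eqref{eq:wtbdu} coincides with \eqref{eq:ztbdu}. Consequently it suffices to (i) exhibit a pair $(\rho,z)$ satisfying \eqref{eq:wt}--\eqref{eq:ztbdu}, and (ii) prove that $z\la$ is almost everywhere unique.

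For existence I would start from the pair $(\rho, w)$ furnished by Proposition~\ref{thm:fluidlimitini0}, where $\rho$ is the unique fluid limit identified in Theorem~\ref{thm:solVol} and $\mathbb{1}_{\{\rho<1\}} \le w \le 1$ a.e.\ on $[0,T]$. Setting $z := w$, on $\{\rho_t < 1\}$ the constraint forces $w(t) = 1$, so $z(t)\la(t) = \la(t)$, matching the first case of \eqref{eq:wt}. For the second case I would differentiate the Volterra identity on $\{\rho_t = 1\}$. Since $z\la$ is bounded and $g \in L^1[0,T]$, the function
\[
H(t) := \int_0^t z(u)\la(u) \bar{G}(t-u)\,du
\]
is absolutely continuous with $H'(t) = z(t)\la(t) - \int_0^t z(u)\la(u) g(t-u)\,du$ for a.e.\ $t$, by the convolution differentiation theorem (\cite[Thm 2.7]{vsremr2012differentiation}) already invoked in the proof of Theorem~\ref{thm:solVol}. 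Hence $\rho_t = \rho_0 \bar{F}(t) + H(t)$ is absolutely continuous, and the standard real-analysis fact that an absolutely continuous function has vanishing derivative a.e.\ on each of its level sets gives $\rho'(t) = 0$ for a.e.\ $t \in \{\rho_t = 1\}$. Equating this to the explicit expression for $\rho'$ yields the second case of \eqref{eq:wt}, while the bound \eqref{eq:ztbdu} is inherited from $w$.

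For uniqueness of $z\la$, suppose two pairs $(\rho, z_1)$ and $(\rho, z_2)$ both solve \eqref{eq:existlemrhou}--\eqref{eq:wtbdu}. Subtracting the Volterra equations gives
\[
\int_0^t (z_1(u)-z_2(u))\bar{G}(t-u)\la(u)\,du = 0, \quad t \in [0,T].
\]
Differentiating as in the uniqueness argument of Theorem~\ref{thm:solVol} produces
\[
(z_1(t)-z_2(t))\la(t) - \int_0^t (z_1(u)-z_2(u))\la(u) g(t-u)\,du = 0 \quad \text{a.e.}
\]
Setting $\phi := (z_1-z_2)\la$, this is the homogeneous Volterra equation of the second kind $\phi(t) = \int_0^t \phi(u) g(t-u)\,du$, whose only $L^1$ solution is $\phi \equiv 0$ by \cite[Thm 1.2.8]{brunner2017volterra}. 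Therefore $z_1\la = z_2\la$ almost everywhere.

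The main obstacle is the almost-everywhere identification of $z\la$ on $\{\rho_t = 1\}$: this relies on (a) absolute continuity of $\rho$ obtained via the convolution differentiation result, and (b) invoking the real-analysis fact that an absolutely continuous function has zero derivative a.e.\ on each of its level sets. The topological structure of $\{\rho_t = 1\}$ could in principle be intricate (e.g., a positive-measure set with empty interior), but the almost-everywhere framework sidesteps this issue entirely, and the uniqueness step then reduces to the same kernel-inversion argument used for $\rho$.
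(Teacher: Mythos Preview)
Your argument is correct. The existence half matches the paper's approach: both differentiate the Volterra identity via \cite[Thm 2.7]{vsremr2012differentiation} and use $\rho'=0$ on $\{\rho=1\}$ to recover the second line of \eqref{eq:wt}. The only cosmetic difference is that the paper works on the explicit intervals $(\tau_i,\sigma_i)$ from \eqref{eq:tausigma} where $\rho\equiv 1$, whereas you invoke directly the real-analysis fact that an absolutely continuous function has vanishing derivative a.e.\ on any level set; your formulation is slightly more robust since it does not rely on the level set decomposing into countably many intervals.

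The uniqueness half is where you genuinely diverge. The paper argues interval-by-interval: on each $(\tau_i,\sigma_i)$ the relation \eqref{eq:leibniz0} is recast as a second-kind Volterra equation \eqref{eq:Volw1} for $z\lambda$ with known forcing, and uniqueness follows from \cite[Thm 6.3.1]{brunner2017volterra}; on $\{\rho<1\}$ the constraint forces $z\lambda=\lambda$. You instead subtract the two representations of the common $\rho$ globally, differentiate once, and reduce to the homogeneous equation $\phi(t)=\int_0^t\phi(u)g(t-u)\,du$ with $\phi=(z_1-z_2)\lambda$, then cite \cite[Thm 1.2.8]{brunner2017volterra}. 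This is exactly the mechanism the paper already used in \eqref{eq:intw1-w2G}--\eqref{eq:w1-w2lam} to prove uniqueness of $\rho$, so your route is shorter, avoids the interval bookkeeping, and does not require separately handling the regime $\{\rho<1\}$. The paper's version, on the other hand, yields the constructive characterization of $\sigma_i$ discussed in the subsequent remark and feeds more directly into Algorithm~\ref{alg:1}.
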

\begin{proof}
Since $z(t)$ is bounded and $\lambda(t),g(t) \in L^1[0,T]$, by Young's convolution inequality we have 
$$\frac{\partial}{\partial t} z(u)\bar{G}(t-u)\lambda(u)=-z(u)\lambda(u)g(t-u)\in L^1([0,T]\times[0,T]).$$
Therefore, by \cite[Thm 2.7]{vsremr2012differentiation} we have for $t\in[0,T]$ 
	\begin{equation}\label{eq:leibniz}
		\rho_t^{\prime}=-\rho_0f(t)+z(t)\lambda(t)-\int_0^t z(u)g(t-u)\lambda(u)du, \quad \text{a.e. in }[0,T].
	\end{equation}
Recall that $\{\tau_i,\sigma_i\}_{i\in \N}$ are defined in \eqref{eq:tausigma}. For any $i=1,2,3,\cdots,\, t\in(\tau_i,\sigma_i)$, we have $\rho_t=1$. Consequently $\rho^{\prime}_t=0$ in {these} intervals. By \eqref{eq:leibniz} we {thus have for almost every} $t\in(\tau_i,\sigma_i)$ 
	\begin{equation}\label{eq:leibniz0}
		-\rho_0f(t)+z(t)\lambda(t)-{\int_0^{\tau_i}\la(u)g(t-u)du}-\int_{\tau_i}^t z(u)\lambda(u)g(t-u)du=0.
	\end{equation}
By \cite[Thm 6.3.1]{brunner2017volterra} we know that for {$t >\tau_i$} there exist a unique solution $x(t)\in L^1_{loc}(\R_+)$ of the Volterra integral equation
	\begin{equation}\label{eq:Volw1}
		x(t)=\rho_0f(t)+{\int_0^{\tau_i}\la(u)g(t-u)du}+\int_{\tau_i}^t x(u)g(t-u)du.
	\end{equation}
{Since by \eqref{eq:leibniz0} we have $z\lambda$ is a solution to \eqref{eq:Volw1}, by the uniqueness of the solution we can conclude that $x(t)=z(t)\lambda(t)$ for $t \in (\tau_i, \sigma_i)$.} 
\end{proof}
\begin{remark}\label{rk:la-asunique}
In the proof of Theorem~\ref{thm:solVolw} we can see that when $\lambda(t)>0$, the solution $z(t)$ is unique almost surely. 
\end{remark}

\begin{remark}
{Notice that if, in addition to Assumption~\ref{asm:initial0}, we assume $g>0$ then $\la(t)>0$ a.e. when $\rho_t=1$. That is, for almost every $t\in[\tau_i,\sigma_i]$ we must have $\la(t)>0$, where $\{\tau_i,\sigma_i\}_{i\in \N}$ are defined in \eqref{eq:tausigma}. This can be proved by contradiction. Assume $\la(t)=0$ for some positive measure set $K\subset[\tau_i,\sigma_i]$. Without loss of generality we can assume $K=(t^{\prime},t^{\prime}+\delta)\subset[\tau_i,\sigma_i]$. By \eqref{eq:existlemrhou} we have 
\begin{equation}\label{eq:rhot'}
\rho_{t^{\prime}}=\rho_0\bar{F}(t^{\prime})+\int_{0}^{\tau_i} z(u) \bar{G}(t^{\prime}-u) \lambda(u) d u+\int_{\tau_i}^{t^{\prime}} z(u) \bar{G}(t^{\prime}-u) \lambda(u) d u,
\end{equation}
and
\begin{equation}\label{eq:rhot'+del}
\rho_{t^{\prime}+\delta}=\rho_0\bar{F}(t^{\prime}+\delta)+\int_{0}^{\tau_i} z(u) \bar{G}(t^{\prime}+\delta-u) \lambda(u) d u+\int_{\tau_i}^{t^{\prime}} z(u) \bar{G}(t^{\prime}+\delta-u) \lambda(u) d u.
\end{equation}
Since $\bar{F}$ is non-increasing and $\bar{G}$ is strictly decreasing, by \eqref{eq:rhot'}-\eqref{eq:rhot'+del} we have $\rho_{t^{\prime}+\delta}<\rho_{t^{\prime}}$. This is a contradiction since $\rho_t=1$ for $t\in(\tau_i,\sigma_i)$.
}
\end{remark}

\begin{remark}
Note that the proof of Theorem~\ref{thm:solVolw}, provides a characterization for $\si_i$. Indeed, $\sigma_i$ equals the first time after $\tau_i$ that $x(t)>\lambda(t)$ for a positive measure set. To see that, notice by \eqref{eq:ztbdu}, for $t\in(\tau_i,\sigma_i)$ we have $x(t)=z(t)\la(t)\leq \lambda(t)$. Suppose there exist $\varepsilon>0$ such that $x(t)\leq\la(t)$ for almost every $t\in[\sigma_i,\sigma_i+\varepsilon)$, then choose $\tilde{z}$ such that $\tilde{z}$ satisfies \eqref{eq:ztbdu} and $\tilde{z}(t)\la(t)=x(t)$ for $t\in[\sigma_i,\sigma_i+\varepsilon)$ {and, $\tilde{z}(t)=z(t)$ for $t\in[0,\sigma_i)$}. By \eqref{eq:leibniz} and \eqref{eq:Volw1} we know that there exists a function $\tilde{\rho}$ such that 
\begin{align*}
\tilde{\rho}_t=\left\{\begin{array}{ll}
\rho_t & t\in[0,\sigma_i) \\
\rho_0\bar{F}(t)+\int_{0}^{t} \tilde{z}(u)\lambda(u) \bar{G}(t-u)  d u & t\in[\sigma_i,\sigma_i+\varepsilon)
\end{array}
\right.,
\end{align*}
and $\tilde{\rho}_t^{\prime}=0$ for $t\in[\sigma_i,\sigma_i+\varepsilon)$. This implies that $\tilde{\rho}_t=1$ in this interval. However, $\rho_t<1$ for $t\in[\sigma_i,\sigma_i+\varepsilon)$. From the uniqueness of $\rho$ established by Theorem~\ref{thm:solVol} we can concluded that this is a contradiction.
\end{remark}
{Since we have obtained the unique solvability of $\rho$, we can establish the fluid limit result for the entire sequence $\bar{N}^{n}$.}
\begin{theorem}\label{thm:fluidlimitini0uni}
	Let Assumption~\ref{asm:initial0} hold. Then 
\begin{enumerate}[label = (\roman*), wide, labelwidth=!, labelindent=0pt]
\item For any $T > 0$, there exists a real-valued continuous deterministic process $\rho$ such that almost surely,
	\begin{equation}\label{eq:limitsub0rho0n}
		\lim_{n\rightarrow\infty}\sup_{t\in[0,T]}|\bar{N}^{n}_t-\rho_t|=0. 
	\end{equation} 
\item {Moreover, there exists a bounded function $w$ such that almost surely}
\begin{equation}\label{eq:rhoweak*wrho0n}
\mathbb{1}_{\{\bar{N}^{n}_{t-}<1\}}\stackrel{*}{\rightharpoonup}w(t) \quad \text{in } L^\infty[0,T], 
\end{equation}
{$\la$-almost surely in $t$, where $w$ solves \eqref{eq:wt}-\eqref{eq:ztbdu}.}
\item {Furthermore, $\rho$ and $w$ defined in \eqref{eq:limitsub0rho0n}-\eqref{eq:rhoweak*wrho0n} satisfy}
	\begin{align}\label{eq:rho-wrho0n}
		&\rho_{t}=\rho_0\bar{F}(t)+\int_{0}^{t} w(u) \bar{G}(t-u) \lambda(u) d u, \quad t \in [0,T],\quad\text{and}\\%\label{eq:rhow}
&\mathbb{1}_{\left\{\rho_{u-}<1\right\}} \leq w(u) \leq 1,\quad\text{a.e. in }[0,T].\nonumber
	\end{align}
That is, $(\rho, w)$ as in \eqref{eq:rho-wrho0n} is a solution, interpreted according to Definition~\ref{def:sol_Vol}, to the following non-linear discontinuous Volterra integral equation
	\begin{equation}\label{eq:rhoVolinin}
		\rho_{t}=\rho_0\bar{F}(t)+\int_{0}^{t} \mathbb{1}_{\{\rho_{u-}< 1 \}} \bar{G}(t-u) \lambda(u) d u .
	\end{equation}

\end{enumerate}
\end{theorem}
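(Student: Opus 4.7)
The plan is to upgrade the subsequential convergence of Proposition~\ref{thm:fluidlimitini0} to convergence of the full sequence by invoking the uniqueness of the limit. Once we know that every subsequential limit must coincide with the common deterministic $\rho$ produced by Theorem~\ref{thm:solVol}, the standard subsubsequence principle converts subsequential convergence into full-sequence convergence.

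Specifically, I would take $\rho$ to be the unique deterministic continuous solution to \eqref{eq:rhoVolinin} furnished by Theorem~\ref{thm:solVol}, and $w$ to be the associated function from Theorem~\ref{thm:solVolw}, which is $\la$-almost everywhere unique by Remark~\ref{rk:la-asunique}. For part (i), fix an arbitrary subsequence of $\{\bar{N}^n\}$. Proposition~\ref{thm:fluidlimitini0}(i) produces a further subsequence $(\bar{N}^{r_k})$ converging almost surely in the uniform topology to some continuous, possibly stochastic process $\tilde\rho$, and by part (iii) of that proposition $\tilde\rho(\omega)$ almost surely solves \eqref{eq:rhoVolini} in the sense of Definition~\ref{def:sol_Vol}. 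Theorem~\ref{thm:solVol} then forces $\tilde\rho(\omega) = \rho$ for almost every $\omega$. Since every subsequence of $\{\bar{N}^n\}$ admits a further subsequence converging almost surely to the same deterministic limit $\rho$, the subsubsequence principle upgrades this to $\bar{N}^n \to \rho$ almost surely in the uniform topology, establishing \eqref{eq:limitsub0rho0n}.

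The same scheme handles part (ii) in the weak-$*$ topology. By Proposition~\ref{thm:fluidlimitini0}(ii), any subsequence of $\{\mathbb{1}_{\{\bar{N}^n_{u-}<1\}}\}$ has a further weak-$*$ convergent subsubsequence with limit $\tilde w$ satisfying \eqref{eq:rho-wrho0} paired with the deterministic $\rho$ identified above. Differentiating the resulting convolution identity as in \eqref{eq:leibniz} and invoking Theorem~\ref{thm:solVolw} together with Remark~\ref{rk:la-asunique} forces $\tilde w\la = w\la$ almost everywhere on $[0,T]$, i.e., $\tilde w = w$ on $\{\la>0\}$. Applying the subsubsequence principle in the separable predual $L^1([0,T], \la(t)\,dt)$ then yields the $\la$-almost everywhere weak-$*$ convergence claimed. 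Part (iii) is immediate by combining the identifications from parts (i) and (ii) with the Volterra relation of Proposition~\ref{thm:fluidlimitini0}(iii), which shows that $(\rho,w)$ satisfies \eqref{eq:rho-wrho0n} and solves \eqref{eq:rhoVolinin} in the sense of Definition~\ref{def:sol_Vol}.

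The main obstacle I anticipate is in part (ii), where the ambiguity of $w$ on $\{\la = 0\}$ precludes unqualified weak-$*$ convergence in $L^\infty[0,T]$ against arbitrary $L^1$ test functions; the convergence is only meaningful modulo $\la$-null sets, which is exactly why the statement qualifies the convergence by ``$\la$-almost surely in $t$''. For part (i) this difficulty is absent since Theorem~\ref{thm:solVol} determines $\rho$ continuously and pointwise, so the subsubsequence argument applies directly in the uniform topology without further qualification.
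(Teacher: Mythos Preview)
Your proposal is correct and follows essentially the same approach as the paper: both arguments upgrade the subsequential convergence of Proposition~\ref{thm:fluidlimitini0} to full-sequence convergence via the subsubsequence principle, invoking the uniqueness of $\rho$ from Theorem~\ref{thm:solVol} for part~(i) and the uniqueness of $w\lambda$ from Theorem~\ref{thm:solVolw} for part~(ii). Your anticipation of the $\{\lambda=0\}$ ambiguity and the resulting ``$\lambda$-almost surely'' qualification is exactly on point; the paper handles this equivalently by phrasing the weak-$*$ convergence for the product $\mathbb{1}_{\{\bar{N}^{n}_{u-}<1\}}\lambda(u)$ rather than working in the weighted predual $L^1([0,T],\lambda(t)\,dt)$ as you do.
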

\begin{proof}
\emph{Part (i).}
From Proposition~\ref{thm:fluidlimitini0}, for any subsequence there exists a further subsequence $(r_k)$ such that almost surely{
$$
\bar{N}^{r_k}_t\rightarrow\rho_t,
$$
in the uniform topology,} where $\rho$ solves \eqref{eq:rhoVolinin} {path by path}. By Theorem~\ref{thm:solVol}, $\rho$ is unique. Consequently $\rho$ is a deterministic function. {Moreover, from the uniqueness of $\rho$ again we can conclude that the entire sequence $\bar{N}^n$ converges to $\rho$ almost surely in uniform topology. }

\vspace{0.1in}
\noindent
{\emph{Part (ii).} By Proposition~\ref{thm:fluidlimitini0} we have for every subsequence there exists a subsubsequence $(r_k)$ and a bounded, possibly stochastic process $w$ such that almost surely
\begin{equation}\label{eq:indlaweak*}
\mathbb{1}_{\{\bar{N}^{r_k}_{u-}<1\}}\la(u)\stackrel{*}{\rightharpoonup}w(u)\la(u) \quad \text{in } L^\infty[0,T].
\end{equation}
By Theorem~\ref{thm:solVolw} we know that this $w\la$ is unique. Therefore the weak-star convergence in \eqref{eq:indlaweak*} holds for the entire sequence.  }

\vspace{0.1in}
\noindent
{\emph{Part (iii).}
This follows directly from Proposition~\ref{thm:fluidlimitini0}.(iii) and Theorems~\ref{thm:solVol}-\ref{thm:solVolw}.
}
\end{proof}

Note that the probability that an incoming arrival at time $t$ will be accepted to the system is given by $P(\rho_{t-}^n < 1)$. The following corollary provides asymptotics for this acceptance probability.
\begin{corollary}\label{cor:acceptprob}
The acceptance probability in the $n$-th $M_t/G/n/n$ model $\mathbb{P}(\bar{N}^n_{t-}<1)$ satisfies the following convergence%for any $t\in[0,T]$ and any $\phi\in L^1[0,T]$
\begin{equation*}
\mathbb{P}\left(\bar{N}^{n}_{u-}<1\right)\rightarrow w(u) ,\quad {\text{for $\la$-almost every } u\in[0,T],}
\end{equation*}
where $w$ is defined in Theorem~\ref{thm:fluidlimitini0uni}.
\end{corollary}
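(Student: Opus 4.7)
\emph{Proof proposal.} Set $p_n(u):=\mathbb{P}(\bar N^n_{u-}<1)$. The plan is to pass to expectations in the pathwise weak-$*$ convergence furnished by Theorem~\ref{thm:fluidlimitini0uni}(ii), resolve the ``generic'' case $\rho_u<1$ via the uniform convergence $\bar N^n\to\rho$, and treat the remaining set $\{\rho_u=1\}$ by a Volterra uniqueness argument.

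From Theorem~\ref{thm:fluidlimitini0uni}(ii), almost surely $\mathbb{1}_{\{\bar N^n_{u-}<1\}}\stackrel{*}{\rightharpoonup}w$ in $L^\infty[0,T]$, and the indicators are uniformly bounded by $1$. For each $\phi\in L^1[0,T]$, bounded convergence combined with Fubini thus gives
$$
\int_0^T\phi(u)p_n(u)\,du=\mathbb{E}\!\left[\int_0^T\phi(u)\mathbb{1}_{\{\bar N^n_{u-}<1\}}\,du\right]\longrightarrow\int_0^T\phi(u)w(u)\,du,
$$
so that $p_n\stackrel{*}{\rightharpoonup}w$ in $L^\infty[0,T]$. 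For the easy case $\rho_u<1$, continuity of $\rho$ (Theorem~\ref{thm:fluidlimitini0uni}) supplies $\delta,\epsilon>0$ with $\rho_s\le1-\epsilon$ on $[u-\delta,u]$; the a.s.\ uniform convergence $\bar N^n\to\rho$ from Theorem~\ref{thm:fluidlimitini0uni}(i) then forces $\bar N^n_{u-}<1$ a.s.\ for all $n$ large, and bounded convergence yields $p_n(u)\to 1=w(u)$, consistent with the identity $w=1$ on $\{\rho<1\}$ in Theorem~\ref{thm:fluidlimitini0uni}(iii).

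For $u\in\{\rho=1\}$, I would extract an integral identity by taking expectation of \eqref{eq:rho^n0} via Campbell's formula (Theorem~\ref{thm:campbell}), giving
$$
\mathbb{E}[\bar N^n_t]=\frac{\mathbb{E}[N^n_0]}{n}\bar F^n(t)+\int_0^t p_n(u)\bar G(t-u)\lambda(u)\,du.
$$
Bounded convergence of $\bar N^n\to\rho$ gives $\mathbb{E}[\bar N^n_t]\to\rho_t$, and Assumption~\ref{asm:initial0} handles the initial contribution, so comparing with \eqref{eq:rho-wrho0n} shows that every weak-$*$ subsequential limit $q^\ast\in L^\infty[0,T]$ of the bounded sequence $(p_n\lambda)$ must satisfy the same convolution equation against $\bar G$ as $w\lambda$, while agreeing with $\lambda$ on $\{\rho<1\}$ from the previous step. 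The Volterra uniqueness argument underlying the proof of Theorem~\ref{thm:solVolw} (combined with Remark~\ref{rk:la-asunique}) then pins down $q^\ast=w\lambda$ almost everywhere with respect to $\lambda\,du$.

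I expect the main obstacle to be promoting the weak-$*$ / integral convergence of $p_n\lambda$ to the $\lambda$-almost-everywhere pointwise statement asserted by the corollary on $\{\rho=1\}$. The decomposition $\{\rho=1\}=\bigcup_i[\tau_i,\sigma_i]$ into countably many intervals identified in \eqref{eq:tausigma} should localize the issue: on each interval $w\lambda$ solves the linear Volterra equation \eqref{eq:wt}, so a Banach--Alaoglu extraction combined with Volterra uniqueness precludes any subsequential weak-$*$ limit of $p_n\lambda$ from deviating from $w\lambda$ on a set of positive $\lambda\,du$-measure, which is exactly what is needed to close the argument.
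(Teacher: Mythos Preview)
Your treatment of the set $\{\rho_u<1\}$ is clean and correct, and the Campbell/Volterra argument does establish that every weak-$*$ cluster point of $(p_n\lambda)$ in $L^\infty[0,T]$ coincides with $w\lambda$; hence $p_n\lambda\stackrel{*}{\rightharpoonup}w\lambda$. The genuine gap, which you yourself flag, is the upgrade from weak-$*$ convergence to $\lambda$-a.e.\ pointwise convergence on $\{\rho=1\}$. Your proposed resolution does not close it: Banach--Alaoglu plus Volterra uniqueness only identifies the weak-$*$ limit, and uniqueness of that limit gives back nothing stronger than the full-sequence weak-$*$ convergence you already have. Weak-$*$ convergence in $L^\infty$ is strictly weaker than pointwise a.e.\ convergence (e.g.\ $u\mapsto\tfrac12(1+\sin(nu))$ converges weak-$*$ to the constant $\tfrac12$ but converges pointwise nowhere), so without extra regularity of $p_n$ on $[\tau_i,\sigma_i]$ the argument cannot conclude.

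The paper bypasses this obstacle by working at the process level rather than with the scalar functions $p_n$. It first observes that the indicator paths $t\mapsto\mathbb{1}_{\{\bar N^n_{t-}<1\}}$ are piecewise constant with finitely many jumps, hence tight in $\mathbb{D}[0,T]$; combined with the weak-$*$ identification of finite-dimensional limits as $w$, this yields weak convergence in $\mathbb{D}$. Because $w\lambda$ is deterministic (Theorem~\ref{thm:solVolw}), weak convergence becomes convergence in probability in the Skorokhod topology, and the projection $\pi_u$ is continuous at $w\lambda$ for all but countably many $u$. Continuous mapping then gives $\mathbb{1}_{\{\bar N^n_{u-}<1\}}\lambda(u)\xrightarrow{p}w(u)\lambda(u)$ for a.e.\ $u$, and uniform integrability delivers the pointwise convergence of expectations $p_n(u)\lambda(u)\to w(u)\lambda(u)$. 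The Skorokhod tightness step is precisely the missing ingredient that turns an integral (weak-$*$) statement into a pointwise one.
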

\begin{proof}
\( \mathbb{1}_{\{\bar{N}^{n}_{u-}<1\}} \) is piecewise constant with almost surely finitely many jumps in \( [0,T] \) since the number of jumps of \( \bar{N}^{n} \) is bounded by twice that of the arrivals. By Theorem~\ref{thm:tightness} we obtain the tightness of \( (\mathbb{1}_{\{\bar{N}^{n}_{u-}<1\}} )\). By \eqref{eq:rhoweak*wrho0n} we have $\lambda$-almost surely, for any $\phi \in L^1[0,T]$ 
	\begin{equation*}%\label{eq:rho0weakstar}
		\lim _{k \rightarrow \infty} \int_0^t \phi(u) \mathbb{1}_{\{\bar{N}^{n}_{u-}<1\}} d u=\int_0^t \phi(u) w(u) d u.
	\end{equation*}
By taking $\phi(\cdot)=\delta_{(\cdot)}$ we can conclude that the finite dimensional {distributions of $\mathbb{1}_{\{\bar{N}^{n}_{t-}<1\}}$ converge to that of $w(t)$}. By Theorem~\ref{thm:weaklyconv} we have $\mathbb{1}_{\{\bar{N}^{n}_{u-}<1\}}\Rightarrow w(u)$. {This implies that $\mathbb{1}_{\{\bar{N}^{n}_{u-}<1\}}\la(u)\Rightarrow w(u)\la(u)$. By Theorem~\ref{thm:solVolw} we have $w\la$ is unique and thus deterministic. Therefore, the convergence becomes 
$$\mathbb{1}_{\{\bar{N}^{n}_{u-}<1\}}\la(u)\stackrel{p}{\rightarrow}w(u)\la(u),$$
in the Skorokhod topology. Moreover, since the indicator functions are uniformly bounded and $\la$ is integrable in $[0,T]$, $(\mathbb{1}_{\{\bar{N}^{n}_{u-}<1\}}\la(u))$ are uniformly integrable. By \cite[Thm 5.5.2]{durrett2019probability} we have for almost every $u\in[0,T]$,
\begin{equation*}
\E\left[\mathbb{1}_{\{\bar{N}^{n}_{u-}<1\}}\la(u)\right]=\mathbb{P}\left(\bar{N}^{n}_{u-}<1\right)\la(u)\rightarrow w(u)\la(u) .
\end{equation*}
}
\end{proof}

\begin{remark}\label{rk:wdiscontinuous}
As noticed in \eqref{eq:wt}, the function $w(t)$ can be discontinuous at $\tau_i$ even when $\lambda$ is continuous. This means the limit of the blocking/acceptance probability is discontinuous. This property is further reflected in the numerics below.
\end{remark}

\section{Fluid Limit for Loss System with Buffer}\label{sec:finite-buffer}

\subsection{Setup.}
In this section, we introduce a time-varying many-server loss queuing model with buffer. We work with a sequence of queuing systems indexed by $n$, subject to the following assumptions.

\begin{assumption}\label{asm:queue}
{We} consider a $M_t / G / n / n + b_n$ loss queuing system; namely, a {queuing} model with
\begin{enumerate}[label=\roman*.]
	\item a nonhomogeneous Poisson arrival process $A^n$ with rate or intensity function $n \lambda(\cdot)$, where $\lambda$ is locally integrable;
	\item general customer service times sampled {independently} from a distribution $G$ with density $g$ bounded by a constant $c_g>0$; 
	\item the system has $n$ servers and $b_n$ buffer spaces or waiting spaces. When the system is full, new incoming customer arrivals are lost.  Additionally, $b_n$ satisfies
\begin{equation}\label{eq:anbn}
%\lim_{n\rightarrow\infty}1\rightarrow\al,\quad 
\lim_{n\rightarrow\infty}\frac{b_n}{n}\rightarrow\beta.
\end{equation} 
\end{enumerate}

\end{assumption}

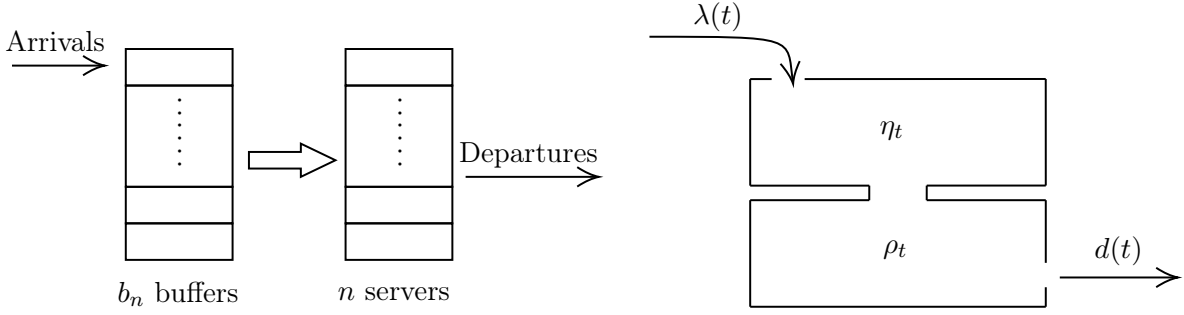
\begin{figure}
\centering
\tikzset{every picture/.style={line width=0.75pt}} %set default line width to 0.75pt        
%\resizebox{!}{0.15\textwidth}
{%
\begin{tikzpicture}[x=0.75pt,y=0.75pt,yscale=-1,xscale=1]
%uncomment if require: \path (0,300); %set diagram left start at 0, and has height of 300

%Shape: Rectangle [id:dp10291208520024442] 
\draw   (74.39,59.9) -- (128.25,59.9) -- (128.25,78.34) -- (74.39,78.34) -- cycle ;
%Shape: Rectangle [id:dp6496682360999618] 
\draw   (74.39,78.34) -- (128.25,78.34) -- (128.25,129.46) -- (74.39,129.46) -- cycle ;
%Shape: Rectangle [id:dp11968737072168867] 
\draw   (74.39,147.9) -- (128.25,147.9) -- (128.25,166.34) -- (74.39,166.34) -- cycle ;
%Shape: Rectangle [id:dp9225278796978752] 
\draw   (74.39,129.46) -- (128.25,129.46) -- (128.25,147.9) -- (74.39,147.9) -- cycle ;
%Shape: Rectangle [id:dp2125313685474025] 
\draw   (185.24,59.9) -- (239.1,59.9) -- (239.1,78.34) -- (185.24,78.34) -- cycle ;
%Shape: Rectangle [id:dp4659867475797742] 
\draw   (185.24,78.34) -- (239.1,78.34) -- (239.1,129.46) -- (185.24,129.46) -- cycle ;
%Shape: Rectangle [id:dp8480190708370663] 
\draw   (185.24,147.9) -- (239.1,147.9) -- (239.1,166.34) -- (185.24,166.34) -- cycle ;
%Shape: Rectangle [id:dp5372732052755468] 
\draw   (185.24,129.46) -- (239.1,129.46) -- (239.1,147.9) -- (185.24,147.9) -- cycle ;
%Right Arrow [id:dp45593857178806385] 
\draw   (136.5,111.86) -- (162.68,111.86) -- (162.68,107.67) -- (180.13,116.05) -- (162.68,124.43) -- (162.68,120.24) -- (136.5,120.24) -- cycle ;
%Straight Lines [id:da3473116778659451] 
\draw    (17,68.28) -- (61.78,68.28) ;
\draw [shift={(63.78,68.28)}, rotate = 180] [color={rgb, 255:red, 0; green, 0; blue, 0 }  ][line width=0.75]    (10.93,-4.9) .. controls (6.95,-2.3) and (3.31,-0.67) .. (0,0) .. controls (3.31,0.67) and (6.95,2.3) .. (10.93,4.9)   ;
%Straight Lines [id:da32372057530588194] 
\draw    (245.78,124.43) -- (311,124.43) ;
\draw [shift={(313,124.43)}, rotate = 180] [color={rgb, 255:red, 0; green, 0; blue, 0 }  ][line width=0.75]    (10.93,-4.9) .. controls (6.95,-2.3) and (3.31,-0.67) .. (0,0) .. controls (3.31,0.67) and (6.95,2.3) .. (10.93,4.9)   ;
%Straight Lines [id:da9912310210836845] 
\draw    (478.33,136.21) -- (538.38,136.21) ;
%Straight Lines [id:da08982162180605646] 
\draw    (389.3,136.21) -- (449.35,136.21) ;
%Straight Lines [id:da0535783545870685] 
\draw    (389.3,136.21) -- (389.3,190) ;
%Straight Lines [id:da34357943284284365] 
\draw    (538.38,136.21) -- (538.38,167.83) ;
%Straight Lines [id:da23085426047561286] 
\draw    (389.3,190) -- (538.38,190) ;
%Straight Lines [id:da7343108361954429] 
\draw    (449.36,128.86) -- (389.31,128.87) ;
%Straight Lines [id:da6241312656924058] 
\draw    (538.38,128.82) -- (478.34,128.84) ;
%Straight Lines [id:da14893224885081024] 
\draw    (538.38,128.82) -- (538.36,75.03) ;
%Straight Lines [id:da5311625978598039] 
\draw    (389.31,128.87) -- (389.29,75.08) ;
%Straight Lines [id:da6640018414768397] 
\draw    (538.36,75.03) -- (416.77,75.08) ;
%Straight Lines [id:da2259869542159083] 
\draw    (449.36,128.86) -- (449.35,136.21) ;
%Straight Lines [id:da07372402082074447] 
\draw    (478.34,128.84) -- (478.33,136.21) ;
%Straight Lines [id:da8148063811046217] 
\draw    (538.38,180.15) -- (538.38,190) ;
%Straight Lines [id:da7773946803124646] 
\draw    (389.29,75.08) -- (400.12,75.08) ;
%Curve Lines [id:da7374173219869262] 
\draw    (338.5,53.71) .. controls (413.2,53.71) and (407.95,53.71) .. (410.72,75.77) ;
\draw [shift={(410.95,77.52)}, rotate = 262.04] [color={rgb, 255:red, 0; green, 0; blue, 0 }  ][line width=0.75]    (10.93,-4.9) .. controls (6.95,-2.3) and (3.31,-0.67) .. (0,0) .. controls (3.31,0.67) and (6.95,2.3) .. (10.93,4.9)   ;
%Straight Lines [id:da09123730401092889] 
\draw    (545.45,175.22) -- (603,175.22) ;
\draw [shift={(605,175.22)}, rotate = 180] [color={rgb, 255:red, 0; green, 0; blue, 0 }  ][line width=0.75]    (10.93,-4.9) .. controls (6.95,-2.3) and (3.31,-0.67) .. (0,0) .. controls (3.31,0.67) and (6.95,2.3) .. (10.93,4.9)   ;

% Text Node
\draw (106,82) node [anchor=north west][inner sep=0.75pt]  [rotate=-90]  {$\cdots \cdots $};
% Text Node
\draw (68.69,175.78) node [anchor=north west][inner sep=0.75pt]   [align=left] {$\displaystyle b_{n}$ buffers};
% Text Node
\draw (217,82) node [anchor=north west][inner sep=0.75pt]  [rotate=-90]  {$\cdots \cdots $};
% Text Node
\draw (179.87,179) node [anchor=north west][inner sep=0.75pt]   [align=left] {$\displaystyle n$ servers};
% Text Node
\draw (12.23,49.3) node [anchor=north west][inner sep=0.75pt]   [align=left] {Arrivals};
% Text Node
\draw (241,105.46) node [anchor=north west][inner sep=0.75pt]   [align=left] {Departures};
% Text Node
\draw (358.98,34.61) node [anchor=north west][inner sep=0.75pt]    {$\lambda ( t)$};
% Text Node
\draw (453.26,95) node [anchor=north west][inner sep=0.75pt]    {$\eta _{t}$};
% Text Node
\draw (454.92,155) node [anchor=north west][inner sep=0.75pt]    {$\rho _{t}$};
% Text Node
\draw (561.52,152.84) node [anchor=north west][inner sep=0.75pt]    {$d( t)$};

\end{tikzpicture}
}
\caption{{Loss system with buffer and its fluid model}}
\end{figure}

\subsection{Characterization of Relevant Stochastic Processes.}
Let $S_t^{n}$ and $Q_t^{n}$ denote the number of customers in service and buffer, respectively, at time $t$. In addition, let $D_t^n$ denote the cumulative number of departures from the system by time $t$. For the scaled processes, we define 
$$
\bar{S}^n_t:= \frac{S_t^{n}}{n},\, \bar{Q}^n_t:= \frac{Q_t^{n}}{n}, \text{ and } \bar{D}^n_t:=\frac{D^n_t}{n},
$$
to be the $n-$scaled number in service, in buffer and of departures respectively. Also, let $\mathcal{F}_t^n$ be the filtration generated by $\{\bar{S}^n_s, \bar{Q}^n_s : s \in [0, t]\}$. Let $T_i$, $V_i$, and $D_i$ represent respectively the arrival time, service time, and departure time of the $i$-th customer to the system. Note that a customer who arrives to find at least one idle server has their arrival time coincide with their service start time. However, a customer who upon arrival finds all servers busy and must first enter the buffer to wait, has their service start time determined by the arrival and service times of prior customers. In addition, their service entry time coincides with the departure time of a prior customer. For this scenario, we let $V_{j_i}$ denote the service time of the customer who enters service at time $D_i$. For simplicity, we initially assume that the number of customers in the system at time $t=0$ is zero. This assumption will be relaxed in the sequel. 

\subsubsection{Busy servers {or customers in service}.}
Observe that the number of busy servers or the number in service at time $t$ consists of customers from two groups: 
\begin{enumerate}[wide, labelwidth=!, labelindent = 0pt]
\item[(a)] \emph{Customers admitted directly upon arrival.} This scenario is similar to the setup of Section~\ref{sec:zero-buffer}. 
Observe that the number of customers at time $t$, who were directly admitted upon arrival, consists of all arrivals to the system such that all of the following conditions are met:
\begin{enumerate}[label = (\roman*), wide, labelwidth=!, labelindent=0pt]
\item the customer arrival occurs {at or} prior to time $t$, 
\item the number of occupied servers upon the customer's arrival is less than $n$, and
\item the remaining service time of this customer at time $t$ is positive, that is, the customer is yet to depart the system. 
\end{enumerate}
For the $i-$th customer arriving to the system, these conditions correspond to $\{T_i \leq t\}$, $\{S_{T_i-}^{n} < n\}$ or $\{\bar{S}_{T_i-}^{n} < 1\}$, and $\{V_i > t-T_i\}$ respectively. Consequently, the number of customers at time $t$, who were directly admitted upon arrival equals:
\beq\label{eq:d-s}
\sum_{i=1}^{\infty} \mathbb{1}_{\{T_{i} \leq t\} } \mathbb{1}_{\{S_{T_{i}-}^{n}<n\}} \mathbb{1}_{\{V_{i} >t-T_{i}\}}.
\eeq

\item[(b)] \emph{Customers promoted from the buffer.} The customers in this scenario start service at the departure time $D_i$ of some customer $i$. Observe that the number of customers at time $t$, who were promoted from the buffers, consists of all departures such that all of the following conditions are met:
\begin{enumerate}[label = (\roman*), wide, labelwidth=!, labelindent=0pt]
\item the service start time $D_i$ of this customer is {at or prior to} time $t$,
\item the buffer is non-empty at time $D_i-$, and
\item the remaining service time of this customer at time $t$ is positive, that is, the customer is yet to depart the system.
\end{enumerate}
For the customer promoted from buffer at time $D_i$, these conditions correspond to $\{D_i \leq t\}$, $\{Q_{D_i-}^{n} > 0\}$ or $\{\bar{Q}_{D_{i}-}^{n}>0\}$, and $\{V_{j_i} > t-D_i\}$ respectively. Consequently, the number of customers at time $t$, who were promoted from the buffer equals:
\beq\label{eq:b-s}
\sum_{i=1}^{\infty}\mathbb{1}_{\{ D_{i} \leq t\}} \mathbb{1}_{\{Q_{D_i-}^{n} > 0\}}  \mathbb{1}_{\{ D_{i}+V_{j_i}> t\}}.
\eeq
\end{enumerate} 
Therefore, by combining the two groups of customers from \eqref{eq:d-s}-\eqref{eq:b-s}, we have the number of customers in service at time $t$ satisfies
\begin{equation}\label{eq:N^nbuf}
S_{t}^{n}=\sum_{i=1}^{\infty} \mathbb{1}_{\{T_{i} \leq t\} } \mathbb{1}_{\{S_{T_{i}-}^{n}<n\}} \mathbb{1}_{\{V_{i} >t-T_{i}\}}+\sum_{i=1}^{\infty} \mathbb{1}_{\{ D_{i} \leq t\}}\mathbb{1}_{\{Q_{D_i-}^{n} > 0\}}  \mathbb{1}_{\{ D_{i}+V_{j_i}> t\}} .
\end{equation}
On scaling \eqref{eq:N^nbuf} by $n$, we have in contrast to \eqref{eq:rho^n1} that the scaled number of busy servers satisfy
\begin{equation}\label{eq:rho^nDef}
\bar{S}^n_t=\frac{1}{n} \sum_{i=1}^{\infty} \mathbb{1}_{\{T_{i} \leq t\} }\mathbb{1}_{\{\bar{S}_{T_{i}-}^{n}<1\}} \mathbb{1}_{\{V_{i} >t-T_{i}\}} 
+\frac{1}{n} \sum_{i=1}^{\infty} \mathbb{1}_{\{ D_{i} \leq t\}}\mathbb{1}_{\{\bar{Q}_{D_{i}-}^{n}>0\}}  \mathbb{1}_{\{ D_{i}+V_{j_i}> t\}}.
\end{equation}

\subsubsection{Occupied buffers.} 
The number of occupied buffers equals the difference between two groups:
\begin{enumerate}[wide, labelwidth=!, labelindent=0pt]
\item[(a)] \emph{Customers that entered the buffer.} Observe that the total number of customers who entered the buffer by time $t$ consists of those individuals who satisfy all of the following conditions:
\begin{enumerate}[label = (\roman*), wide, labelwidth=!, labelindent=0pt]
\item the customer arrival occurs {at or} prior to time $t$, 
\item the number of occupied servers upon the customer's arrival is $n$, and
\item the buffer upon the customer's arrival is not full.
\end{enumerate}
For the $i-$th customer arriving to the system, these conditions correspond to $\{T_i \leq t\}$, $\{S_{T_i-}^{n} = n\}$ or $\{\bar{S}_{T_i-}^{n} = 1\}$, and $\{Q_{T_i-}^{n} < b_n\}$ or $\{\bar{Q}_{T_i-}^{n} < \frac{b_n}{n}\}$ respectively. Consequently, the number of customers who entered the buffer by time $t$ equals:
\begin{equation}\label{eq:e-b}
\sum_{i=1}^{\infty} \mathbb{1}_{\{T_{i} \leq t\}} \mathbb{1}_{\{S_{T_i-}^{n} = n\}} \mathbb{1}_{\{Q_{T_i-}^{n} < b_n\}} .
\end{equation}
\item[(b)] \emph{Customers that exited the buffer.} The customers in this scenario start service at the departure time $D_i$ of some customer $i$. Observe that the total number of customers who departed from the buffer by time $t$ consists of those individuals who satisfy all of the following conditions:
\begin{enumerate}[label = (\roman*), wide, labelwidth=!, labelindent=0pt]
\item the service start time $D_i$ of this customer is {at or prior to} time $t$, and
\item the buffer is non-empty at time $D_i-$.
\end{enumerate}
For the customer departing from buffer at time $D_i$, these conditions correspond to $\{D_i \leq t\}$ and $\{Q_{D_i-}^{n} > 0\}$ or $\{\bar{Q}_{D_{i}-}^{n}>0\}$ respectively. Consequently, the number of customers that exited the buffer by time $t$ equals:
\begin{equation}\label{eq:l-b}
\sum_{i=1}^{\infty} \mathbb{1}_{\{D_{i} \leq t\}} \mathbb{1}_{\{Q_{D_i-}^{n} > 0\}}.
\end{equation}
\end{enumerate}
{Therefore, by taking} the difference between the two groups of customers from \eqref{eq:e-b}-\eqref{eq:l-b}, we have the number of customers in buffer at time $t$ satisfies
\begin{equation}\label{eq:N^yDef}
Q_{t}^{n}=\sum_{i=1}^{\infty} \mathbb{1}_{\{T_{i} \leq t\}} \mathbb{1}_{\{S_{T_i-}^{n} = n\}} \mathbb{1}_{\{Q_{T_i-}^{n} < b_n\}}  - \sum_{i=1}^{\infty} \mathbb{1}_{\{D_{i} \leq t\}} \mathbb{1}_{\{Q_{D_i-}^{n} > 0\}}.
\end{equation}	
{On} scaling \eqref{eq:N^yDef} by $n$, this yields that the scaled number in buffer satisfy
\begin{equation}\label{eq:y^nDef}
\bar{Q}^n_t=  \frac{1}{n} \sum_{i=1}^{\infty} \mathbb{1}_{\{T_{i} \leq t\}}\mathbb{1}_{\{\bar{S}_{T_{i}-}^{n}=1\}} \mathbb{1}_{\{\bar{Q}_{T_{i}-}^{n}<\frac{b_n}{n}\}} -\frac{1}{n} \sum_{i=1}^{\infty} \mathbb{1}_{\{D_{i} \leq t\}} \mathbb{1}_{\{\bar{Q}_{D_{i}-}^{n}>0\}}.
\end{equation}	
\subsubsection{Departures} Observe that the cumulative number of departures also include customers from two groups: 
\begin{enumerate}[wide, labelwidth=!, labelindent=0pt]
\item[(a)] \emph{Departure of customers admitted directly upon arrival.} Observe that the number of customers at time $t$, who were directly admitted upon arrival and then departed from the system, consists of those customers who satisfy all of the following conditions:
\begin{enumerate}[label = (\roman*), wide, labelwidth=!, labelindent=0pt]
\item the customer arrival occurs {at or} prior to time $t$, 
\item the number of occupied servers upon the customer's arrival is less than $n$, and
\item the customer has departed from the system {by} time $t$.
\end{enumerate}
For the $i-$th customer arriving to the system, these conditions correspond to $\{T_i \leq t\}$, $\{S_{T_i-}^{n} < n\}$ or $\{\bar{S}_{T_i-}^{n} < 1\}$, and $\{T_i+V_i\leq t\}$ respectively. Consequently, the number of customers at time $t$, who were admitted directly upon arrival and then departed equals:
\begin{equation}\label{eq:d-d}
\sum_{i=1}^{\infty} \mathbb{1}_{\{T_{i} \leq t\} } \mathbb{1}_{\{S_{T_{i}-}^{n}<n\}} \mathbb{1}_{\{T_{i}+V_{i}\leq t\}}.
\end{equation}
\item[(b)] \emph{Departure of customers promoted from the buffer.} The customers in this scenario start service at the departure time $D_i$ of some customer $i$. Observe that the number of customers at time $t$, who were promoted from the buffers and then departed from the system, consists of those customers who satisfy all of the following conditions:
\begin{enumerate}[label = (\roman*), wide, labelwidth=!, labelindent=0pt]
\item the service start time $D_i$ of this customer is {at or prior to} time $t$,
\item the buffer is non-empty at time $D_i-$, and
\item the customer has departed from the system at time $t$.
\end{enumerate}
For the customer promoted from buffer at time $D_i$, these conditions correspond to $\{D_i \leq t\}$, $\{Q_{D_i-}^{n} > 0\}$ or $\{\bar{Q}_{D_{i}-}^{n}>0\}$, and $\{ D_i+V_{j_i}\leq t\}$ respectively. Consequently, the number of customers at time $t$, who were promoted from the buffer and departed equals:
\begin{equation}\label{eq:b-d}
\sum_{i=1}^{\infty} \mathbb{1}_{\{ D_{i} \leq t\}} \mathbb{1}_{\{Q_{D_i-}^{n} > 0\}}  \mathbb{1}_{\{ D_{i}+V_{j_i}\leq t\}} .
\end{equation}
	\end{enumerate} 
{Therefore}, by combining the two groups of customers from \eqref{eq:d-d}-\eqref{eq:b-d} we have the cumulative departures at time $t$ satisfies
\begin{equation*}%\label{eq:N^nbuf}
D_{t}^{n}=\sum_{i=1}^{\infty} \mathbb{1}_{\{T_{i} \leq t\} } \mathbb{1}_{\{S_{T_{i}-}^{n}<n\}} \mathbb{1}_{\{V_{i}+T_{i}\leq t\}}+\sum_{i=1}^{\infty} \mathbb{1}_{\{Q_{D_i-}^{n} > 0\}} \mathbb{1}_{\{ D_{i} \leq t\}} \mathbb{1}_{\{ D_{i}+V_{j_i}\leq t\}} .
\end{equation*}
{On} scaling by $n$ we have
\begin{equation}\label{eq:D^nDef}
\bar{D}_{t}^{n}= \frac{1}{n}\sum_{i=1}^{\infty} \mathbb{1}_{\{D_{i} \leq t\}}= \frac{1}{n}\sum_{i=1}^{\infty} \mathbb{1}_{\{T_{i} \leq t\}} \mathbb{1}_{\{\bar{S}_{T_{i}-}^{n}<1\}}  \mathbb{1}_{\{V_{i}+T_{i} \leq t\}} + \frac{1}{n}\sum_{i=1}^{\infty} \mathbb{1}_{\{\bar{Q}_{D_{i}-}^{n}>0\}} \mathbb{1}_{\{D_{i}+V_{j_i} \leq t\}} .
\end{equation}    
\subsection{{Stochastic Integral Representation}.}
As in Section~\ref{sec:zero-buffer}, we will use random measures to obtain cleaner representations of the processes under consideration. To that effect, we define:
\begin{align*}
& W_n^{s,A}(t,u,x)=\frac{1}{n} \sum_{i=1}^{\infty}\mathbb{1}_{\{u \leq t\} } \mathbb{1}_{\{\bar{S}_{u-}^{n}<1\}} \mathbb{1}_{\{x >t-u\}} , 
&W_n^{s,D}(t,u,x)=\frac{1}{n} \sum_{i=1}^{\infty} \mathbb{1}_{\{ u \leq t\}}\mathbb{1}_{\{\bar{Q}_{u-}^{n}>0\}}  \mathbb{1}_{\{ x> t-u\}},\\
& W_n^{q,A}(t,u,x)=\frac{1}{n} \sum_{i=1}^{\infty} \mathbb{1}_{\{u \leq t\}}\mathbb{1}_{\{\bar{S}_{u-}^{n}=1\}} \mathbb{1}_{\{\bar{Q}_{u-}^{n}<\frac{b_n}{n}\}} ,
&W_n^{q,D}(t,u,x)=\frac{1}{n} \sum_{i=1}^{\infty} \mathbb{1}_{\{u \leq t\}} \mathbb{1}_{\{\bar{Q}_{u-}^{n}>0\}},\\
& W_n^{d, A}(t,u,x)=\frac{1}{n}\sum_{i=1}^{\infty} \mathbb{1}_{\{u \leq t\}}\mathbb{1}_{\{\bar{S}_{u-}^{n}<1\}}  \mathbb{1}_{\{x \leq t-u\}},
&W_n^{d,D}(t,u,x)=\frac{1}{n}\sum_{i=1}^{\infty} \mathbb{1}_{\{\bar{Q}_{u-}^{n}>0\}} \mathbb{1}_{\{x \leq t-u\}}.
\end{align*}
Using these notations, the relations \eqref{eq:rho^nDef}, \eqref{eq:y^nDef} and \eqref{eq:D^nDef} can be expressed as stochastic integrals
\begin{align}
&\bar{S}^n_t=\int_0^t\int_\R W_n^{s,A}(t,u,x) \mathcal{M}^{n,A} (du,dx)+\int_0^t\int_\R W_n^{s,D}(t,u,x) \mathcal{M}^{n,D} (du,dx),\label{eq:rho^n}\\
&\bar{Q}^n_t=\int_0^t\int_\R W_n^{q,A}(t,u,x) \mathcal{M}^{n,A} (du,dx)-\int_0^t\int_\R W_n^{q,D}(t,u,x) \mathcal{M}^{n,D} (du,dx),\label{eq:y^n}\\
&\bar{D}_{t}^{n}=\int_0^t\int_\R W_n^{d, A}(t,u,x) \mathcal{M}^{n,A} (du,dx)+\int_0^t\int_\R W_n^{d,D}(t,u,x) \mathcal{M}^{n,D} (du,dx),\label{eq:D^n}
\end{align}
where $\mathcal{M}^{n,A}$ is the counting measure associated with the marked point process of the arrival and service time pairs $(T_i,V_i)$, and $\mathcal{M}^{n,D}$ is the counting measure associated with the marked point process of the departure and service time pairs $(D_i,V_{j_i})$. Since the number of cumulative departures in $[0,t]$ is bounded by the number of arrivals in the same interval, the departure process is a locally finite point process. Recall Definition~\ref{subsec:intensity} and denote the intensity measure of the scaled departure process of the $n$-th model to be $\nu_n$. The following proposition shows that the scaled departure process exhibits an intensity or rate function.

\begin{proposition}\label{prop:densityD}
Let Assumption~\ref{asm:queue} hold. Then,
\begin{enumerate}[label = (\roman*), wide, labelwidth = !, labelindent = 0pt]
\item For every $n \in \N$, the intensity measure of the scaled departure process for the $n$-th model, $\nu_n$ is absolutely continuous w.r.t. Lebesgue measure. That is, there exists a density function $d_n$ for every $\nu_n$ such that 
$$
\E[\bar{D}^n_t]=\nu_n(0,t]=\int_0^td_n(u)du.
$$
\item There exists a bounded function $d$ on $[0,T]$ and a subsequence $(n_k)$ such that
\begin{equation*}%\label{eq:Ddconverge}
\lim_{k\rightarrow\infty}\sup_{t\in[0,T]}\left|\E[\bar{D}^{n_k}_t]-D_t\right|=0,
\end{equation*}
where $D_t=\int_0^td(u)du$. 
%is bounded.
\item Furthermore,
\begin{equation*}%\label{eq:dktodweak}
d_{n_k}\stackrel{*}{\rightharpoonup} d \quad \text{in } L^\infty[0,T].
\end{equation*}
\end{enumerate}

\end{proposition}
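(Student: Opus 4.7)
My plan is to prove (i) by computing $\E[\bar{D}^n_t]$ explicitly as a convolution of an absolutely continuous kernel against a measure, extracting $d_n = \frac{d}{dt}\E[\bar{D}^n_t]$, and establishing a uniform $L^\infty[0,T]$ bound. Given this uniform bound, part (iii) is an immediate application of the sequential Banach--Alaoglu theorem (\cite[Thm 2.34]{bressan2012lecture}), and (ii) follows by combining the weak-$*$ convergence with equicontinuity.

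For \textbf{(i)}, I would start from \eqref{eq:D^nDef} and split $\bar{D}^n_t = \bar{D}^{n,1}_t + \bar{D}^{n,2}_t$, the contributions of directly-admitted and previously-buffered customers. For the first, Campbell's formula (Theorem~\ref{thm:campbell}) applied to $\mathcal{M}^{n,A}$ with compensator $n\la(u)g(x)\,du\,dx$, together with predictability of $\mathbb{1}_{\{\bar{S}^n_{u-}<1\}}$, gives
$$\E[\bar{D}^{n,1}_t] = \int_0^t \PP(\bar{S}^n_{u-}<1)\,\la(u)\,G(t-u)\,du.$$
For the second piece the key observation is that the mark $V_{j_i}$ is, conditional on $\mathcal{F}^n_{D_i-}$, independent of the past with distribution $G$: $V_{j_i}$ is a fresh i.i.d.\ sample assigned at the earlier arrival time $T_{j_i}$, and the dynamics prior to $D_i$ do not use its value. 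Conditioning on $\mathcal{F}^n_{D_i-}$ inside the sum then yields
$$\E[\bar{D}^{n,2}_t] = \frac{1}{n}\int_0^t G(t-s)\,\pi_n(ds), \qquad \pi_n(ds) := \E\big[\mathbb{1}_{\{\bar{Q}^n_{s-}>0\}}\,dD^n_s\big],$$
with $\pi_n([0,T]) \leq \E[D^n_T] \leq \E[A^n_T] = n\,\Lambda(T)$ where $\Lambda(T) := \int_0^T \la(u)\,du$. Since $G(0)=0$ and $g \leq c_g$, a dominated-convergence argument differentiates both convolutions under the integral to produce
$$d_n(t) = \int_0^t \PP(\bar{S}^n_{u-}<1)\,\la(u)\,g(t-u)\,du + \frac{1}{n}\int_0^t g(t-u)\,\pi_n(du),$$
which is bounded uniformly in $n$ and $t \in [0,T]$ by $M := 2c_g\,\Lambda(T)$. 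The fundamental theorem of calculus then gives $\nu_n(0,t] = \E[\bar{D}^n_t] = \int_0^t d_n(u)\,du$, proving (i) and yielding uniform boundedness of $(d_n)$ in $L^\infty[0,T]$.

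For \textbf{(iii)}, the sequential Banach--Alaoglu theorem applied to the bounded sequence $(d_n) \subset L^\infty[0,T] = (L^1[0,T])^*$ (using separability of $L^1[0,T]$, exactly as in the proof of Proposition~\ref{prop:weaklimits0}) produces a subsequence $(n_k)$ and $d \in L^\infty[0,T]$ with $\|d\|_\infty \leq M$ such that $d_{n_k} \stackrel{*}{\rightharpoonup} d$. For \textbf{(ii)}, testing this weak-$*$ convergence against $\phi = \mathbb{1}_{[0,t]} \in L^1[0,T]$ yields pointwise convergence $\E[\bar{D}^{n_k}_t] \to D_t := \int_0^t d(u)\,du$ for each $t \in [0,T]$. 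Since every $\E[\bar{D}^n_\cdot]$ is $M$-Lipschitz and so is $D$, the family is equicontinuous on the compact interval $[0,T]$, and a standard $\varepsilon/3$ partition argument upgrades pointwise to uniform convergence.

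The hard part will be the careful treatment of the endogenous driving measure $\mathcal{M}^{n,D}$ in $\bar{D}^{n,2}$: unlike $\mathcal{M}^{n,A}$, its compensator lacks an explicit product form, so Campbell's formula does not apply directly. Once the conditional independence $V_{j_i} \perp \mathcal{F}^n_{D_i-}$ with $V_{j_i} \sim G$ is pinned down, the unknown promotion intensity is absorbed into the finite sub-probability measure $\pi_n$, and the trivial bound $\pi_n([0,T]) \leq n\Lambda(T)$ (promotions are a subset of arrivals) closes the uniform estimate and completes the argument.
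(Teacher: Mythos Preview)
Your proposal is correct and takes a genuinely different route from the paper's proof. For (i), the paper introduces the hazard rate $h=g/\bar G$ and, for each customer $k$, proves via a three-case conditioning argument that $D^{k,i,n}_t-\int_0^t B^{k,i,n}_u\,h(u-T_k')\,du$ is a martingale; summing and taking expectations yields $d_n(u)=\E\bigl[\tfrac1n\sum_{i,k}B^{k,i,n}_u\,h(u-T_k')\bigr]$, after which a \emph{second} computation (equations \eqref{eq:nuDnst}--\eqref{eq:Ak2}) is needed to extract the uniform Lipschitz bound $\nu_n(s,t]\le c_g(t-s)\Lambda(T)$. You bypass the hazard-rate machinery entirely: the convolution representation $\E[\bar D^{n}_t]=\int_0^t \PP(\bar S^n_{u-}<1)\,\la(u)\,G(t-u)\,du+\tfrac1n\int_0^t G(t-s)\,\pi_n(ds)$, combined with $G(0)=0$, $\lvert G'\rvert\le c_g$, and the crude mass bound $\pi_n([0,T])\le n\Lambda(T)$, delivers both the absolute continuity and the uniform $L^\infty$ bound on $d_n$ in one stroke. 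The price is having to justify the conditional-independence step $V_{j_i}\perp \mathcal F^n_{D_i-}$ with $V_{j_i}\sim G$, which you rightly flag as the crux; the paper's martingale argument never needs to isolate the mark $V_{j_i}$ but is considerably longer.

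For (ii)--(iii) you also invert the paper's order. The paper first invokes Helly's selection theorem for a pointwise subsequential limit $D$, then separately verifies absolute continuity of $D$ via the Lipschitz bound before extracting $d$ by Radon--Nikodym, and finally upgrades to weak-$*$ convergence by passing through step functions. Your route---Banach--Alaoglu first to obtain $d\in L^\infty$ directly, then test against $\mathbb{1}_{[0,t]}$ and use the common Lipschitz constant for the $\varepsilon/3$ upgrade to uniform convergence---is shorter and makes boundedness of $d$ automatic rather than something to be checked after the fact.
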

\begin{proof}
\emph{Part (i).}
For any $n$, denote the service start time of the $k$-th customer to be $T_k^{\prime}$. Define the departure process of the $k$-th customer from the $i$-th server by $D^{k,i,n}_t$ and the corresponding occupancy indicator {of the $i$-th server} $B^{k,i,n}_t$ as following:
\begin{equation*}
D^{k,i,n}_t=\mathbb{1}_{\{T_k^{\prime}+V_k\leq t\}}, \quad B^{k,i,n}_t=\mathbb{1}_{\{T_k^{\prime}\leq t < T_k^{\prime}+V_k\}}.
\end{equation*}
Define the hazard rate
$$
h(x) := \frac{g(x)}{1-G(x)}, \quad x \in[0, M)\quad \text{where} \quad M := \sup \{x \in[0, \infty): G(x)<1\} .
$$
Note that $h(u)$ is almost surely well-defined on $[0, V_k]$.
Let $\cf^{k,i}_t:=\sigma\{B^{k,i,n}_s,\, \text{for}\quad  0\leq s<t \}$. We claim that the process 
\begin{equation}\label{eq:Xmart}
X^{k,i,n}_t=D^{k,i,n}_t-\int_0^tB^{k,i,n}_uh(u-T_k^{\prime})du,\quad t\geq 0,
\end{equation}
is a martingale w.r.t. {$\cf^{k,i}_t$}. It suffices to consider the following elements of $\cf^{k,i}_s$ for $0\leq s<t$:
\begin{enumerate}[label = (\alph*), wide, labelwidth=!, labelindent=0pt]
\item $\{T_k^{\prime}=r,V_k=v\}$ for $r+v\leq s$, 
\item $\{T_k^{\prime}=r,T_k^{\prime}+V_k>s\}$ for $r\leq s$, {and}
\item $\{T_k^{\prime}>s\}$.
\end{enumerate}

\vspace{0.2in}
\noindent
(a) For $r+v\leq s$, we have
\begin{align}\label{eq:cE1}
&\E\lc X^{k,i,n}_t\mid T_k^{\prime}=r,V_k=v\rc\nonumber\\
&=\mathbb{P}\lp T_k^{\prime}+V_k\leq t\mid T_k^{\prime}=r,V_k=v\rp-\int_{{r}}^th(u-r)\mathbb{P}\lp  u<T_k^{\prime}+V_k \mid T_k^{\prime}=r,V_k=v\rp du\nonumber\\
&=1-\int_r^{r+v}h(u-r)du,
\end{align}
where the last expression is the value of $X^{k,i,n}_s$ on $\{T_k^{\prime}=r,V_k=v\}$. 

\vspace{0.2in}
\noindent
(b) For $r\leq s$ we have
\begin{align}\label{eq:condiE2}
&\E\lc X^{k,i,n}_t\mid T_k^{\prime}=r,T_k^{\prime}+V_k>s\rc\nonumber\\
&=\mathbb{P}\lp T_k^{\prime}+V_k\leq t\mid T_k^{\prime}=r,T_k^{\prime}+V_k>s\rp-\int_r^th(u-r)\mathbb{P}\lp u<T_k^{\prime}+V_k \mid T_k^{\prime}=r,T_k^{\prime}+V_k>s\rp du\nonumber\\
&=\frac{\mathbb{P}\lp s<T_k^{\prime}+V_k\leq t\mid T_k^{\prime}=r\rp}{\mathbb{P}\lp T_k^{\prime}+V_k>s\mid T_k^{\prime}=r\rp}-\int_r^{{t}}h(u-r)\frac{\mathbb{P}\lp T_k^{\prime}+V_k>u\vee s\mid T_k^{\prime}=r\rp}{\mathbb{P}\lp T_k^{\prime}+V_k>s\mid T_k^{\prime}=r\rp}du\nonumber\\
&=\frac{G(t-r)-G(s-r)}{1-G(s-r)}-\int_r^th(u-r)\frac{1-G(u\vee s-r)}{1-G(s-r)}du,
\end{align}
where using the definition of $h$ in the last integral 
\begin{multline}\label{eq:condiE2int}
\int_r^th(u-r)\frac{1-G(u\vee s-r)}{1-G(s-r)}du=\int_r^sh(u-r)du+\int_s^t \dfrac{g(u-r)}{1-G(s-r)} du \\
=\int_r^sh(u-r)du+\frac{G(t-r)-G(s-r)}{1-G(s-r)}.
\end{multline}
Therefore, plugging \eqref{eq:condiE2int} into \eqref{eq:condiE2} we have
\begin{equation}\label{eq:cE2}
\E\lc X^{k,i,n}_t\mid T_k^{\prime}=r,T_k^{\prime}+V_k>s\rc=-\int_r^sh(u-r)du,
\end{equation}
where the right hand side is the value of $X^{k,i,n}_s$ on $\{T_k^{\prime}=r,T_k^{\prime}+V_k>s\}$.

\vspace{0.2in}
\noindent
(c) Finally, consider 
\begin{align}\label{eq:condiE3}
\E&\lc X^{k,i,n}_t\mid T_k^{\prime}>s\rc=\mathbb{P}\lp T_k^{\prime}+V_k\leq t\mid T_k^{\prime}>s\rp-\E\lc\int_0^t h(u-T_k^{\prime})\mathbb{1} \lp T_k^{\prime}{\leq}u<T_k^{\prime}+V_k\rp du \,\middle|\, T_k^{\prime}>s\rc \nonumber\\
&=\frac{1}{\mathbb{P}\lp T_k^{\prime}>s\rp}\lp \mathbb{P}\lp T_k^{\prime}+V_k\leq t, T_k^{\prime}>s\rp-\E\lc\mathbb{1}_{\{T_k^{\prime}>s\}}\int_{T_k^{\prime}}^{\lp T_k^{\prime}+V_k\rp\wedge t}h(u-T_k^{\prime})du\rc \rp.
\end{align}
The last term of the numerator {in} \eqref{eq:condiE3} can be expressed as
\begin{align}\label{eq:condiE3last}
\E \lc\mathbb{1}_{\{T_k^{\prime}>s\}}\int_{T_k^{\prime}}^{\lp T_k^{\prime}+V_k\rp\wedge t}h(u-T_k^{\prime})du\rc&=\E\lc\mathbb{1}_{\{T_k^{\prime}>s\}}\E\lc\int_{T_k^{\prime}}^{\lp T_k^{\prime}+V_k\rp\wedge t}h(u-T_k^{\prime})du\,\middle|\, T_k^{\prime}\rc\rc\nonumber\\
&=\E\lc\mathbb{1}_{\{T_k^{\prime}>s\}}\E\lc\int_{0}^{\lp T_k^{\prime}+V_k\rp\wedge t-T_k^{\prime}}h(u)du\,\middle|\, T_k^{\prime}\rc\rc,
\end{align}
where {elementary integration yields:}
\begin{multline}\label{eq:condiE3log}
\E\lc\int_{0}^{\lp T_k^{\prime}+V_k\rp\wedge t-T_k^{\prime}}h(u)du\,\middle|\, T_k^{\prime}=r\rc
=\E\left[-\log \left\{1-G\left(\lp T_k^{\prime}+V_k\rp\wedge t-T_k^{\prime}\right)\right\}\mid T_k^{\prime}=r\right]\\
=G(t-r).
\end{multline}
Plugging \eqref{eq:condiE3log} into \eqref{eq:condiE3last} we have
\begin{equation}\label{eq:condiE3E=P}
\E\lc\mathbb{1}_{\{T_k^{\prime}>s\}}\int_{T_k^{\prime}}^{\lp T_k^{\prime}+V_k\rp\wedge t}h(u-T_k^{\prime})du\rc=\E\left[G(t-T_k^{\prime})\mathbb{1}_{\{T_k^{\prime}>s\}}\right]=\mathbb{P}\lp T_k^{\prime}+V_k\leq t, T_k^{\prime}>s\rp.
\end{equation}
Using \eqref{eq:condiE3E=P} in \eqref{eq:condiE3} we obtain
\begin{equation}\label{eq:cE3}
\E\lc X^{k,i,n}_t\mid T_k^{\prime}>s\rc=0,
\end{equation}
which is exactly the value of $X^{k,i,n}_s$ on $\{T_k^{\prime}>s\}$.
\vspace{0.2in}

Combining our conclusions from cases {(a)-(c)} given by relations  \eqref{eq:cE1}, \eqref{eq:cE2} and \eqref{eq:cE3} we can conclude that 
\begin{equation*}%\label{eq:martingaleX}
\E\lc X^{k,i,n}_t\mid \cf_s\rc=X^{k,i,n}_s.
\end{equation*}
This proves our claim that $X_t^{k,i,n}$ given by \eqref{eq:Xmart} is a martingale w.r.t. {$\cf^{k,i}_{t}$}. Consequently we have 
\begin{equation}\label{eq:nuDn0t}   \nu_n(0,t]=\E\left[\bar{D}^n_t\right]=\E\left[\frac{1}{n}\sum_{i,k}D^{k,i,n}_t\right]=\E\left[\frac{1}{n}\sum_{i,k}\int_0^tB^{k,i,n}_uh(u-T_k^{\prime})du\right].
\end{equation}
Since our integrands are non-negative, by Tonelli's theorem, we interchange expectation and integral to obtain
\begin{equation}\label{eq:nunE}
	\nu_n(0,t]=\int_0^t\E\left[\frac{1}{n}\sum_{i,k}B^{k,i,n}_uh(u-T_k^{\prime})\right]du.
\end{equation}
This implies that $\nu_n$ is absolutely continuous w.r.t. Lebesgue measure. {Denoted $$d_n(u)=\E\left[\frac{1}{n}\sum_{i,k}B^{k,i,n}_uh(u-T_k^{\prime})\right],$$ be the intensity function in \eqref{eq:nunE}. }

\vspace{0.1in}
\noindent
\emph{Part (ii).}
Since $d_n$ are non-negative, and
\begin{equation}\label{eq:Dbd}
\int_0^t d_n(u)du=\E\left[\bar{D}_t^n\right] \leq \int_0^T\lambda(u)du <\infty,
\end{equation}
are uniformly bounded, 
by Helly's selection theorem there exists a bounded non-decreasing function $D$ and a subsequence $(n_k)$ such that the pointwise convergence $\int_0^td_{n_k}(u)du\rightarrow D_t$ holds. Furthermore, since $D$ is continuous, then the convergence is uniform (see for example \cite[Sec 0.1]{resnick2008extreme}). It remains to show that $D_t$ is absolute continuous with a non-negative density $d$.
%and $d_{n_k} \rightarrow d$ uniformly. 
Since $\sum_iB^{k,i,n}_t=\mathbb{1}_{\{T_k^{\prime}\leq t < T_k^{\prime}+V_k\}}$, from \eqref{eq:nuDn0t} we have for any $n$
\begin{equation*}
\nu_n(s,t]=\E\left[\frac{1}{n}\sum_{i,k}\int_s^tB^{k,i,n}_uh(u-T_k^{\prime})du\right]=\E\left[\frac{1}{n}\sum_{k}\int_s^t\mathbb{1}_{\{T_k^{\prime}\leq u < T_k^{\prime}+V_k\}}h(u-T_k^{\prime})du\right].
\end{equation*}
Therefore, 
\begin{align*}
\nu_n(s,t]&=\frac{1}{n}\E\left[\sum_{k}\int_{T_k^{\prime}\vee s}^{\lp T_k^{\prime}+V_k\rp\wedge t}h(u-T_k^{\prime})du\right]=\frac{1}{n}\E\left[\sum_{k}\E\left[\int_{\lp T_k^{\prime}\vee s\rp-T_k^{\prime}}^{\lp T_k^{\prime}+V_k\rp\wedge t-T_k^{\prime}}h(u)du\mid T_k^{\prime}\right]\right]\\
&=\frac{1}{n}\E\left[\sum_{k}\E\left[\log \left(1-G\left(\left(s-T_k^{\prime}\right)\vee 0\right)\right)-\log \left(1-G\left(\left(T_k^{\prime}+V_k\right)\wedge t- T_k^{\prime}\right)\right)\mid T_k^{\prime}\right]\right].
\end{align*}
Using \eqref{eq:condiE3log} the above equation becomes
\begin{equation}\label{eq:nuDnst}
\nu_n(s,t]=\frac{1}{n}\E\left[\sum_{k}\log \left(1-G\left(\left(s-T_k^{\prime}\right)\vee 0\right)\right)+G(t-T_k^{\prime})\right].
\end{equation}
Denote $A_k=\log (1-G((s-T_k^{\prime})\vee 0))+G(t-T_k^{\prime})$. Recall in Assumption~\ref{asm:queue} that $g(x) \leq c_g$. For $s<T_k^{\prime}$ we have
\begin{equation}\label{eq:Ak1}
A_k=\log(1-G(0))+G(t-T_k^{\prime})\leq G(t-T_k^{\prime})\leq G(t-s) \leq c_g (t-s),
\end{equation}
where the last inequality follows from mean value theorem. On the other hand, since $\log(x)\leq x-1$, for $s\geq T_k^{\prime}$ we have
\begin{equation}\label{eq:Ak2}
A_k=\log (1-G(s-T_k^{\prime}))+G(t-T_k^{\prime})\leq -G(s-T_k^{\prime})+G(t-T_k^{\prime})\leq c_g(t-s).
\end{equation}
Combining \eqref{eq:nuDnst}-\eqref{eq:Ak2} we obtain
\begin{equation*}
\nu_n(s,t]\leq \frac{1}{n}\E\left[\sum_{k=1}^{\infty} \mathbb{1}_{\{T_k^{\prime} \leq t\}}\right]c_g(t-s),
\end{equation*}
%where $|T_k^{\prime}|$ denote the number of $T_k^{\prime}$ that occur by time $t$. 
Since $\E[\sum_{k=1}^{\infty} \mathbb{1}_{\{T_k^{\prime} \leq t\}}]\leq \E[A^n_t]\leq \int_0^t n\la(u)du$, we have for any $n$,
\begin{equation}\label{eq:nueqcon}
\nu_n(s,t]\leq c_g(t-s)\int_0^T \la(u)du.
\end{equation}
Therefore
\begin{equation}\label{eq:Dt-Ds}
D_t-D_s\leq c_g(t-s)\int_0^T \la(u)du.
\end{equation}
By \eqref{eq:Dt-Ds}, for $\varepsilon>0$ there exists $\delta=\varepsilon/{(c_g\int_0^T\la(u)du)}$ such that for any finite set of disjoint intervals $\left(a_1, b_1\right), \ldots,\left(a_K, b_K\right)$ satisfying $\sum_{j=1}^K\left(b_j-a_j\right)<\delta$,
\begin{equation*}
	\sum_{j=1}^K\left|D_{b_j}-D_{a_j}\right|\leq c_g\int_0^T \la(u)du\sum_{j=1}^K(b_j-a_j)<\varepsilon.
\end{equation*}
By \cite[Prop 3.32]{folland1999real}, we conclude that $D$ is absolutely continuous w.r.t. Lebesgue measure. By Radon–Nikodym theorem there exists a density function $d$ such that $D_t=\int_0^t d(u)du$. {Finally, notice from \eqref{eq:nueqcon}-\eqref{eq:Dt-Ds} we know that $d_{n_k}$ and $d$ are bounded by $c_g\int_0^T\la(u)du$.} This completes the proof of \emph{Part (ii)}. 

\vspace{0.1in}
\noindent
\emph{Part (iii).}
 %, and thus uniformly integrable. 
{The convergence} ${\int_0^td_{n_k}(u)du} \rightarrow D_t$ implies that for any $0\leq s<t\leq T$ we have 
\begin{equation*}
\lim_{k\rightarrow\infty}\int_s^t d_{n_k}(u)du=\lim_{k\rightarrow\infty}\int_0^t \mathbb{1}_{[s,t)} d_{n_k}(u)du=\int_0^t\mathbb{1}_{[s,t)}d(u)du.
\end{equation*}
This can be extended to any step function $q$ {to give}
\begin{equation*}
\lim_{k\rightarrow\infty}\int_0^t q(u)d_{n_k}(u)du=\int_0^tq(u)d(u)du.
\end{equation*}
Since step functions are dense in $L^1$, we can conclude that for any $\phi\in L^1[0,T]$ 
\begin{equation*}
\lim_{k\rightarrow\infty}\int_0^t \phi(u)d_{n_k}(u)du=\int_0^t\phi(u)d(u)du.
\end{equation*}
\end{proof}
Thanks to the existence of the density functions $d_n$ from Proposition~\ref{prop:densityD}, we can characterize the intensities of the random measures under consideration.
\begin{lemma}\label{lem:intensity}
	Let Assumption~\ref{asm:queue} hold. The intensity measures of the marked point processes $\mathcal{M}^{n,A}$ and $\mathcal{M}^{n,D}$ are 
	\begin{align}\label{eq:intensity}
		\mathcal{M}^{n,A}_c(du,dx)=\E\left[\mathcal{M}^{n,A}(du,dx)\right]=n\lambda(u)g(x)dudx, \nonumber\\ \mathcal{M}^{n,D}_c(du,dx)=\E\left[\mathcal{M}^{n,D}(du,dx)\right]={n}d_n(u)g(x)dudx.
	\end{align}
\end{lemma}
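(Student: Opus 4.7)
The plan is to verify each of the two identities in \eqref{eq:intensity} by computing expectations of integrals against arbitrary nonnegative test functions and matching them against what Campbell's formula (Theorem~\ref{thm:campbell}) would give for the claimed intensity. In effect, what has to be shown is that the time-marginal intensity is $n\lambda$ (resp.\ $nd_n$) and that the mark $V_i$ (resp.\ $V_{j_i}$) is conditionally independent of its time component with density $g$.

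For $\mathcal{M}^{n,A}$ the argument is essentially immediate. By Assumption~\ref{asm:queue}(i) the arrival process $A^n$ is an NHPP with rate $n\lambda(\cdot)$, so the mean measure of the time component is $n\lambda(u)\,du$. By Assumption~\ref{asm:queue}(ii) the service times $V_i$ are i.i.d.\ with density $g$ and independent of the arrival stream. For any nonnegative measurable $\varphi$, conditioning on the arrival times and using the tower property,
\begin{align*}
\mathbb{E}\!\left[\sum_{i\ge 1}\varphi(T_i,V_i)\right]
&=\mathbb{E}\!\left[\sum_{i\ge 1}\int_{\mathbb{R}}\varphi(T_i,x)g(x)\,dx\right]
=\int_0^{\infty}\!\!\int_{\mathbb{R}}\varphi(u,x)\,n\lambda(u)g(x)\,dx\,du,
\end{align*}
which by definition of the mean measure identifies $\mathcal{M}^{n,A}_c$ with $n\lambda(u)g(x)\,du\,dx$.

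For $\mathcal{M}^{n,D}$ the time-marginal intensity is the content of Proposition~\ref{prop:densityD}: since $\mathbb{E}[\bar{D}^n_t]=\int_0^t d_n(u)\,du$, the unscaled counting process $D^n$ has mean measure $nd_n(u)\,du$ on $\mathbb{R}$. It then remains to argue that the mark $V_{j_i}$ attached to the $i$-th departure epoch has density $g$ and is independent of $D_i$. This is where some care is needed: the index $j_i$ is random and depends on the history of the system, so a priori $V_{j_i}$ could be correlated with $D_i$. However, under FCFS promotion from the buffer, $j_i$ is determined by the arrival order of the customers present in the buffer at time $D_i-$, hence by quantities that involve only past arrival times and past service completions of other customers. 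Because each customer's service time is drawn independently of all arrival times and of all other service times, the selection rule does not use the value of $V_{j_i}$ itself; conditionally on $\mathcal{F}^n_{D_i-}$ (and in particular on $D_i$), $V_{j_i}$ is distributed as $G$. Combining this with the tower-property computation above yields
\[
\mathbb{E}\!\left[\sum_{i\ge 1}\varphi(D_i,V_{j_i})\right]
=\int_0^{\infty}\!\!\int_{\mathbb{R}}\varphi(u,x)\,n d_n(u)g(x)\,dx\,du,
\]
which gives the second identity in \eqref{eq:intensity}.

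I expect the verification of the time-marginal intensity via Proposition~\ref{prop:densityD} and the i.i.d.-Poisson decomposition of arrivals to be routine; the only real obstacle is the independence claim for the departure marks, i.e.\ ensuring that the random indexing $i\mapsto j_i$ does not introduce a bias in the distribution of $V_{j_i}$ given $D_i$. Arguing this cleanly requires noting that the matching between a departure epoch and the service time attached to the promoted customer uses only the arrival order and the $\sigma$-field generated by service times of already-served customers, and hence is measurable with respect to information that is independent of the fresh service time $V_{j_i}$.
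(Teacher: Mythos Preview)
Your approach is correct and essentially identical to the paper's: compute expectations on test functions (the paper uses product indicators $\mathbb{1}_C(D_i)\mathbb{1}_L(V_{j_i})$ on rectangles $C\times L$), invoke independence of the mark $V_{j_i}$ from the departure epoch $D_i$, and appeal to Proposition~\ref{prop:densityD} for the time-marginal intensity $nd_n$. Your treatment of the independence claim is in fact more careful than the paper's, which simply asserts ``$D_i$ and $V_{j_i}$ are independent'' without further comment.
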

\begin{proof}
	The first part is trivial and has already been utilized in Section~\ref{sec:zero-buffer}. We {show} only for $\mathcal{M}^{n,D}$ here. Observe that
	\begin{equation*}
		\mathcal{M}^{n,D}(C\times L)=\sum_i^\infty \mathbb{1}_{C}(D_i)\mathbb{1}_L(V_{j_i}),
	\end{equation*}
	and hence
	\begin{equation*}
		\mathcal{M}_c^{n,D}(C\times L)=\E\left[\mathcal{M}^{n,D}(C\times L)\right]=\sum_i^\infty\E\left[\mathbb{1}_{C}(D_i)\mathbb{1}_L(V_{j_i})\right].
	\end{equation*}
	Note that $D_i$ and $V_{j_i}$ are independent. Thus we have
	\begin{equation}\label{eq:Mccxl}
		\mathcal{M}_c^{n,D}(C\times L)=\sum_{i=1}^\infty\E\left[\mathbb{1}_{C}(D_i)\right]\E\left[\mathbb{1}_L(V_{j_i})\right]=\E\left[\sum_{i=1}^\infty \mathbb{1}_{C}(D_i)\right]\mathbb{P}(V_{j_1}\in L)={n}\nu_n(C)\int_{L}g(x)dx.
	\end{equation}
	 Applying Proposition~\ref{prop:densityD} to \eqref{eq:Mccxl} we obtain that 
\begin{equation*}
\mathcal{M}_c^{n,D}(C\times L)=\int_{C\times L}{n}d_n(u)g(x)dudx,
\end{equation*}
which proves our desired result.
	\end{proof}
We now exploit the intensities obtained in Lemma~\ref{lem:intensity} to obtain the limit of the stochastic processes $(\bar{S}^n, \bar{Q}^n, \bar{D}^n)$ as $n$ goes to infinity. We again begin with a result proving convergence along a subsequence.
\begin{proposition}\label{prop:weaklimits}
	Let Assumption~\ref{asm:queue} hold. Assume that the system starts empty, that is, the number of customers at time $0$ is zero. Then
\begin{enumerate}[label = (\roman*), wide, labelwidth = !, labelindent = 0pt]
\item For any $T > 0$ and for any subsequence, there exists a further subsequence $(r_k)$ and continuous, {possibly stochastic} processes $\rho,\eta,D$ such that almost surely{
	\begin{equation}\label{eq:limitsub}
\bar{S}^{r_k}_t\rightarrow\rho_t,\quad \bar{Q}^{r_k}_t\rightarrow\eta_t, \quad\bar{D}_t^{r_k}\rightarrow D_t,
	\end{equation} 
in the uniform topology.}
\item {Moreover, given $(r_k)$, almost surely there exist bounded, possibly stochastic processes $w^{1}, w^{2}, w^{3}$ such that }
\begin{equation}\label{eq:rhoydweak*w}
\mathbb{1}_{\{\bar{S}^{r_k}_{t-}<1\}}\stackrel{*}{\rightharpoonup}w^1(t), \quad 
\mathbb{1}_{\{\bar{Q}^{r_k}_{t-}>0\}}\stackrel{*}{\rightharpoonup}w^2(t), \quad 
\mathbb{1}_{\{\bar{Q}^{r_k}_{t-}<\frac{b_n}{n}\}}\stackrel{*}{\rightharpoonup}w^3(t), \quad \text{in } L^\infty[0,T].
\end{equation}
\item {Furthermore, almost surely, $(\rho,\eta,D,w^1,w^2,w^3)$ defined in \eqref{eq:limitsub}-\eqref{eq:rhoydweak*w} satisfy}
		\begin{align}
		&\rho_{t}=\int_{0}^{t} w^{1}(u) \bar{G}(t-u) \lambda(u) d u  +\int_{0}^{t} w^{2}(u) \bar{G}(t-u) d(u) d u, \label{eq:rhow}\\
		& \eta_{t}=\int_{0}^{t} (1-w^1(u))w^{3}(u) \lambda(u) d u  -\int_{0}^{t} w^{2}(u) d(u) d u, \label{eq:yw}\\
		& D_t=\int_{0}^{t}w^{1}(u) G(t-u) \lambda(u) d u  +\int_{0}^{t} w^{2}(u) G(t-u) d(u) d u,\label{eq:dw}	
\end{align}
and for almost every $t\in[0,T]$
\begin{equation*}
\mathbb{1}_{\{\rho_t<1\}} \leq w^1(t) \leq 1, \quad
\mathbb{1}_{\left\{\eta_{t}>0\right\}} \leq w^2(t) \leq 1, \quad
\mathbb{1}_{\{\eta_t<\beta\}}\leq w^3(t) \leq 1.
\end{equation*}
That is, {for almost all $\om\in\Omega$,} $(\rho{(\om)},\eta{(\om)},D,w^1{(\om)},w^2{(\om)},w^3{(\om)})$ as in \eqref{eq:rhow}-\eqref{eq:dw} is a solution, interpreted according to Definition~\ref{def:sol_Vol}, to the following non-linear discontinuous Volterra integral equation
\begin{align}\label{eq:rhoVoln}
		&\rho_{t}=\int_{0}^{t} \mathbb{1}_{\{\rho_{u-}<1\}} \bar{G}(t-u) \lambda(u) d u  +\int_{0}^{t} \mathbb{1}_{\{\eta_{u-}>0\}} \bar{G}(t-u) d(u) d u, \nonumber\\
		& \eta_{t}=\int_{0}^{t} \mathbb{1}_{\{\rho_{u-}=1\}} \mathbb{1}_{\{\eta_{u-}<\beta\}} \lambda(u) d u  -\int_{0}^{t} \mathbb{1}_{\{\eta_{u-}>0\}} d(u) d u, \nonumber\\
		& D_t=\int_{0}^{t} \mathbb{1}_{\{\rho_{u-}<1\}} G(t-u) \lambda(u) d u  +\int_{0}^{t} \mathbb{1}_{\{\eta_{u-}>0\}} G(t-u) d(u) d u.
	\end{align}

\end{enumerate}
\end{proposition}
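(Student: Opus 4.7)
The proof follows the template of Proposition~\ref{prop:weaklimits0}, now lifted to the three coupled processes $(\bar{S}^n,\bar{Q}^n,\bar{D}^n)$ and the additional counting measure $\mathcal{M}^{n,D}$ supplied by Lemma~\ref{lem:intensity}. The plan is to decompose each stochastic integral in \eqref{eq:rho^n}--\eqref{eq:D^n} into its compensator plus a zero-mean martingale via $\mathcal{M}^{n,A}_*=\mathcal{M}^{n,A}-\mathcal{M}^{n,A}_c$ and $\mathcal{M}^{n,D}_*=\mathcal{M}^{n,D}-\mathcal{M}^{n,D}_c$. For instance, $\bar{S}^n_t=X^{n,S}_t+Y^{n,S}_t$ with $Y^{n,S}_t=\int_0^t\mathbb{1}_{\{\bar{S}^n_{u-}<1\}}\bar{G}(t-u)\la(u)du+\int_0^t\mathbb{1}_{\{\bar{Q}^n_{u-}>0\}}\bar{G}(t-u)d_n(u)du$, and analogously for $\bar{Q}^n$ and $\bar{D}^n$. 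The martingale terms $X^{n,\cdot}$ have second moments controlled by Campbell's formula: since the intensity of $\mathcal{M}^{n,D}$ is $nd_n(u)g(x)$ and Proposition~\ref{prop:densityD} gives $\sup_n\|d_n\|_\infty\leq c_g\int_0^T\la$, each variance is $O(1/n)$, so $X^{n,\cdot}\xrightarrow{p}0$ uniformly as in \eqref{eq:Xucp}.

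For the drift parts $Y^{n,\cdot}$, the local integrability of $\la$, the uniform boundedness of $d_n$, and the boundedness/monotonicity of $\bar{G}$ and $G$ together give equicontinuity mirroring \eqref{eq:w'ga}. Combined with the piecewise-constant structure of $\bar{S}^n,\bar{Q}^n,\bar{D}^n$ (almost surely finitely many jumps in $[0,T]$, so $w'_{\bar{S}^n}(\delta)=0$ etc.), Theorem~\ref{thm:tightness} yields joint tightness, and on a subsequence $(r_k)$ I obtain a.s.\ uniform convergence to continuous limits $(\rho,\eta,D)$. Simultaneously, by \cite[Thm.~2.34]{bressan2012lecture} applied path by path, I extract along a further subsequence weak-$*$ limits $w^1,w^2,w^3\in L^\infty[0,T]$ realizing \eqref{eq:rhoydweak*w}. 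To handle the coupling through $d_n$, I additionally extract weak-$*$ limits of the uniformly bounded products $\mathbb{1}_{\{\bar{Q}^{r_k}_{u-}>0\}}d_{r_k}(u)$ and $\mathbb{1}_{\{\bar{Q}^{r_k}_{u-}<b_n/n\}}d_{r_k}(u)$; since $d_{r_k}\stackrel{*}{\rightharpoonup}d$ by Proposition~\ref{prop:densityD}, a Radon--Nikodym argument against the absolutely continuous measure $d(u)du$ identifies these limits as $w^2(u)d(u)$ and $w^3(u)d(u)$ with $0\leq w^2,w^3\leq 1$. Passing to the weak-$*$ limit in the drift expressions for $\bar{S}^n$ and $\bar{D}^n$ then yields \eqref{eq:rhow} and \eqref{eq:dw} directly.

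The main obstacle is equation \eqref{eq:yw}, whose compensator contains the product $\mathbb{1}_{\{\bar{S}^n_{u-}=1\}}\mathbb{1}_{\{\bar{Q}^n_{u-}<b_n/n\}}\la(u)$, and weak-$*$ limits of products of indicators are not in general products of weak-$*$ limits. I would resolve this by exploiting the physical inclusion $\{\bar{S}^n_{u-}<1\}\subset\{\bar{Q}^n_{u-}=0\}\subset\{\bar{Q}^n_{u-}<b_n/n\}$ (a free server forces an empty buffer), which gives the pointwise linear rewriting
$$\mathbb{1}_{\{\bar{S}^n=1\}}\mathbb{1}_{\{\bar{Q}^n<b_n/n\}}=\mathbb{1}_{\{\bar{Q}^n<b_n/n\}}-\mathbb{1}_{\{\bar{S}^n<1\}},$$
whose weak-$*$ limit is simply $w^3-w^1$. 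The same inclusion gives $\mathbb{1}_{\{\bar{S}^n<1\}}\mathbb{1}_{\{\bar{Q}^n<b_n/n\}}=\mathbb{1}_{\{\bar{S}^n<1\}}$, which upon passing to the limit fixes the consistent identification $w^1 w^3\equiv w^1$ and hence $w^3-w^1=(1-w^1)w^3$, giving the form stated in \eqref{eq:yw}. The departure term in \eqref{eq:yw} is handled by the same $\mathbb{1}_{\{\bar{Q}>0\}}d_n\rightharpoonup w^2 d$ argument as above.

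Finally, to verify the constraints in Part~(iii), I would follow the last step of Proposition~\ref{prop:weaklimits0}: using uniform convergence $\bar{S}^{r_k}\to\rho$, $\bar{Q}^{r_k}\to\eta$ and the discreteness of prelimit values $\bar{S}^n\in\{i/n\}$, $\bar{Q}^n\in\{j/n\}$, for each $\varepsilon>0$ and $k$ large I obtain sandwich inequalities $\mathbb{1}_{\{\rho_{u-}<1-\varepsilon\}}\leq\mathbb{1}_{\{\bar{S}^{r_k}_{u-}<1\}}\leq\mathbb{1}_{\{\rho_{u-}<1+\varepsilon\}}$ and analogous bounds for $\eta$ around the thresholds $0$ and $\beta$. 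Testing against arbitrary non-negative $\phi\in L^1[0,T]$, letting $k\to\infty$ by weak-$*$ convergence and then $\varepsilon\downarrow 0$ via dominated convergence, gives $\mathbb{1}_{\{\rho_{u-}<1\}}\leq w^1\leq 1$, $\mathbb{1}_{\{\eta_{u-}>0\}}\leq w^2\leq 1$, $\mathbb{1}_{\{\eta_{u-}<\beta\}}\leq w^3\leq 1$ almost everywhere. Translating these bounds into the $(\underline{\mathbb{1}},\bar{\mathbb{1}})$ notation of \eqref{eq:g-bar} confirms via Definition~\ref{def:sol_Vol} that $(\rho,\eta,D,w^1,w^2,w^3)$ solves the coupled discontinuous Volterra system \eqref{eq:rhoVoln}.
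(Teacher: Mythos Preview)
Your approach mirrors the paper's: decompose each stochastic integral into compensator plus martingale via $\mathcal{M}^{n,\cdot}_*$, kill the martingale terms in $L^2$ using Campbell's formula and the intensity bounds from Proposition~\ref{prop:densityD}, establish equicontinuity of the compensators, extract weak-$*$ limits of the indicator processes, and close with sandwich inequalities. Your linearization of the product indicator via the physical inclusion $\{\bar{S}^n_{u-}<1\}\subset\{\bar{Q}^n_{u-}<b_n/n\}$, giving $\mathbb{1}_{\{\bar{S}^n=1\}}\mathbb{1}_{\{\bar{Q}^n<b_n/n\}}=\mathbb{1}_{\{\bar{Q}^n<b_n/n\}}-\mathbb{1}_{\{\bar{S}^n<1\}}$ and hence weak-$*$ limit $w^3-w^1$, is in fact cleaner than the paper's one-line appeal to ``a diagonalization argument''.

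There is, however, a logical gap in your last identification. From the pointwise identity $\mathbb{1}_{\{\bar{S}^n<1\}}\mathbb{1}_{\{\bar{Q}^n<b_n/n\}}=\mathbb{1}_{\{\bar{S}^n<1\}}$ you cannot deduce $w^1w^3=w^1$ by ``passing to the limit'': weak-$*$ convergence does not commute with pointwise products, so the asserted equality $w^3-w^1=(1-w^1)w^3$ does not follow from that step. What your argument rigorously delivers is $\eta_t=\int_0^t(w^3(u)-w^1(u))\lambda(u)\,du-\int_0^tw^2(u)d(u)\,du$; matching the stated form $(1-w^1)w^3$ would require $w^1(1-w^3)=0$ a.e., which needs a separate justification (and which your argument does not supply). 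A second, minor point: your ``Radon--Nikodym argument'' for identifying the weak-$*$ limit of $\mathbb{1}_{\{\bar{Q}^{r_k}>0\}}d_{r_k}$ with $w^2d$ is left vague; the paper instead uses a triangle-inequality splitting
\[
\Big|\int_0^t\mathbb{1}_{\{\bar{Q}^{r_k}>0\}}\bar{G}(t-u)\big(d_{r_k}(u)-d(u)\big)\,du\Big|
+\Big|\int_0^t\big(\mathbb{1}_{\{\bar{Q}^{r_k}>0\}}-w^2(u)\big)\bar{G}(t-u)d(u)\,du\Big|
\]
and treats each piece by the respective weak-$*$ convergence of $d_{r_k}$ and of the indicator.
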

\begin{proof}
For simplicity we will consider the initial subsequence to be $(n)$, but the arguments below go through for any initial subsequence.

\vspace{0.1in}
\noindent
		{\emph{Part (i).}} We prove only the results for $\bar{S}^n$ and $\rho$ as the other parts are similar. By Campbell's formula and Lemma~\ref{lem:intensity} we have for a fixed $t \in[0,T]$, for all measurable functions $W_n(t,u,x):\R \times \R \rightarrow \R $ 
	\begin{align}\label{eq:campbellD}
		\E\left[\int_0^t\int_\R W_n(t,u,x) \mathcal{M}^{n,A} (du,dx)\right]=\int_0^t\int_\R W_n(t,u,x) n\lambda(u)g(x)dudx,\nonumber\\%\label{eq:campbellA}\\
		\E\left[\int_0^t\int_\R W_n(t,u,x) \mathcal{M}^{n,D} (du,dx)\right]=\int_0^t\int_\R W_n(t,u,x) {n}d_n(u)g(x)dudx.
	\end{align}
	Denote $\mathcal{M}^{n,A}_*$ and $\mathcal{M}^{n,D}_*$ to be the compensated random measures:
	\begin{equation}\label{eq:M-comp2}
		\mathcal{M}^{n,A}_*=\mathcal{M}^{n,A}-\mathcal{M}^{n,A}_c, \quad \mathcal{M}^{n,D}_*=\mathcal{M}^{n,D}-\mathcal{M}^{n,D}_c,
	\end{equation}
	where $\mathcal{M}^{n,A}_c$ and $\mathcal{M}^{n,D}_c$ are as defined in \eqref{eq:intensity}. 

\noindent
\emph{Arrivals affecting number in service}: We first investigate the stochastic integrals with respect to the random measure $\mathcal{M}^{n,A}$. By the decomposition \eqref{eq:M-comp2} and Lemma~\ref{lem:intensity}, the first term in \eqref{eq:rho^n} becomes:
\begin{equation}\label{eq:rhonA}
\bar{S}^{n,A}_t:=X_t^{s,n,A}+Y_t^{s,n,A},
	\end{equation}
where
\begin{equation*}
X_t^{s,n,A}:=\int_0^t\int_\R W_n^{s,A}(t,u,x) \mathcal{M}^{n,A}_* (du,dx),\quad\text{and} \quad Y_t^{s,n,A}:=\int_{0}^{t} \mathbb{1}_{\{\bar{S}^n_{u-}<1\}} \bar{G}(t-u) \lambda(u) d u.
\end{equation*}
We can follow the same argument as in the proof of Proposition~\ref{prop:weaklimits0} to conclude similar to how we obtained \eqref{eq:rho0weakstar} that for any subsequence $(l_k)$, there exists a subsubsequence $(r_k) \subset(l_k)$, such that for any $\phi \in L^1[0,T]$ there exists $w^1(u) \in L^\infty[0,T]$ and almost surely
	\begin{equation}\label{eq:rhoweakstar}
		\lim _{k \rightarrow \infty} \int_0^t \phi(u) \mathbb{1}_{\{\bar{S}^{r_k}_{u-}<1\}} d u=\int_0^t \phi(u) w^1(u) d u.
	\end{equation}
Furthermore, using similar arguments to how we obtained \eqref{eq:rho} we get almost surely
	\begin{equation}\label{eq:rhoA}
		\bar{S}_t^{r_k,A}=X_t^{s,r_k, A}+Y_t^{s,r_k,A}\rightarrow \int_0^t w^{1}(u) \bar{G}(t-u) \lambda(u) d u:=Y^{s,A}.
	\end{equation}
in the uniform topology.

\noindent
	\emph{Departures affecting number in service}: In this part we look at the stochastic integrals with respect to the random measure $\mathcal{M}^{n,D}$ in \eqref{eq:rho^n}. Similar to \eqref{eq:rhonA} we have
	\begin{equation}\label{eq:rhoDXga}
		\bar{S}^{n,D}_t=X_t^{s,n,D}+Y_t^{s,n,D},
	\end{equation}
where
\begin{equation}\label{eq:Xgarho2}
X_t^{s,n,D}=\int_0^t\int_\R W_n^{s,D}(t,u,x) \mathcal{M}^{n,D}_* (du,dx)\quad\text{and}\quad Y_t^{s,n,D}=\int_{0}^{t} \mathbb{1}_{\{\bar{Q}^n_{u-}>0\}} \bar{G}(t-u) d_n(u) d u.
\end{equation}

\vspace{0.1in}
\noindent
We first analyze the term $Y^{s,n,D}$. Since the cumulative departures are upper bounded by the cumulative arrivals, by \eqref{eq:Xgarho2} and the integrability of $\lambda$ we have
\begin{equation}\label{eq:YDbd}
    Y_t^{s,n,D}\leq \int_0^td_n(u)du\leq\int_0^t\lambda(u)du<\infty.
\end{equation}
Note that
$$
Y_t^{s,n,D} - Y_s^{s,n,D} = \int_s^t \mathbb{1}_{\{\bar{Q}_{u-}^{n} > 0\}} \bar{G}(t-u)  d_n(u) du + \int_0^s \mathbb{1}_{\{\bar{Q}_{u-}^{n} > 0\}} \lp \bar{G}(t-u) - \bar{G}(s-u) \rp  d_n(u) du.
$$
Since $\bar{G}$ is non-increasing {and bounded above by $1$}, we have
$$
\sup_n\left| Y_t^{s,n,D} - Y_s^{s, n, D} \right| \leq \sup_n\int_s^t d_n(u) du \leq c_g(t-s)\int_0^T\lambda(u)du,
$$
where the last inequality follows from \eqref{eq:nueqcon}.
	This Lipschitz continuity implies that $Y_t^{s,n,D}$ is equicontinuous. Therefore we have
	\beq\label{eq:w'gaD}  
	\lim_{\delta \downarrow 0} \sup_n w'_{Y^{s,n,D}}(\delta) = 0.
	\eeq  
By \eqref{eq:YDbd}, \eqref{eq:w'gaD}, Theorem~\ref{thm:tightness} and Prokhorov's theorem we can conclude that there exists $Y^{s,D} \in \mathbb{D}$ and a subsequence $(n_k)$ such that almost surely
\begin{equation}\label{eq:YtorhoD}
    Y^{s,n_k,D}\stackrel{\mathbb{D}}{\rightarrow}Y^{s,D}, \quad \text{almost surely.}
\end{equation}
Since indicators are uniformly bounded, by \cite[Thm 2.34]{bressan2012lecture}{, almost surely} there exists a subsequence $(l_k)\subset(n_k)$ and $w^{2}(u) \in L^\infty[0,t]$, possibly depending on $(l_k)$, such that for any $\phi \in L^1[0,T]$ 
	\begin{equation}\label{eq:yweakstar}
	\lim _{k \rightarrow \infty} \int_0^t \phi(u) \mathbb{1}_{\{\bar{Q}^{l_k}_{u-}>0\}} d u=\int_0^t \phi(u) w^{2}(u) d u, \quad\text{for all } t\in[0,T].
\end{equation}
Note that $w^2$ could still be random at this stage. In addition from Proposition~\ref{prop:densityD}.(iii) we have that there exists a bounded function $d$ such that for any $\phi\in L^1[0,T]$ almost surely
\begin{equation}\label{eq:dlkweakstar}
\lim_{k\rightarrow\infty}\int_0^t \phi(u)d_{l_k}(u)du=\int_0^t\phi(u)d(u)du, \quad\text{for all } t\in[0,T].
\end{equation}
Recall $Y^{s,n,D}$ from \eqref{eq:Xgarho2}. By triangle inequality
\begin{align}\label{eq:ga2triangle}
&\left|Y^{s,l_k,D}_t-\int_0^tw^{2}(u) \bar{G}(t-u) d(u) d u\right| \nonumber\\
&\leq\left|\int_0^t \mathbb{1}_{\{\bar{Q}^{l_k}_{u-}>0\}} \bar{G}(t-u)\left(d_{l_k}(u)-d(u)\right) d u\right|+\left|\int_0^t\left(\mathbb{1}_{\{\bar{Q}^{l_k}_{u-}>0\}} -w^{2}(u)\right) \bar{G}(t-u)d(u) d u\right|,
%&\leq \int_0^t \left|d_{l_k}(u)-d(u)\right| d u+\int_0^t \left|\mathbb{1}_{\{\bar{Q}^{l_k}_{u-}>0\}} -w^{2}(u)\right|\bar{G}(t-u)d(u) d u.
\end{align}
where the right hand side converges to $0$ as $k\rightarrow\infty$ almost surely, thanks to \eqref{eq:yweakstar} and \eqref{eq:dlkweakstar}. Thus \eqref{eq:ga2triangle} yields for all $t \in [0,T]$, almost surely
%By \eqref{eq:dnL1} and \eqref{eq:yweakstar} we have 
\begin{equation}\label{eq:idYD}
	\lim_{k\rightarrow\infty}Y^{s,l_k,D}_t=\int_0^tw^{2}(u) \bar{G}(t-u) d(u) d u.
\end{equation}
This means we can identify $Y^{s,D}$ in \eqref{eq:YtorhoD} from \eqref{eq:idYD}, that is:
\begin{equation}\label{eq:YsD}
    Y^{s,D}_t=\int_0^tw^{2}(u) \bar{G}(t-u) d(u) d u.
\end{equation}
This limiting function $Y^{s,D}$ is continuous because $\bar{G}$ and $w^2$ are bounded, and $d$ is integrable. It follows that the convergence in \eqref{eq:YtorhoD} is also under the uniform topology:
\begin{equation}\label{eq:YtoYsDU}
    \lim_{k\rightarrow\infty}\sup_{t\in[0,T]}\left|Y^{s,l_k,D}_t-Y^{s,D}_t\right|=0,\quad \text{almost surely.}
\end{equation}

\vspace{0.1in}
\noindent
    Let us now analyze the term $X^{s,n,D}$. Similar to the argument in \eqref{eq:Xbd}-\eqref{eq:w'rho} one can also conclude that $X^{s,n,D}_t$ are uniformly bounded for all $t\in[0,T]$ and 
\begin{equation}\label{eq:w'rhonD}
\sup_n w^{\prime}_{\bar{S}^{n,D}}(\delta)=0.
\end{equation}
Using \eqref{eq:rhoDXga}, \eqref{eq:w'rhonD} and \eqref{eq:w'gaD} we obtain
\begin{equation}\label{eq:w'Xrho2}
\lim_{\delta \downarrow 0} \sup_n w'_{X^{s,n,D}}(\delta) = 0.
\end{equation}
The uniform boundedness and \eqref{eq:w'Xrho2} together imply that $\{X_t^{s,n,D}\}_{n {\geq} 1}$ is tight. Consider the process 
\beq\label{eq:Z_t}
Z_t^{{n}}=\int_0^t\int_\R \mathcal{M}_*^{{n},D}(du,dx)=\int_0^t\int_\R \mathcal{M}^{n,D}(du,dx)-\int_0^t\int_\R \mathcal{M}_c^{n,D}(du,dx).
\eeq
Since $\int_0^t\int_\R \mathcal{M}_c^{n,D}(du,dx)=\int_0^t\int_\R d_n(u)g(x)dudx$ is a continuous function with bounded variation, by \cite[Thm 26, Chapter 2]{protter2004stochastic} it has $0$ quadratic variation. It follows that the quadratic variation of $Z^{{n}}$ coincides with the quadratic variation of the pure jump process $\mathcal{M}^{n,D}([0, \cdot] \times \R)$, i.e.
	\begin{equation}\label{eq:Zquad}
[Z^{{n}},Z^{{n}}]_t={\sum_{i=1}^\infty}\left(\mathbb{1}_{\{D_i\leq t\}}\mathbb{1}_\R(V_{j_i})\right)^2=\int_0^t\int_\R \mathcal{M}^{n,D}(du,dx).
	\end{equation}
	Consequently by \cite[Thm 29, Chapter 2]{protter2004stochastic}, and \eqref{eq:Xgarho2}, \eqref{eq:Z_t} we have
	\begin{equation*}
		\left[X^{s,n,D},X^{s,n,D}\right]_t= \int_0^t\int_\R \left(W_n^{s,D}(t,u,x)\right)^2 d[Z^{{n}},Z^{{n}}]_t = \int_0^t\int_\R \left(W_n^{s,D}(t,u,x)\right)^2 \mathcal{M}^{n,D} (du,dx).
	\end{equation*}
	By \cite[Cor 3, Chapter 2]{protter2004stochastic} and \eqref{eq:campbellD} we conclude that
	\begin{align*}
\mathbb{E}\left(X^{s,n,D}_T\right)^2=\mathbb{E}\left(\left[X^{s,n,D},X^{s,n,D}\right]_T\right)&=\E\left[\int_0^T\int_\R \left(W_n^{s,D}(t,u,x)\right)^2 \mathcal{M}^{n,D} (du,dx)\right]\\
&\leq \frac{1}{n}\int_0^T\int_\R g(x)d_n(u)dudx \leq \frac{1}{n}\int_0^T \lambda(u)du\rightarrow 0,
	\end{align*}
	{as $n \to \infty$,} where we utilize \eqref{eq:Dbd} in the last inequality. Similar to the argument leading to \eqref{eq:Xucp} we obtained that 
		\begin{equation}\label{eq:Xrho2ucp}
		X^{s,n,D}\xrightarrow{p} 0, \quad \text{{in the uniform topology.}}
	\end{equation}
From \eqref{eq:Xrho2ucp} we know that there exists a subsequence $(r_k) \subset(l_k)$ such that
\begin{equation}\label{eq:XsDto0}
    \sup_{t\in [0,T]}|X^{s,r_k,D}| \rightarrow 0,\quad \text{almost surely.} 
\end{equation}
For this sequence $(r_k)$ we thus obtain from \eqref{eq:rhoDXga}, \eqref{eq:YtoYsDU} and \eqref{eq:XsDto0} that almost surely
\begin{equation}\label{eq:rhoD}
	\bar{S}_t^{r_k,D}=X_t^{s,r_k, D}+Y^{s,r_k,D}_t\rightarrow Y^{s,D}.
\end{equation}
in the uniform topology, where the function $Y^{s,D}$ is identified by \eqref{eq:YsD}.

\vspace{0.1in}
\noindent
{\emph{Conclusion}:} Let us denote for $t\in[0,T]$
\begin{equation*}
    \rho_t=Y^{s,A}_t+Y^{s,D}_t=\int_{0}^{t} w^{1}(u) \bar{G}(t-u) \lambda(u) d u  +\int_{0}^{t} w^{2}(u) \bar{G}(t-u) d(u) d u.
\end{equation*}
Combining \eqref{eq:rhoA} and \eqref{eq:rhoD} we conclude that almost surely
\begin{equation*}
	 \bar{S}^{r_k}=\bar{S}^{r_k,A}+\bar{S}^{r_k,D}\rightarrow\rho,
\end{equation*}
in the uniform topology, which is the desired convergence result for $\bar{S}^n$ in \eqref{eq:limitsub}. 

\noindent
{\emph{Number in buffer and departures}:} Similar arguments yield convergence of $\bar{Q}^n$ and $D^n$ in \eqref{eq:limitsub}, in addition to {the corresponding representations of the limits in} \eqref{eq:yw} and \eqref{eq:dw}. 

For the first term on the right hand side of \eqref{eq:yw}, since $\mathbb{1}_{\left\{\bar{S}^{r_k}_{u-}=1\right\}}=1-\mathbb{1}_{\left\{\bar{S}^{r_k}_{u-}<1\right\}}$, by a diagonalization argument one can get for any $\phi\in L^1[0,T]$
\begin{equation*}
\lim_{k\rightarrow\infty }\int_0^t\mathbb{1}_{\{\bar{S}^{r_k}=1\}}\mathbb{1}_{\{\bar{Q}^{r_k}<b_{r_k}/r_k\}}\phi(u)=\int_0^t(1-w^1(u))w^3(u)\phi(u)du, \quad\text{{almost surely.}}
\end{equation*}
For the left hand side of \eqref{eq:dw}, {recall from \eqref{eq:Z_t} that
\begin{equation}\label{eq:barDdecom}
    \bar{D}^n_t=\frac{1}{n}\int_0^t\int_\R \mathcal{M}_c^{n,D}(du,dx)+\frac{1}{n}Z_t^{{n}}=\int_0^td_n(u)du+\frac{1}{n}Z_t^{{n}}.
\end{equation}
By Proposition~\ref{prop:densityD} we know that for the subsequence $(l_k)$ and $d$ in \eqref{eq:dlkweakstar}, we have for $t\in[0,T]$
\begin{equation*}
    \int_0^td_{l_k}(u)du\rightarrow D_t=\int_0^td(u)du,
\end{equation*}
in the uniform topology. By \cite[Cor 3, Chapter 2]{protter2004stochastic}, \eqref{eq:campbellD} and \eqref{eq:Zquad} we conclude that
\begin{align*}
    \E\left(\frac{1}{n}Z_t^{{n}}\right)^2=\frac{1}{n^2}\E([Z^{{n}},Z^{{n}}]_t)&=\frac{1}{n^2}\E\left[\int_0^t\int_\R \mathcal{M}^{n,D}(du,dx)\right]\\
    &\leq\frac{1}{n}\int_0^t\int_\R g(x)d_n(u)du\,dx\leq\frac{1}{n}\int_0^t\la(u)du\rightarrow0.
\end{align*}
By a similar argument leading to \eqref{eq:XsDto0}, we obtain that there exists a subsequence $(r_k)\subset(l_k)$ such that
\begin{equation}\label{eq:Ztto0}
    \sup_{t\in[0,T]}\left|\frac{1}{r_k}Z_t^{{r_k}}\right|\rightarrow0, \quad\text{almost surely.}
\end{equation}
Combining \eqref{eq:barDdecom}-\eqref{eq:Ztto0} we can conclude that $\bar{D}^{r_k}$ converge to $D_t=\int_0^td(u)du$ {almost surely} in the uniform topology. This completes the proof of \emph{Part (i)}.}
%Thus we get our desired result.

\vspace{0.1in}
\noindent
\emph{Part (ii).} The weak* convergence of $\mathbb{1}_{\{\bar{Q}^{l_k}_{u-}>0\}}$ has already been shown above in \eqref{eq:yweakstar}, and the counterpart of $\mathbb{1}_{\{\bar{S}^{l_k}_{u-}<1\}}$ and $\mathbb{1}_{\{\bar{Q}^{l_k}_{u-}<b_n/n\}}$ are similar.

\vspace{0.1in}
\noindent
\emph{Part (iii).}
The functions $\rho, \eta, D$ have been identified in the proof of \emph{Part (i)}. It remains to show the constraints for functions $w^1,w^2,w^3$. We now observe that the set $\left\{\bar{S}_{u-}^n<1\right\}$ is identical to the set $\left\{\bar{S}_{u-}^n \leq 1-\frac{1}{n}\right\}$. This is because $\bar{S}^n$ only takes values in $\left\{\frac{i}{n}: i=1, \ldots, n\right\}$. Therefore, \eqref{eq:rhonA} {can be rewritten as}
	\begin{equation*}
		\bar{S}^{n,A}_t=X_t^{s,n,A}+\int_{0}^{t} \mathbb{1}_{\{\bar{S}^n_{u-}\leq1-\frac{1}{n}\}} \bar{G}(t-u) \lambda(u) d u.
	\end{equation*}
Next, we try to find out $w^1$. {Recall} the convergence stated in \eqref{eq:limitsub} in the uniform topology. Consequently fix $\varepsilon>0$ and choose $N$ large enough such that for all $k>N$ we have $r_k>\frac{3}{\varepsilon}$, $\left\|\bar{S}^{r_k}-\rho\right\|_T<\frac{\varepsilon}{3}$ {almost surely}. Then it is readily checked that
\begin{equation}\label{eq:leqchain}
	\mathbb{1}_{\{\rho_{u-}\leq1-\varepsilon\}} \leq \mathbb{1}_{\{\bar{S}^{r_k}_{u-}\leq 1-1/r_k\}} \leq \mathbb{1}_{\{\rho_{u-}<1+\varepsilon\}}.
\end{equation} 
Therefore for any such that $\phi\in L^1[0, T]$ we have
	\begin{equation*}
		\int_0^t \phi(u) \mathbb{1}_{\{\rho_{u-}\leq 1-\varepsilon\}} d u \leq \int_0^t \phi(u) \mathbb{1}_{\{\bar{S}^{r_k}_{u-}\leq 1-1/r_k\}} d u \leq \int_0^t \phi(u) \mathbb{1}_{\left\{\rho_{u-}<1+\varepsilon\right\}} d u, \quad \text{for all }t \in [0,T].
	\end{equation*}
	Note that $\lim _{\varepsilon \downarrow 0} \mathbb{1}_{\{\rho_{u-}<1-\varepsilon\}}=\mathbb{1}_{\{\rho_{u-}<1\}}$ and $\lim _{\varepsilon \downarrow 0} \mathbb{1}_{\{\rho_{u-}<1+\varepsilon\}}=\mathbb{1}_{\{\rho_{u-}\leq 1\}}=1$. Consequently taking $k \rightarrow \infty$ and then $\varepsilon \downarrow 0$ we have by the dominated convergence theorem and \eqref{eq:rhoweakstar} that:
	\begin{equation*}
		\int_0^t \phi(u) \mathbb{1}_{\left\{\rho_{u-}<1\right\}} d u \leq \int_0^t w^1(u) \phi(u) d u \leq \int_0^t \phi(u)  d u, \quad \text{for all }t \in [0,T].
	\end{equation*}
	Since $\phi$ is arbitrary we have {almost surely}
	\begin{equation}\label{eq:w1leq}
		\mathbb{1}_{\left\{\rho_{u-}<1\right\}} \leq w^1(u) \leq 1,\quad \text{a.e. in }[0,T].
	\end{equation}
Observe that one can also replace $\{\bar{Q}^{r_k}_{u-}<b_{r_k}/r_k\}$ by $\{\bar{Q}^{r_k}_{u-}\leq b_{r_k}/r_k-1/r_k\}$ and $\{\bar{Q}^{r_k}_{u-}>0\}$ by $\{\bar{Q}^{r_k}_{u-}\geq1/r_k\} $. Similar to \eqref{eq:leqchain}, for any $\varepsilon >0$ one can choose large enough $N$ such that for all $k>N$ we have $r_k>\frac{3}{\varepsilon}$, $\left\|\bar{Q}^{r_k}-y\right\|_T<\frac{\varepsilon}{3}$ and $\|b_{r_k}/r_k-\beta\|<\frac{\varepsilon}{3}$ {almost surely}. Then it is readily checked that {almost surely}
\begin{align}
	&\mathbb{1}_{\{\eta_{u-}\leq\beta-\varepsilon\}} \leq \mathbb{1}_{\{\bar{Q}^{r_k}_{u-}\leq b_{r_k}/r_k-1/r_k\}} \leq \mathbb{1}_{\{\eta_{u-}<\beta+\varepsilon\}},\label{eq:y<1}\\
	&\mathbb{1}_{\{\eta_{u-}>\varepsilon\}} \leq \mathbb{1}_{\{\bar{Q}^{r_k}_{u-}\geq1/r_k\}} \leq 1.\label{eq:y>0}
\end{align} 
Similar to the argument leading to \eqref{eq:w1leq}, from \eqref{eq:y>0} we can conclude that {almost surely} for any $\phi\in L^1[0,T]$
\begin{equation*}
		\int_0^t \phi(u) \mathbb{1}_{\left\{\eta_{u-}>0\right\}} d u \leq \int_0^t w^2(u) \phi(u) d u \leq \int_0^t \phi(u)  d u, \quad \text{for all }t \in [0,T],
	\end{equation*}
and {almost surely}
\begin{equation*}%\label{eq:w2leq}
		\mathbb{1}_{\left\{\eta_{u-}>0\right\}} \leq w^2(u) \leq 1,\quad \text{a.e. in }[0,T].
	\end{equation*}
From \eqref{eq:y<1} we can conclude that for any $\phi\in L^1[0,T]$
\begin{equation*}
		\mathbb{1}_{\{\eta_{u-}<\beta\}} \leq \int_0^t w^3(u) \phi(u) d u \leq \int_0^t \phi(u) d u,
	\end{equation*}
and {almost surely}
\begin{equation*}%\label{eq:w3leq}
		\mathbb{1}_{\{\eta_{u-}<\beta\}}\leq w^3(u) \leq 1,\quad \text{a.e. in }[0,T].
	\end{equation*}
Therefore, by \eqref{eq:rhow}-\eqref{eq:dw} and Definition~\ref{def:sol_Vol} we conclude that $(\rho,\eta,d,w^1,w^2,w^3)$ is the solution to the discontinuous Volterra equation \eqref{eq:rhoVoln}.
\end{proof}
{We have established a fluid limit for $(\bar{S}^n, \bar{Q}^n, \bar{D}^n)$ along a subsequence when the system starts empty. Now,
we extend our considerations to a more general case.}

\begin{assumption}\label{asm:initial}
	Let the conditions under Assumption~\ref{asm:queue} hold. In addition, let the number of customers in the system at time $0$: $N_0^n$, satisfy the following asymptotic result:
	
\begin{equation*}
		\lim _{n \rightarrow \infty}\left|\frac{N_0^n}{n}-r_0\right|=0, \quad \text{ almost surely, }
\end{equation*}
	where $r_0 \in(0,1+\beta]$. Moreover, assume that the empirical distribution $F^n$ of the remaining service times of the initial occupied servers satisfy
	\begin{equation*}
		\lim_{n \rightarrow \infty} \sup_{t}\left|F^n(t)-F(t)\right|=0, \quad \text{almost surely}
	\end{equation*}
	for some distribution $F$.
\end{assumption}
\begin{proposition}\label{prop:weaklimitsinif}
	Let Assumption~\ref{asm:initial} hold. Then
\begin{enumerate}[label = (\roman*), wide, labelwidth = !, labelindent = 0pt]
\item For any $T > 0$ and for any subsequence of $(n)$, there exists a further subsequence $r_k$ and real-valued continuous, {possibly stochastic} processes $\rho,\eta,D$ such that almost surely{
	\begin{equation}\label{eq:limitsubninibuf}
\bar{S}^{r_k}_t\rightarrow\rho_t,\quad \bar{Q}^{r_k}_t\rightarrow\eta_t, \quad\bar{D}_t^{r_k}\rightarrow D_t,
	\end{equation} 
in the uniform topology.}
\item {Moreover, given $(r_k)$, almost surely there exist bounded, possibly stochastic processes $w^{1}, w^{2}, w^{3}$ such that }
\begin{equation}\label{eq:rhoydweak*wninibuf}
\mathbb{1}_{\{\bar{S}^{r_k}_{t-}<1\}}\stackrel{*}{\rightharpoonup}w^1(t), \quad 
\mathbb{1}_{\{\bar{Q}^{r_k}_{t-}>0\}}\stackrel{*}{\rightharpoonup}w^2(t), \quad
\mathbb{1}_{\{\bar{Q}^{r_k}_{t-}<{b_{r_k}}/{r_k}\}}\stackrel{*}{\rightharpoonup}w^3(t), \quad \text{in } L^\infty[0,T].
\end{equation}
\item {Furthermore, almost surely, $(\rho,\eta,D,w^1,w^2,w^3)$ defined in \eqref{eq:limitsubninibuf}-\eqref{eq:rhoydweak*wninibuf} satisfy}
\begin{align}
		&\rho_{t}=\min\{r_0,1\}\bar{F}(t)+\int_{0}^{t} w^{1}(u) \bar{G}(t-u) \lambda(u) d u  +\int_{0}^{t} w^{2}(u) \bar{G}(t-u) d(u) d u, \label{eq:rhow2}\\
		& \eta_{t}=\max\{r_0-1,0\}+\int_{0}^{t} (1-w^1(u))w^{3}(u) \lambda(u) d u  -\int_{0}^{t} w^{2}(u) d(u) d u, \label{eq:yw2}\\
		& D_t=\min\{r_0,1\}F(t)+\int_{0}^{t}w^{1}(u) G(t-u) \lambda(u) d u  +\int_{0}^{t} w^{2}(u) G(t-u) d(u) d u,\label{eq:dw2}	
\end{align}
and for almost every $t\in[0,T]$
\begin{equation*}
\mathbb{1}_{\{\rho_t<1\}} \leq w^1(t) \leq 1, \quad
\mathbb{1}_{\left\{\eta_{t}>0\right\}} \leq w^2(t) \leq 1, \quad
\mathbb{1}_{\{\eta_t<\beta\}}\leq w^3(t) \leq 1.
\end{equation*}
That is, {for almost all $\om\in\Omega$} $(\rho{(\om)},\eta{(\om)},D,w^1{(\om)},w^2{(\om)},w^3{(\om)})$ as in \eqref{eq:rhow2}-\eqref{eq:dw2} is a solution, interpreted according to Definition~\ref{def:sol_Vol}, to the following non-linear discontinuous Volterra integral equation
\begin{align}\label{eq:rhoVoln-init}
		&\rho_{t}=\min\{r_0,1\}\bar{F}(t)+\int_{0}^{t} \mathbb{1}_{\{\rho_{u-}<1\}} \bar{G}(t-u) \lambda(u) d u  +\int_{0}^{t} \mathbb{1}_{\{\eta_{u-}>0\}} \bar{G}(t-u) d(u) d u, \nonumber \\
		& \eta_{t}=\max\{r_0-1,0\}+\int_{0}^{t} \mathbb{1}_{\{\rho_{u-}=1\}} \mathbb{1}_{\{\eta_{u-}<\beta\}} \lambda(u) d u  -\int_{0}^{t} \mathbb{1}_{\{\eta_{u-}>0\}} d(u) d u, \nonumber \\
		& D_t=\min\{r_0,1\}F(t)+\int_{0}^{t} \mathbb{1}_{\{\rho_{u-}<1\}} G(t-u) \lambda(u) d u  +\int_{0}^{t} \mathbb{1}_{\{\eta_{u-}>0\}} G(t-u) d(u) d u.
	\end{align}

\end{enumerate}
\end{proposition}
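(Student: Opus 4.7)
The plan is to mirror the extension of Proposition \ref{prop:weaklimits0} to Proposition \ref{thm:fluidlimitini0} carried out in the zero-buffer case. The new ingredient, relative to Proposition \ref{prop:weaklimits}, is a nonzero initial state, which splits into $\min(N_0^n, n)$ initial customers in service (with remaining service times $V_i^0$ sampled from $F^n$) and $\max(N_0^n - n, 0)$ initial customers in buffer. I would first rewrite the stochastic-integral representations \eqref{eq:rho^n}--\eqref{eq:D^n} as
\begin{align*}
\bar S^n_t &= \frac{1}{n}\sum_{i=1}^{\min(N_0^n, n)} \mathbb{1}_{\{V_i^0 > t\}} + \int_0^t\!\!\int_\R W_n^{s,A}(t,u,x)\,\mathcal M^{n,A}(du,dx) + \int_0^t\!\!\int_\R W_n^{s,D}(t,u,x)\,\mathcal M^{n,D}(du,dx), \\
\bar Q^n_t &= \frac{\max(N_0^n - n, 0)}{n} + \int_0^t\!\!\int_\R W_n^{q,A}(t,u,x)\,\mathcal M^{n,A}(du,dx) - \int_0^t\!\!\int_\R W_n^{q,D}(t,u,x)\,\mathcal M^{n,D}(du,dx), \\
\bar D^n_t &= \frac{1}{n}\sum_{i=1}^{\min(N_0^n, n)} \mathbb{1}_{\{V_i^0 \leq t\}} + \int_0^t\!\!\int_\R W_n^{d,A}(t,u,x)\,\mathcal M^{n,A}(du,dx) + \int_0^t\!\!\int_\R W_n^{d,D}(t,u,x)\,\mathcal M^{n,D}(du,dx),
\end{align*}
where $\mathcal M^{n,D}$ now records the departure times of both initial and post-time-$0$ customers paired with the service time of whichever customer takes over service at that instant.

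Next, I would handle the two initial sums by the Glivenko-Cantelli argument used in the zero-buffer case. Assumption~\ref{asm:initial} provides $\min(N_0^n, n)/n \to \min(r_0, 1)$, $\max(N_0^n - n, 0)/n \to \max(r_0 - 1, 0)$, and $\sup_t |F^n(t) - F(t)| \to 0$ almost surely. Repeating the decomposition \eqref{eq:initial-decomp}--\eqref{eq:iniD0} separately for the survival and departure indicators yields, almost surely,
$$
\sup_t \left| \tfrac{1}{n}\sum_{i=1}^{\min(N_0^n, n)} \mathbb{1}_{\{V_i^0 > t\}} - \min(r_0,1)\bar F(t) \right| \to 0, \quad \sup_t \left| \tfrac{1}{n}\sum_{i=1}^{\min(N_0^n, n)} \mathbb{1}_{\{V_i^0 \leq t\}} - \min(r_0,1) F(t) \right| \to 0,
$$
which produces the additive terms $\min(r_0,1)\bar F(t)$ in \eqref{eq:rhow2} and $\min(r_0,1) F(t)$ in \eqref{eq:dw2}, while the initial-buffer count contributes the deterministic constant $\max(r_0 - 1, 0)$ in \eqref{eq:yw2}.

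For the counting-measure integrals I would first re-establish Proposition~\ref{prop:densityD} in this setting. The density $d_n$ acquires a second component from initial-in-service departures; because $F$ is not assumed to admit a density, rather than redoing the hazard-rate martingale calculation for the initial part, I would rely on the cumulative bound $\bar D^n_t \leq 1 + \int_0^T \lambda(u)\,du$ together with the Lipschitz estimate \eqref{eq:nueqcon} for the post-time-$0$ contribution. Uniform convergence of the initial departure term to the continuous function $\min(r_0,1)F(t)$ preserves equicontinuity of $\E[\bar D^n_t]$, so Helly's selection theorem again provides a subsequence with $d_{n_k} \stackrel{*}{\rightharpoonup} d$ in $L^\infty[0,T]$. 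With this extension secured, the tightness, weak-$\ast$ convergence, and identification steps in the proof of Proposition~\ref{prop:weaklimits} transfer to the counting-measure integrals with no essential change, delivering bounded limits $w^1, w^2, w^3$. Assembling the initial and dynamical limits along a common subsequence $(r_k)$ yields parts (i) and (ii), while the sandwich estimates from Proposition~\ref{prop:weaklimits}(iii) yield the constraints $\mathbb{1}_{\{\rho_t < 1\}} \leq w^1(t) \leq 1$, $\mathbb{1}_{\{\eta_t > 0\}} \leq w^2(t) \leq 1$, and $\mathbb{1}_{\{\eta_t < \beta\}} \leq w^3(t) \leq 1$ required in (iii), whence $(\rho,\eta,D,w^1,w^2,w^3)$ solves \eqref{eq:rhoVoln-init} in the sense of Definition~\ref{def:sol_Vol}. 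The main obstacle is the extension of Proposition~\ref{prop:densityD} without regularity of $F$: without a density the original hazard-rate martingale argument does not apply to the initial customers, and one must instead argue via the integrated bound on $\bar D^n_t$ and the uniform Lipschitz continuity of the limit, which is where additional care (or a mild assumption on $F$) is needed.
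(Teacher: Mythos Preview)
Your proposal is correct and takes essentially the same approach as the paper: both write the extended representations with initial-contribution terms, handle those via the Glivenko--Cantelli argument from the zero-buffer case, and then defer to the empty-start analysis of Proposition~\ref{prop:weaklimits} for the counting-measure integrals. The paper's proof is in fact terser than yours---it simply records the three uniform limits \eqref{eq:inirho}--\eqref{eq:iniy} and then states that the remaining analysis carries over from Proposition~\ref{prop:weaklimits}, without revisiting Proposition~\ref{prop:densityD} at all.

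Your flagged obstacle about extending Proposition~\ref{prop:densityD} when $F$ lacks a density is a legitimate concern that the paper does not address in this proof. However, the paper implicitly assumes $F$ admits a density $f$: this is used explicitly in the subsequent uniqueness proof (Theorem~\ref{thm:solVoln}, equation \eqref{eq:dwdiff}) where $d(t)=\min\{r_0,1\}f(t)+\cdots$ appears. So while Assumption~\ref{asm:initial} as stated does not include this hypothesis, the paper's overall argument requires it, and under that assumption the hazard-rate martingale calculation of Proposition~\ref{prop:densityD} applies directly to the initial customers as well. Your workaround via the integrated bound is therefore not strictly needed, though it is a reasonable observation.
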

\begin{proof}
At time $0$, the number of customers in service is $\min\{N_0^n,n\}$ and the number of customers in buffer is $\max\{N_0^n-n,0\}$. Let the remaining service times for the customers in service to be $\left(V_i^0\right)_{1 \leq i \leq \min\{N_0^n,n\}}$. Then, similar to \eqref{eq:rho^n}-\eqref{eq:D^n} we have:
\begin{align*}
	&\bar{S}^n_t=\frac{1}{n}\sum_{i=1}^{\min\{N_0^n,n\}}\mathbb{1}_{\{V_i^0>t\}}+\int_0^t\int_\R W_n^{s,A}(t,u,x) \mathcal{M}^{n,A} (du,dx)+\int_0^t\int_\R W_n^{s,D}(t,u,x) \mathcal{M}^{n,D} (du,dx),\\%\label{eq:rho^n}\\
	&\bar{Q}^n_t=\frac{1}{n}\max\{N_0^n-n,0\}+\int_0^t\int_\R W_n^{q,A}(t,u,x) \mathcal{M}^{n,A} (du,dx)-\int_0^t\int_\R W_n^{q,D}(t,u,x) \mathcal{M}^{n,D} (du,dx),\\%\label{eq:y^n}\\
	&\bar{D}_{t}^{n}=\frac{1}{n}\sum_{i=1}^{\min\{N_0^n,n\}}\mathbb{1}_{\{V_i^0\leq t\}}+\int_0^t\int_\R W_n^{d, A}(t,u,x) \mathcal{M}^{n,A} (du,dx)+\int_0^t\int_\R W_n^{d,D}(t,u,x) \mathcal{M}^{n,D} (du,dx).%\label{eq:D^n}
\end{align*}	
Observing that 
\begin{equation*}
	\frac{1}{n}\sum_{i=1}^{\min\{N_0^n,n\}}\mathbb{1}_{\{V_i^0>t\}}=\frac{\min\{N_0^n,n\}}{n}\sum_{i=1}^{\min\{N_0^n,n\}}\frac{\mathbb{1}_{\{V_i^0>t\}}}{\min\{N_0^n,n\}}.
\end{equation*}
By Assumption~\ref{asm:initial} and \eqref{eq:anbn} we have that
\begin{equation}\label{eq:inirho}
	\lim_{n \rightarrow \infty} \sup_t \left|\frac{1}{n}\sum_{i=1}^{\min\{N_0^n,n\}}\mathbb{1}_{\{V_i^0>t\}}- \min\{r_0,1\}\bar{F}(t)\right|=0, \quad \text{almost surely.}
\end{equation}
Similarly
\begin{equation}\label{eq:iniD}
	\lim_{n \rightarrow \infty} \sup_t \left|\frac{1}{n}\sum_{i=1}^{\min\{N_0^n,n\}}\mathbb{1}_{\{V_i^0\leq t\}}- \min\{r_0,1\}F(t)\right|=0, \quad \text{almost surely}
\end{equation}
and obviously
\begin{equation}\label{eq:iniy}
	\lim_{n \rightarrow \infty}\left|\frac{1}{n}\max\{N_0^n-n,0\}-\max\{r_0-1,0\} \right|=0.
\end{equation}
Since we already analyzed the integration w.r.t $\mathcal{M}^{n,A}$ and $\mathcal{M}^{n,D}$ in Proposition~\ref{prop:weaklimits}, by \eqref{eq:inirho}-\eqref{eq:iniy} we get the desired results.
\end{proof}

{
Now, we establish the existence of a unique $(\rho, \eta, D)$ that satisfies \eqref{eq:rhoVoln-init} in the sense of Definition~\ref{def:sol_Vol}.
Consequently, we obtain a unique fluid limit of the fraction of busy servervs, fraction of occupied buffers and the $n-$scaled cumulative departure rate.}

\begin{theorem}\label{thm:solVoln}
	Let Assumption~\ref{asm:initial} hold. Then there exists a unique solution to the discontinuous Volterra integral equation \eqref{eq:rhoVoln-init}, that is, there exist a unique solution $(\rho,\eta,d)$ such that for $t\in [0,T]$
	\begin{align}%\label{eq:solVol}
		%\label{eq:existlemrho}\\
&\rho_{t}=\min\{r_0,1\}\bar{F}(t)+\int_{0}^{t} z^1(u) \bar{G}(t-u) \lambda(u) d u  +\int_{0}^{t} z^2(u) \bar{G}(t-u) d(u) d u, \label{eq:rhown}\\
		& \eta_{t}=\max\{r_0-1,0\}+\int_{0}^{t} (1-z^1(u))z^3(u) \lambda(u) d u  -\int_{0}^{t} z^2(u) d(u) d u, \label{eq:ywn}\\
		& D_t=\min\{r_0,1\}F(t)+\int_{0}^{t}z^1(u) G(t-u) \lambda(u) d u  +\int_{0}^{t} z^2(u) G(t-u) d(u) d u,\label{eq:dwn}
	\end{align}
{for some} $(z^1,z^2,z^3)$ that satisfies for almost every $t\in[0,T]$
\begin{align}
&\mathbb{1}_{\{\rho_t<1\}} \leq z^1(t) \leq 1,\quad  \label{eq:w1tbd}\\
&\mathbb{1}_{\left\{\eta_{t}>0\right\}} \leq z^2(t) \leq 1,\quad \label{eq:w2tbd}\\
& \mathbb{1}_{\{\eta_t<\beta\}}\leq z^3(t) \leq 1, \quad and \label{eq:w3tbd}\\
		&0 \leq \rho_t\leq 1,\quad 0 \leq \eta_t\leq \beta,\quad \eta_t(1-\rho_t)=0. \label{eq:solbdn}
\end{align}
\end{theorem}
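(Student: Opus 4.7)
The plan is to follow the strategy used in the proof of Theorem~\ref{thm:solVol} for the zero-buffer case, generalized to handle the coupled three-equation system. Existence of a solution follows directly from Proposition~\ref{prop:weaklimitsinif}, since the functions $w^1,w^2,w^3$ constructed there as weak-$*$ limits satisfy precisely the constraints \eqref{eq:w1tbd}--\eqref{eq:solbdn} and the triplet $(\rho,\eta,D)$ solves the required integral equations. The substantive work is therefore uniqueness of $(\rho,\eta,D)$.

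First I would identify the physical regimes of the state space. The constraint $\eta_t(1-\rho_t)=0$ in \eqref{eq:solbdn} together with $\rho_t\in[0,1]$ and $\eta_t\in[0,\beta]$ confines $(\rho_t,\eta_t)$ to an L-shaped region with three main regimes: (I) $\rho_t<1,\,\eta_t=0$ (servers not saturated); (II) $\rho_t=1,\,\eta_t\in(0,\beta)$ (servers saturated, buffer partially filled); (III) $\rho_t=1,\,\eta_t=\beta$ (fully blocked). In direct analogy with the times $\{\tau_i,\sigma_i\}$ defined in \eqref{eq:tausigma}, I introduce regime-switching times of $\rho$ (between $\{\rho<1\}$ and $\{\rho=1\}$) and of $\eta$ (between $\{\eta=0\}$, $(0,\beta)$, and $\{\beta\}$). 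By continuity of $\rho,\eta$ and the fact that disjoint open intervals in $\R$ inject into $\mathbb{Q}$, the union of these switching sets is countable and well-ordered, so an induction over it is available.

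Within each regime the constraints \eqref{eq:w1tbd}--\eqref{eq:w3tbd} pin down the unknowns. In Regime I one has $z^1(u)=1$ identically, and $\eta_u\equiv 0$ together with \eqref{eq:ywn} fixes the remaining integrals, reducing \eqref{eq:rhown} and \eqref{eq:dwn} to standard continuous Volterra equations with unique solutions. In Regime II, $z^2(u)=z^3(u)=1$, and imposing $\rho_t\equiv 1$ permits differentiation of \eqref{eq:rhown} by \cite[Thm 2.7]{vsremr2012differentiation} (combined with Young's inequality), as in \eqref{eq:leibniz}, to yield a linear Volterra equation of the second kind coupling $z^1\lambda$ and $d$, paired with the differentiated equation for $D$; the resulting linear system is triangular since, once $z^1,z^2,z^3$ are frozen, $d$ alone obeys a renewal equation in the kernel $g$ whose unique solvability follows from \cite[Thm 6.3.1]{brunner2017volterra}. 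Regime III is analogous with $z^3$ additionally relaxed but constrained by the requirement $\eta_t\equiv\beta$. I would then argue by contradiction: suppose two solutions $(\rho^j,\eta^j,D^j)$, $j=1,2$, agree up to a first switching time $t^\star$ (well-defined by the well-ordering) but differ afterwards. Within a small interval $[t^\star,t^\star+\delta)$ both solutions lie in common or adjacent regimes, and the uniqueness just established in each regime forces agreement; when the solutions choose different regimes at $t^\star$ (e.g.\ $\rho^1_t<1<\rho^2_t=1$ on the right of $t^\star$) the contradiction is extracted exactly as in \eqref{eq:rho1<rho2}--\eqref{eq:w1-w2lam} in the proof of Theorem~\ref{thm:solVol}, using positivity of $\bar{G}$ together with the bound $z^1\leq 1$.

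The main obstacle, relative to the scalar zero-buffer case, is that the departure rate $d$ is itself an unknown and enters as a \emph{coefficient} rather than a known driver in the equations for $\rho$ and $\eta$. This couples the three integral equations nonlinearly through the indicators $\mathbb{1}_{\{\rho_{u-}<1\}}$, $\mathbb{1}_{\{\eta_{u-}>0\}}$, and $\mathbb{1}_{\{\eta_{u-}<\beta\}}$. The crux of the argument is therefore showing that, within each saturated regime, the differentiated system for $(z^1\lambda,d)$ reduces to a linear Volterra system amenable to \cite[Thm 6.3.1]{brunner2017volterra}, so that the zero-resolvent property of $g$ exploited in \eqref{eq:diffw1-w2}--\eqref{eq:w1-w2lam} can be reapplied. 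Once this is in place, piecing together the regime-by-regime uniqueness across the countable switching set completes the proof.
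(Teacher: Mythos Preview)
Your overall strategy matches the paper's, but your three-regime decomposition omits a crucial fourth state: $\rho_t=1$, $\eta_t=0$ (the paper's \emph{State~2}). This is not a transient boundary that can be absorbed into your Regimes~I or~II; the system can persist in it for positive time, and all transitions between your Regimes~I and~II pass through it. In this state neither $z^1$ nor $z^2$ is pinned down by \eqref{eq:w1tbd}--\eqref{eq:w2tbd}: since $\rho_t=1$ the constraint on $z^1$ is only $0\le z^1\le 1$, and since $\eta_t=0$ the constraint on $z^2$ is only $0\le z^2\le 1$. The paper resolves this by differentiating both \eqref{eq:rhown} and \eqref{eq:ywn} and imposing $\rho_t'=\eta_t'=0$ simultaneously, which yields the relation $z^1(t)\lambda(t)+z^2(t)d(t)=\lambda(t)$ a.e.\ in this state. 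That combination is enough to determine $d$ explicitly and to characterize the exit time as the first moment $\lambda-d$ becomes strictly positive (next state~3) or strictly negative (next state~1) on a set of positive measure.

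Without this State-2 analysis, your inductive step breaks precisely at the transitions. If one candidate solution has $\rho_t<1$ and the other has $\rho_t=1$, $\eta_t=0$ just after $t^\star$, the contradiction argument borrowed from \eqref{eq:rho1<rho2}--\eqref{eq:w1-w2lam} does not go through directly, because in the second solution $z^2$ is no longer forced to $0$ and $z^1$ is no longer forced to $1$; you cannot compare the two integral representations term by term. You need the identity $z^1\lambda+z^2 d=\lambda$ in State~2 to recover a comparable quantity (namely the combination $z^1\lambda+z^2 d$, which the paper shows is uniquely determined across all four states) and to argue that the exit direction from State~2 is itself unique.
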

\begin{proof}
The existence of the solution directly follows from Proposition~\ref{prop:weaklimitsinif}. Before we prove the uniqueness, let us first talk about the differentiability of the processes of interest. 
Since $\int_0^td(u)du\leq\int_0^t\lambda(u)du\leq\infty$ for $t\in[0,T]$, $d\in L^1[0,T]$. For bounded $x(t)$, since $d(t),\lambda(t),g(t) \in L^1[0,T]$, by Young's convolution inequality we have 
\begin{align*}
\frac{\partial}{\partial t} x(u)G(t-u)\lambda(u)=x(u)\lambda(u)g(t-u)\in L^1([0,T]\times[0,T]),\\
\frac{\partial}{\partial t} x(u)G(t-u)d(u)=x(u)d(u)g(t-u)\in L^1([0,T]\times[0,T]).
\end{align*}
Therefore, by \cite[Thm 2.7]{vsremr2012differentiation} we can differentiate both side of \eqref{eq:dwn} for $t\in(0,T)$ 
\begin{equation}\label{eq:dwdiff}
d(t)=\min\{r_0,1\}f(t)+\int_{0}^{t}\left(z^1(u)\lambda(u)+z^2(u) d(u)\right) g(t-u)  d u , \quad \text{a.e. in }[0,T].
\end{equation}
Differentiating both side of \eqref{eq:rhown} and plugging in \eqref{eq:dwdiff} we obtain 
\begin{align}\label{eq:rhowdiff}
\rho_t^{\prime}&=-\min\{r_0,1\}f(t)+\left(z^1(t)\lambda(t)+z^2(t)d(t) \right)-\int_{0}^{t}\left( z^1(u)\lambda(u) +z^2(u)d(u)\right) g(t-u)  d u \nonumber\\
&=\left(z^1(t)\lambda(t)+z^2(t)d(t) \right)-d(t), \quad \text{a.e. in }[0,T].
\end{align}
By \eqref{eq:solbdn} we know that when $\eta_t>0$, $\rho_t=1$. Hence, in order to prove the  uniqueness of $(\rho,\eta,d)$, we can divide the situations into four states: 
\begin{enumerate}
\item $\rho_t<1,\eta_t=0$. 
\item $\rho_t=1,\eta_t=0$. 
\item $\rho_t=1, 0<\eta_t<\beta$.
\item $\rho_t=1,\eta_t=\beta$.
\end{enumerate}
Denote $\delta^k_i=\inf_{t>\ga_i^{k-1}}\{t:(\rho_t,\eta_t) \in \text{state } i\}$ to be the $k$-th time $(\rho_t,\eta_t)$ entering the $i$-th state, and $\ga^k_i=\inf_{t>\delta_i^{k}}\{t:(\rho_t,\eta_t) \notin \text{state } i\}$ denote the $k$-th time $(\rho_t,\eta_t)$ leaving the $i$-th state, $i=1,2,3,4$ and $k=1,2,3,\cdots$. With a similar argument after \eqref{eq:tausigma} we can conclude that there are at most countable many $k$. We discuss each state in the following for any $k\in\N$.

\emph{State 1}: $\rho_t<1,\eta_t=0$. We need to identify $\rho,d$ and $\ga^k_1$ in this state. From \eqref{eq:w1tbd} we have for $t \in(\delta^k_1,\ga^k_1)$, $z^1(t)=1,\,a.e.$. Plugging this into \eqref{eq:ywn} we have
\begin{equation*}
\eta_t=\eta_{\delta^k_1} -\int_{\delta^k_1}^{t} z^2(u) d(u) d u.
\end{equation*}
Since $\eta$ is continuous, for $t\in(\delta^k_1,\ga^k_1)$ we have $\eta_t=\eta_{\delta^k_1}=0$. Therefore, we can conclude that $z^2(t)d(t)=0,\,a.e.$. Substituting $z^1$ and $z^2d$ with their values in \eqref{eq:rhown} and \eqref{eq:dwdiff} we have for $t\in(\delta^k_1,\ga^k_1)$
\begin{equation*}
\rho_t=\min\{r_0,1\}\bar{F}(t)+\int_{0}^{\delta^k_1} \left(z^1(u)\lambda(u)+z^2(u) d(u)\right) \bar{G}(t-u) du  +\int_{\delta^k_1}^{t} \bar{G}(t-u)\lambda(u) d u,
\end{equation*}
and
\begin{equation*}
d(t)=\min\{r_0,1\}f(t)+\int_{0}^{\delta^k_1}\left(z^1(u)\lambda(u)+z^2(u) d(u)\right) g(t-u)  d u+\int_{\delta^k_1}^{t}\lambda(u) g(t-u)  d u, \quad \text{a.e. in }[0,T].
\end{equation*}
We can see that $\ga^k_1=\inf_{t>\delta^k_1}\{\rho_t=1\}$ is unique if $\delta^k_1$ and $z^1(u)\lambda(u)+z^2(u) d(u)$ is known for $u\in[0, \delta^k_1)$. The next state can only be \textit{state 2}.

\emph{State 2}: $\rho_t=1,\eta_t=0$. We need to identify $d$ and $\ga^k_2$ in this state. From \eqref{eq:w3tbd} we have for $t \in(\delta^k_2,\ga^k_2)$, $z^3(t)=1,\,a.e.$. Plugging this into \eqref{eq:ywn} we have
\begin{equation}\label{eq:s2yt}
\eta_t=\eta_{\delta^k_2}+\int_{\delta^k_2}^{t} (1-z^1(u)) \lambda(u) d u  -\int_{\delta^k_2}^{t} z^2(u) d(u) d u.
\end{equation}
Since $\eta$ is continuous, for $t\in(\delta^k_2,\ga^k_2)$ we have $\eta_t=\eta_{\delta^k_2}=0$. Therefore,
\begin{equation*}
\int_{\delta^k_2}^{t} (1-z^1(u)) \lambda(u) d u  -\int_{\delta^k_2}^{t} z^2(u) d(u) d u=0.
\end{equation*}
Since $t$ is arbitrary, we can conclude that for $t\in(\delta^k_2,\ga^k_2)$
\begin{equation}\label{eq:w1lamw2d}
z^1(t)\lambda(t)+z^2(t) d(t)=\lambda(t),\quad\text{a.e. in }[0,T].
\end{equation}
Plugging \eqref{eq:w1lamw2d} into \eqref{eq:dwdiff} we get
\begin{equation*}
d(t)=\min\{r_0,1\}f(t)+\int_{0}^{\delta^k_2}\left(z^1(u)\lambda(u)+z^2(u) d(u)\right) g(t-u)  d u+\int_{\delta^k_2}^{t}\lambda(u) g(t-u)  d u, \quad \text{a.e. in }[0,T].
\end{equation*}
To identify $\ga^k_2$ we can plug \eqref{eq:w1lamw2d} into \eqref{eq:rhowdiff}. Since $\rho_t^{\prime}=0$ for $t\in(\delta^k_2,\ga^k_2)$, we have
\begin{equation}\label{eq:rho=lambda-d}
\rho_t^{\prime}=\lambda(t)-d(t)=0 , \quad \text{a.e. in }[0,T].
\end{equation}
Define 
$$\delta^k_{2,1}=\sup_{t>\delta^k_2}\{t: \lambda(s)\geq d(s),\,\text{ for } a.e. \,s\in(\delta^k_2,t)\}$$
the first time after $\delta^k_2$ that $\lambda(t)<d(t)$ for a positive measure set, and 
$$\delta^k_{2,3}=\sup_{t>\delta^k_2}\{t: \lambda(s)\leq d(s),\,\text{ for } a.e. \,s\in(\delta^k_2,t)\}$$ 
denote the first time after $\delta^k_2$ that $\lambda(t)>d(t)$ for a positive measure set. Since \eqref{eq:rho=lambda-d} is true for $t\in(\delta^k_2,\ga^k_2)$, $\ga^k_2=\min[\delta^k_{2,1},\delta^k_{2,3}]$. If $\ga^k_2=\delta^k_{2,1}$, the next state will be \textit{state 1}. Indeed, since $\eta_t$ is continuous, there exist small enough $\varepsilon>0$ s.t. for $t\in(\delta^k_{2,1},\delta^k_{2,1}+\varepsilon)$, $0\leq \eta_t<\beta$ and thus $z^3(t)=1,\,a.e.$. Consequently \eqref{eq:s2yt} is true in this interval. Applying Leibniz rule to \eqref{eq:s2yt} we have 
\begin{equation}\label{eq:s2yprime}
\eta_t^{\prime}= (1-z^1(t)) \lambda(t)  - z^2(t) d(t), \quad \text{a.e. in }[0,T].
\end{equation}
Since $\eta_{\delta^k_{2,1}}=0$ and $\eta_t\geq0$ is continuous, there exist $\varepsilon^{\prime}>0$ such that $y^{\prime}_t\geq 0$ for $t\in(\delta^k_{2,1},\delta^k_{2,1}+\varepsilon^{\prime})$. By \eqref{eq:s2yprime} we have
\begin{equation*}
z^1(t)\lambda(t)+z^2(t) d(t)\leq \lambda(t),\quad \text{a.e. in }[0,T].
\end{equation*}
By the definition of $\delta_{2,1}^k$ there exist a positive measure set $K\subset(\delta^k_{2,1},\delta^k_{2,1}+\varepsilon^{\prime})$ s.t. $\lambda(t)<d(t)$ for $t\in K$. Plugging this into the above inequality we have for $t\in K$
\begin{equation}\label{eq:s2leqd}
z^1(t)\lambda(t)+z^2(t) d(t)< d(t),\quad \text{a.e. in }[0,T].
\end{equation}
Therefore, by \eqref{eq:rhowdiff} and \eqref{eq:s2leqd} we have 
\begin{equation*}
\rho_t^{\prime}<0, \text{ for }a.e. \, t\in K
\end{equation*}
By Fundamental Theorem of Calculus we have $\rho_t<1$ for $t\in(\delta^k_{2,1},\delta^k_{2,1}+\varepsilon^{\prime})$, which is exactly state 1. Similarly, if $\ga^k_2=\delta^k_{2,3}$, the next state will be \textit{state 3}.   We can see that $\ga^k_2$ is unique if $\delta^k_2$ and $z^1(u)\lambda(u)+z^2(u) d(u)$ is known for $u\in[0, \delta^k_2)$.

\emph{State 3}: $\rho_t=1, 0<\eta_t<\beta$. We need to identify $y,d$ and $\ga^k_3$ in this state. From \eqref{eq:w2tbd} and \eqref{eq:w3tbd} we know that for $t \in(\delta^k_3,\ga^k_3)$
\begin{equation}\label{eq:s3w2w3}
z^2(t)=1,\, z^3(t)=1,\quad\text{a.e. in }[0,T].
\end{equation}
Plugging \eqref{eq:s3w2w3} into \eqref{eq:rhowdiff}, we have
\begin{equation*}
\rho_t^{\prime}=z^1(t)\lambda(t)\geq0, \quad \text{a.e. in }[0,T].
\end{equation*}
Since $\rho_t\leq1$, we have $\rho_t^{\prime}=0$ for $t\in(\delta^k_3,\ga^k_3)$. Consequently 
\begin{equation}\label{eq:s3w1}
z^1(t)\lambda(t)=0, \text{ for  a.e. }\, t \in(\delta^k_3,\ga^k_3)
\end{equation}
Therefore, plugging \eqref{eq:s3w2w3}-\eqref{eq:s3w1} into \eqref{eq:dwdiff} we obtain for almost every $t\in[0,T]$
\begin{equation}\label{eq:s3dt}
d(t)=\min\{r_0,1\}f(t)+\int_{0}^{\delta^k_3}\left(z^1(u)\lambda(u)+z^2(u) d(u)\right) g(t-u)  d u+\int_{\delta^k_3}^{t}d(u) g(t-u)  d u, 
\end{equation}
By \cite[Thm 6.3.1]{brunner2017volterra} there exist a unique solution $d(t)$ to the Volterra integral equation \eqref{eq:s3dt} for $t \in(\delta^k_3,\ga^k_3)$. With this solution and \eqref{eq:s3w2w3}, \eqref{eq:s3w1} we can obtain 
\begin{equation*}
\eta_t=\eta_{\delta^k_3}+\int_{\delta^k_3}^t \lambda(u)-d(u) du.
\end{equation*}
Define $\delta^k_{3,2}=\inf_{t>\delta^k_3}\{t:\eta_t=0\}$ and $\delta^k_{3,4}=\inf_{t>\delta^k_3}\{t:\eta_t=\beta\}$. Then $\ga^k_3=\min[\delta^k_{3,2},\delta^k_{3,4}]$. If $\ga^k_3=\delta^k_{3,2}$, the next state will be \textit{state 2}. If $\ga^k_3=\delta^k_{3,4}$, the next state will be \textit{state 4}. We can see that $\ga^k_3$ is unique if $\delta^k_3$, $\eta_{\delta^k_3}$ and $z^1(u)\lambda(u)+z^2(u) d(u)$ is known for $u\in[0, \delta^k_3)$.

\emph{State 4}: $\rho_t=1,\eta_t=\beta$. We need to identify $d$ and $\ga^k_4$ in this state. From \eqref{eq:w2tbd} we have for $t \in(\delta^k_4,\ga^k_4)$, $z^2(t)=1,\,a.e.$. Similar to the argument leading to \eqref{eq:s3w1} we obtain
\begin{equation}\label{eq:s4w1}
z^1(t)\lambda(t)=0 \quad \text{for a.e. } \, t \in(\delta^k_4,\ga^k_4).
\end{equation}
Substituting $z^1\lambda$ and $z^2$ with their values in \eqref{eq:dwdiff} we have for almost every $t\in[0,T]$
\begin{equation}\label{eq:s4dt}
d(t)=\min\{r_0,1\}f(t)+\int_{0}^{\delta^k_4}\left(z^1(u)\lambda(u)+z^2(u) d(u)\right) g(t-u)  d u+\int_{\delta^k_4}^{t}d(u) g(t-u)  d u, \quad 
\end{equation}
By \cite[Thm 6.3.1]{brunner2017volterra} again there exist a unique solution $d(t)$ to the Volterra integral equation \eqref{eq:s4dt} for $t \in(\delta^k_4,\ga^k_4)$. Plugging this solution, \eqref{eq:s4w1} and $z^2(t)=1$ into \eqref{eq:ywn} we have
\begin{equation}\label{eq:s4yt}
\beta=\eta_{\delta^k_4}+\int_{\delta^k_4}^t z^3(u)\lambda(u)-d(u) du.
\end{equation}
Differentiating both side of \eqref{eq:s4yt} we get
\begin{equation*}
z^3(t)\lambda(t) =d(t) \quad \text{for a.e. } \, t\in(\delta^k_4,\ga^k_4).
\end{equation*}
It is easy to see that when $d(t)>\lambda(t)$, there does not exist $z^3(t)$ satisfies \eqref{eq:w3tbd}. Therefore,%\eqref{eq:sigmap=sigma} we can conclude that 
$$\ga^k_4=\sup_{t>\delta^k_4}\{t:d(s)\leq\lambda(s), \text{ for a.e. }\,s\in(\delta^k_4,t)\}.$$
We can see that $\ga^k_4$ is unique if $\delta^k_4$ and $z^1(u)\lambda(u)+z^2(u) d(u)$ is known for $u\in[0, \delta^k_4)$. The next state can only be \textit{State 3}.

Note that in every state above one can obtain unique $(\rho_t,\eta_t,d(t))$. Additionally, in every state above one can obtain almost surely either $(z^1(t),z^2(t))$ or $z^1(u)\lambda(u)+z^2(u) d(u)$ and thus unique $\ga^k_i, i=1,2,3,4$. Since $k$ is arbitrary, we construct a unique solution $(\rho_t,\eta_t,d(t))$ to the system \eqref{eq:rhown}-\eqref{eq:solbdn}. Indeed, if there exist two different solution $\rho^1_t,\eta_{t,1},d_1(t)$ and $\rho^2_t,\eta_{t,2},d_2(t)$ satisfying \eqref{eq:rhown}-\eqref{eq:dwn}, the first time they differ must be one of those $\delta_i^k$ or $\ga_i^k$. However, this violates the uniqueness established above in each state and leads to a contradiction. Therefore, we obtain the uniqueness of $\rho,\eta,d$ and the resulting solution satisfies \eqref{eq:rhown}-\eqref{eq:dwn} and 
\begin{align*}
	\left\{\begin{array}{ll}
		\textit{State 1}, & z^1(t)=1,z^2(t)d(t)=0,z^3(t)=1,\\
		\textit{State 2}, &z^1(t)\lambda(t)+z^2(t)d(t)=\lambda(t),z^3(t)=1,\\
\textit{State 3}, & z^1(t)\lambda(t)=0,z^2(t)=1,z^3(t)=1,\\
\textit{State 4}, & z^1(t)\lambda(t)=0,z^2(t)=1,{z^3(t)\la(t)=d(t)}.
	\end{array}
	\right.
\end{align*}
\end{proof}
{Similar to Theorem~\ref{thm:solVolw}, we now provide possible solutions to the auxiliary functions $(z^1, z^2, z^3)$.}
\begin{theorem}\label{thm:solVolnw}
	Under the setting of Theorem~\ref{asm:initial}, {the solution to \eqref{eq:rhown}-\eqref{eq:solbdn} satisfies
\begin{align}\label{eq:z123}
	\left\{\begin{array}{ll}
		z^1(t)=1,z^2(t)d(t)=0,z^3(t)=1,&\rho_t<1,\eta_t=0\\
		z^1(t)\la(t)+z^2(t)d(t)=\la(t),z^3(t)=1,&\rho_t=1,\eta_t=0\\
z^1(t)\la(t)=0,z^2(t)=1,z^3(t)=1,&\rho_t=1,0<\eta_t<\beta\\
z^1(t)\la(t)=0,z^2(t)=1,z^3(t)\la(t)=d(t),\,&\rho_t=1,\eta_t=\beta
	\end{array}
	\right.
\end{align}
In particular,} the tuple $(\rho,\eta,d,z^1,z^2,z^3)$ satisfying \eqref{eq:rhown}-\eqref{eq:w3tbd} and
\begin{align*}%\label{eq:asolVol}
	\left\{\begin{array}{ll}
		z^1(t)=1,z^2(t)=0,z^3(t)=1,&\rho_t<1,\eta_t=0\\
		z^1(t)=1,z^2(t)=0,z^3(t)=1,&\rho_t=1,\eta_t=0\\
z^1(t)=0,z^2(t)=1,z^3(t)=1,&\rho_t=1,0<\eta_t<\beta\\
z^1(t)=0,z^2(t)=1,z^3(t)=d(t)/\lambda(t)\wedge1,\,&\rho_t=1,\eta_t=\beta
	\end{array}
	\right.
\end{align*}
is a solution to the system \eqref{eq:rhown}-\eqref{eq:solbdn}. {Moreover, the functions $z^1\lambda+z^2d$ and $z^3\la$ are unique almost surely.}
\end{theorem}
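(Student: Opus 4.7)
The plan is to read off the statements directly from the four-state analysis already performed in the proof of Theorem~\ref{thm:solVoln}, rather than redoing the case work. That analysis fixes a unique $(\rho,\eta,d)$ and on the four disjoint subsets of $[0,T]$ corresponding to the four states of $(\rho_t,\eta_t)$ identifies which combinations of $z^1,z^2,z^3$ are forced by the system \eqref{eq:rhown}--\eqref{eq:solbdn}. Translating those conclusions directly yields \eqref{eq:z123}: on $\{\rho_t<1,\eta_t=0\}$ the constraint $\mathbb{1}_{\{\rho<1\}}\le z^1\le 1$ forces $z^1=1$ a.e., continuity of $\eta$ together with \eqref{eq:ywn} forces $z^2 d=0$ a.e., and the indicator bound on $z^3$ gives $z^3=1$; on $\{\rho_t=1,\eta_t=0\}$ the identity $\eta_t\equiv 0$ combined with \eqref{eq:ywn} and $z^3=1$ yields the balance $z^1\la+z^2 d=\la$; on $\{\rho_t=1,0<\eta_t<\beta\}$ the indicator bounds give $z^2=z^3=1$, and $\rho_t\equiv 1$ combined with \eqref{eq:rhowdiff} gives $z^1\la=0$; finally on $\{\rho_t=1,\eta_t=\beta\}$, \eqref{eq:s4yt} combined with $z^2=1$ and $z^1\la=0$ yields $z^3\la=d$.

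Next I would verify that the explicitly proposed tuple $(z^1,z^2,z^3)$ is a valid solution. This reduces to two checks: that the bounds \eqref{eq:w1tbd}--\eqref{eq:w3tbd} hold on every state, and that plugging these $z^i$ into \eqref{eq:rhown}--\eqref{eq:dwn} reproduces the unique $(\rho,\eta,d)$ already constructed in Theorem~\ref{thm:solVoln}. The bound checks are straightforward in States~1--3. The only delicate point is State~4, where the choice $z^3(t)=d(t)/\la(t)\wedge 1$ plainly satisfies $0\le z^3\le 1$, but consistency with \eqref{eq:z123} additionally requires $z^3\la=d$; for this, the key observation, already contained in the proof of Theorem~\ref{thm:solVoln}, is that State~4 persists only while $d\le\la$ by the very definition of $\ga^k_4$, so on State~4 the truncation by $1$ is inactive and the identity $z^3\la=d$ holds a.e. With these $z^i$ fixed, the Volterra identities \eqref{eq:rhown}--\eqref{eq:dwn} then follow from the state-by-state differential identifications already carried out in Theorem~\ref{thm:solVoln}.

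For the final claim, the almost-everywhere uniqueness of $z^1\la+z^2 d$ and $z^3\la$, I would note that in each state these combinations are pinned down: on States~1 and 2, $z^1\la+z^2 d=\la$ and $z^3\la=\la$; on State~3, $z^1\la+z^2 d=d$ and $z^3\la=\la$; on State~4, $z^1\la+z^2 d=d$ and $z^3\la=d$. Since the partition of $[0,T]$ into the four states is determined by the unique $(\rho,\eta)$ from Theorem~\ref{thm:solVoln}, these two combinations are uniquely determined a.e.\ on $[0,T]$, independently of any remaining indeterminacy in the individual $z^i$ on the boundary states $\{\rho=1\}$ or $\{\eta=0\}$.

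The main obstacle here is conceptual rather than computational: the indicator bounds \eqref{eq:w1tbd}--\eqref{eq:w3tbd} give direct information about $z^i$ only where the corresponding indicator equals $1$, while on the boundary states those bounds degenerate to $0\le z^i\le 1$. On those states the effective information about $z^i$ must instead come from differentiating the Volterra equations and exploiting the constancy of $\rho_t$ or $\eta_t$, which is exactly why the natural uniqueness object is the combination $z^1\la+z^2 d$ (respectively $z^3\la$) rather than the individual $z^i$.
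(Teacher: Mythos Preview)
Your proposal is correct and follows essentially the same approach as the paper: read off \eqref{eq:z123} from the four-state analysis in the proof of Theorem~\ref{thm:solVoln}, then select admissible values of the $z^i$ in each state to exhibit the explicit solution, and observe that $z^1\lambda+z^2 d$ and $z^3\lambda$ are pinned down state-by-state. Your treatment is in fact more careful than the paper's in spelling out why the truncation in $z^3=d/\lambda\wedge 1$ is inactive on State~4 and in tabulating the unique values of the two combinations on each state.
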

\begin{proof}
From the proof of Theorem~\ref{thm:solVoln} we have $(z^1,z^2,z^3)$ satisfy
\begin{align*}
	\left\{\begin{array}{ll}
		\textit{State 1}, & z^1(t)=1,z^2(t)d(t)=0,z^3(t)=1,\\
		\textit{State 2}, &z^1(t)\lambda(t)+z^2(t)d(t)=\lambda(t),z^3(t)=1,\\
\textit{State 3}, & z^1(t)\lambda(t)=0,z^2(t)=1,z^3(t)=1,\\
\textit{State 4}, & z^1(t)\lambda(t)=0,z^2(t)=1,z^3(t)=d(t)/\lambda(t)\wedge1.
	\end{array}
	\right.
\end{align*}
{It is easy to see that $z^1\lambda+z^2d$ and $z^3\la$ are unique almost surely.} Choosing $z^1=1,z^2=0$ in state $2$ and, $z^1=0$ in state $3$ and $4$, we get our desired result. 
\end{proof}

\begin{theorem}\label{thm:fluidlimitbuf}
	Let Assumption~\ref{asm:initial} hold. Then
\begin{enumerate}[label = (\roman*), wide, labelwidth = !, labelindent = 0pt]
\item For any $T > 0$, there exist real-valued continuous deterministic processes $\rho,\eta,D$ such that almost surely
	\begin{equation}\label{eq:limitunif}
		\lim_{n\rightarrow\infty}\sup_{t\in[0,T]}\left|\bar{S}^n_t-\rho_t\right|=0,\quad \lim_{n\rightarrow\infty}\sup_{t\in[0,T]}\left|\bar{Q}^n_t-\eta_t\right|=0,\quad \lim_{n\rightarrow\infty}\sup_{t\in[0,T]}\left|\bar{D}_t^{n}-D_t\right|=0.
	\end{equation} 
\item %\hr{(Is this ok?)}
{Moreover, there exist bounded functions $w^{1}, w^{2}, w^{3}$ such that almost surely}
{\begin{align}\label{eq:rhoydweak*wbuf}
&\mathbb{1}_{\{\bar{S}^{n}_{t-}<1\}}\la(t)+\mathbb{1}_{\{\bar{Q}^{n}_{t-}>0\}}d(t)\stackrel{*}{\rightharpoonup}w^1(t)\lambda(t)+w^2(t)d(t),\quad \text{and} \nonumber\\
&\mathbb{1}_{\{\bar{Q}^{n}_{t-}<{b_{n}}/{n}\}}\la(t)\stackrel{*}{\rightharpoonup}w^3(t)\la(t), \quad \text{in } L^\infty[0,T],
\end{align}
where $w^1,w^2,w^3$ satisfy \eqref{eq:z123}.} 
\item {Furthermore, $(\rho,\eta,D,w^1,w^2,w^3)$ defined in \eqref{eq:limitunif}-\eqref{eq:rhoydweak*wbuf} satisfy}
		\begin{align}
		&\rho_{t}=\min\{r_0,1\}\bar{F}(t)+\int_{0}^{t} w^{1}(u) \bar{G}(t-u) \lambda(u) d u  +\int_{0}^{t} w^{2}(u) \bar{G}(t-u) d(u) d u, \label{eq:rhow3}\\
		& \eta_{t}=\max\{r_0-1,0\}+\int_{0}^{t} (1-w^1(u))w^{3}(u) \lambda(u) d u  -\int_{0}^{t} w^{2}(u) d(u) d u, \label{eq:yw3}\\
		& D_t=\min\{r_0,1\}F(t)+\int_{0}^{t}w^{1}(u) G(t-u) \lambda(u) d u  +\int_{0}^{t} w^{2}(u) G(t-u) d(u) d u,\label{eq:dw3}	
\end{align}
and for almost every $t\in[0,T]$
\begin{equation*}
\mathbb{1}_{\{\rho_t<1\}} \leq w^1(t) \leq 1,\quad \mathbb{1}_{\left\{\eta_{t}>0\right\}} \leq w^2(t) \leq 1,\quad\mathbb{1}_{\{\eta_t<\beta\}}\leq w^3(t) \leq 1.
\end{equation*}
That is, $(\rho,\eta,d,w^1,w^2,w^3)$ as in \eqref{eq:rhow3}-\eqref{eq:dw3} is a solution, interpreted according to Definition~\ref{def:sol_Vol}, to the following non-linear discontinuous Volterra integral equation
\begin{align}
		&\rho_{t}=\min\{r_0,1\}\bar{F}(t)+\int_{0}^{t} \mathbb{1}_{\{\rho_{u-}<1\}} \bar{G}(t-u) \lambda(u) d u  +\int_{0}^{t} \mathbb{1}_{\{\eta_{u-}>0\}} \bar{G}(t-u) d(u) d u, \label{eq:rhoVolnbuf}\\
		& \eta_{t}=\max\{r_0-1,0\}+\int_{0}^{t} \mathbb{1}_{\{\rho_{u-}=1\}} \mathbb{1}_{\{\eta_{u-}<\beta\}} \lambda(u) d u  -\int_{0}^{t} \mathbb{1}_{\{\eta_{u-}>0\}} d(u) d u, \label{eq:etaVolnbuf}\\
		& D_t=\min\{r_0,1\}F(t)+\int_{0}^{t} \mathbb{1}_{\{\rho_{u-}<1\}} G(t-u) \lambda(u) d u  +\int_{0}^{t} \mathbb{1}_{\{\eta_{u-}>0\}} G(t-u) d(u) d u.\label{eq:DVolnbuf}
	\end{align}

\end{enumerate}
\end{theorem}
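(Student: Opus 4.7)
The plan is to bootstrap the subsequential convergence of Proposition~\ref{prop:weaklimitsinif} into full convergence along the entire sequence by invoking the uniqueness results of Theorems~\ref{thm:solVoln} and~\ref{thm:solVolnw}, exactly as was done for the zero-buffer case in Theorem~\ref{thm:fluidlimitini0uni}.

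For \emph{Part (i)}, I would argue by the standard subsequence principle. Fix any subsequence of $(n)$. By Proposition~\ref{prop:weaklimitsinif}.(i)-(iii), there exists a further subsequence $(r_k)$ and (a priori, possibly stochastic) continuous processes $\rho,\eta,D$ together with bounded auxiliary processes $w^1,w^2,w^3$ such that, almost surely along $(r_k)$, $(\bar{S}^{r_k},\bar{Q}^{r_k},\bar{D}^{r_k})$ converges to $(\rho,\eta,D)$ in the uniform topology and the tuple path-by-path solves the discontinuous Volterra system \eqref{eq:rhoVolnbuf}-\eqref{eq:DVolnbuf} in the sense of Definition~\ref{def:sol_Vol}. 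But Theorem~\ref{thm:solVoln} asserts that this Volterra system has a unique solution $(\rho,\eta,D)$, so this limit is in fact deterministic and independent of the subsequence. Since every subsequence admits a further subsequence converging to the same deterministic limit, the whole sequence $(\bar{S}^n,\bar{Q}^n,\bar{D}^n)$ converges almost surely uniformly to $(\rho,\eta,D)$.

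For \emph{Part (ii)}, the subtlety is that the individual indicators need not have unique weak-$*$ limits, because $w^1,w^2,w^3$ themselves are not pinned down pointwise by the Volterra system; only the state-dependent combinations are. Following the same subsequence principle as above, Proposition~\ref{prop:weaklimitsinif}.(ii) yields along a further subsequence $(r_k)$ bounded $w^1,w^2,w^3$ with $\mathbb{1}_{\{\bar S^{r_k}_{t-}<1\}}\stackrel{*}{\rightharpoonup}w^1$, $\mathbb{1}_{\{\bar Q^{r_k}_{t-}>0\}}\stackrel{*}{\rightharpoonup}w^2$, and $\mathbb{1}_{\{\bar Q^{r_k}_{t-}<b_{r_k}/r_k\}}\stackrel{*}{\rightharpoonup}w^3$ in $L^\infty[0,T]$. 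Multiplying the first two by $\lambda$ and $d$ respectively and adding, and multiplying the third by $\lambda$, Theorem~\ref{thm:solVolnw} guarantees that the limits $w^1\lambda+w^2 d$ and $w^3\lambda$ are unique almost everywhere. Consequently the weak-$*$ limits in \eqref{eq:rhoydweak*wbuf} are deterministic and independent of the extracted subsequence, so once again the whole sequence converges weakly-$*$ to these combinations; the piecewise characterization \eqref{eq:z123} is inherited directly from Theorem~\ref{thm:solVolnw}.

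Finally, \emph{Part (iii)} is an immediate consequence of combining the identifications of $(\rho,\eta,D)$ and $(w^1,w^2,w^3)$ above with Proposition~\ref{prop:weaklimitsinif}.(iii): the limit tuple satisfies the coupled Volterra integral representation \eqref{eq:rhow3}-\eqref{eq:dw3} and, because the inequalities $\mathbb{1}_{\{\rho_t<1\}}\le w^1(t)\le 1$, $\mathbb{1}_{\{\eta_t>0\}}\le w^2(t)\le 1$, $\mathbb{1}_{\{\eta_t<\beta\}}\le w^3(t)\le 1$ are preserved in the limit, it is a solution to the discontinuous Volterra system \eqref{eq:rhoVolnbuf}-\eqref{eq:DVolnbuf} in the sense of Definition~\ref{def:sol_Vol}. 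The main obstacle throughout is the non-uniqueness of the individual auxiliary functions $w^1,w^2$: one must carefully formulate the weak-$*$ statement in terms of the combinations that are actually determined by the Volterra system, which is precisely what \eqref{eq:rhoydweak*wbuf} does. The rest is a routine transcription of the zero-buffer argument to the three-component coupled setting.
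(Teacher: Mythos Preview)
Your proposal is correct and follows essentially the same approach as the paper: the subsequence principle combined with the uniqueness from Theorems~\ref{thm:solVoln} and~\ref{thm:solVolnw} upgrades Proposition~\ref{prop:weaklimitsinif} to full-sequence convergence in all three parts. You also correctly identify the key subtlety in Part~(ii), namely that only the combinations $w^1\lambda+w^2d$ and $w^3\lambda$ are uniquely determined, which is precisely why the paper states the weak-$*$ convergence for those combinations rather than for the individual indicators.
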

\begin{proof}
{\emph{Part (i)}.} From Proposition~\ref{prop:weaklimitsinif}, for any subsequence there exists a further subsequence $(r_k)$ such that almost surely{
\begin{equation*}%\label{eq:limitsub}
\bar{S}^{r_k}_t\rightarrow\rho_t,\quad \bar{Q}^{r_k}_t\rightarrow\eta_t, \quad\bar{D}_t^{r_k}\rightarrow D_t,
	\end{equation*} 
in the uniform topology,} where $(\rho,\eta,D)$ solve \eqref{eq:rhoVolnbuf}-\eqref{eq:DVolnbuf} { path by path}. By Theorem~\ref{thm:solVoln}, $(\rho,\eta,D)$ is unique. {Consequently, the limiting functions $(\rho,\eta,D)$ are deterministic. Moreover, by the uniqueness of $(\rho,\eta,D)$ again we can conclude that} the entire sequence $(\bar{S}^n,\bar{Q}^n,\bar{D}^n)$ converges to $(\rho,\eta,D)$ almost surely in the uniform topology. 

\vspace{0.1in}
\noindent
{\emph{Part (ii)}. By Proposition~\ref{prop:weaklimitsinif} we have for every subsequence there exists a subsubsequence $(r_k)$ and bounded, possibly stochastic processes $w^1,w^2,w^3$ such that almost surely
\begin{equation*}
\mathbb{1}_{\{\bar{S}^{r_k}_{t-}<1\}}\stackrel{*}{\rightharpoonup}w^1(t), \quad 
\mathbb{1}_{\{\bar{Q}^{r_k}_{t-}>0\}}\stackrel{*}{\rightharpoonup}w^2(t), \quad
\mathbb{1}_{\{\bar{Q}^{r_k}_{t-}<{b_{r_k}}/{r_k}\}}\stackrel{*}{\rightharpoonup}w^3(t), \quad \text{in } L^\infty[0,T].
\end{equation*}
Consequently,
\begin{align}\label{eq:rydwk*wninibf}
&\mathbb{1}_{\{\bar{S}^{r_k}_{t-}<1\}}\la(t)+\mathbb{1}_{\{\bar{Q}^{r_k}_{t-}>0\}}d(t)\stackrel{*}{\rightharpoonup}w^1(t)\lambda(t)+w^2(t)d(t),\quad \text{and} \nonumber\\
&\mathbb{1}_{\{\bar{Q}^{r_k}_{t-}<{b_{r_k}}/{r_k}\}}\la(t)\stackrel{*}{\rightharpoonup}w^3(t)\la(t), \quad \text{in } L^\infty[0,T].
\end{align}
By Theorem~\ref{thm:solVolnw} we know that $w^1\la+w^2d$ and $w^3\la$ is unique. Therefore the weak-star convergence in \eqref{eq:rydwk*wninibf} hold for the entire sequence.}

\vspace{0.1in}
\noindent
{\emph{Part (iii)}. This follows directly from Proposition~\ref{prop:weaklimitsinif}.(iii) and Theorem~\ref{thm:solVoln}-\ref{thm:solVolnw}.
}
\end{proof}

Similar to Corollary~\ref{cor:acceptprob}, {an asymptotic result of the acceptance probability can be obtained. We state} the following result {without proof}.

\begin{corollary}\label{cor:acceptprobn}
The acceptance probability of the $n$-th $M_t/G/n/{n+}b_n$ model $\mathbb{P}(\bar{Q}^n_{t-}<\frac{b_n}{n})$ satisfies the following convergence
\begin{equation*}
\mathbb{P}\left(\bar{Q}^n_{u-}<\frac{b_n}{n}\right)\rightarrow w^3(u),\quad {\text{for $\la$-almost every } u\in[0,T], }
\end{equation*}
where $w^3$ is defined in Theorem~\ref{thm:fluidlimitbuf}.
\end{corollary}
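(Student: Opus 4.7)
The plan is to mirror the proof of Corollary~\ref{cor:acceptprob} essentially verbatim, substituting the indicator $\mathbb{1}_{\{\bar{N}^n_{u-}<1\}}$ and its weak-star limit $w$ by $\mathbb{1}_{\{\bar{Q}^n_{u-}<b_n/n\}}$ and $w^3$, using the tools now available for the finite-buffer system. The backbone is: (i) tightness of the indicator sequence in Skorokhod space, (ii) identification of the weak limit via the weak-star convergence supplied by Theorem~\ref{thm:fluidlimitbuf}.(ii), (iii) upgrade to convergence in probability thanks to the deterministic character of $w^3\la$, and (iv) extraction of expectations via uniform integrability.

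First I would verify tightness of $(\mathbb{1}_{\{\bar{Q}^n_{u-}<b_n/n\}})$ in $\mathbb{D}[0,T]$. Since the jumps of $\bar{Q}^n$ are bounded in number by twice the number of Poisson arrivals on $[0,T]$, the indicator is piecewise constant with almost surely finitely many jumps, and the modulus criterion $w'_f(\delta)=0$ holds trivially, so Theorem~\ref{thm:tightness} applies. Next, from Theorem~\ref{thm:fluidlimitbuf}.(ii) we have
$$
\mathbb{1}_{\{\bar{Q}^n_{u-}<b_n/n\}}\la(u)\stackrel{*}{\rightharpoonup}w^3(u)\la(u)\quad\text{in }L^\infty[0,T].
$$
Testing against delta-like functionals $\phi(\cdot)=\delta_{(\cdot)}$ converts this into convergence of finite-dimensional distributions of $\mathbb{1}_{\{\bar{Q}^n_{u-}<b_n/n\}}\la(u)$ to those of $w^3(u)\la(u)$. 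Combined with tightness, Theorem~\ref{thm:weaklyconv} promotes this to weak convergence in the Skorokhod topology.

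The crucial structural step is to invoke Theorem~\ref{thm:solVolnw}, which guarantees that $w^3\la$ is unique almost everywhere and is therefore deterministic. Consequently the weak convergence strengthens to $\mathbb{1}_{\{\bar{Q}^n_{u-}<b_n/n\}}\la(u)\stackrel{p}{\rightarrow}w^3(u)\la(u)$. Since the indicators are uniformly bounded by $1$ and $\la\in L^1[0,T]$, the sequence is uniformly integrable, and applying \cite[Thm 5.5.2]{durrett2019probability} gives, for almost every $u\in[0,T]$,
$$
\mathbb{P}(\bar{Q}^n_{u-}<b_n/n)\la(u)=\E[\mathbb{1}_{\{\bar{Q}^n_{u-}<b_n/n\}}]\la(u)\rightarrow w^3(u)\la(u).
$$
Dividing by $\la(u)$ on the set where it is positive yields the claim for $\la$-almost every $u\in[0,T]$.

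The only subtlety I anticipate, which is precisely what forces the qualifier ``$\la$-almost every $u$'', is that uniqueness in Theorem~\ref{thm:solVolnw} is asserted at the level of the product $w^3\la$ rather than $w^3$ itself; the function $w^3$ is genuinely unspecified on $\{\la=0\}$. Thus the argument must obtain convergence of the product first and only afterwards cancel $\la$, rather than attempting to pass to the limit on $\mathbb{1}_{\{\bar{Q}^n_{u-}<b_n/n\}}$ directly.
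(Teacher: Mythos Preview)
Your proposal is correct and follows essentially the same approach as the paper: the paper does not give a separate proof of this corollary but explicitly states it is obtained just as Corollary~\ref{cor:acceptprob}, and your argument is precisely that adaptation. The only cosmetic difference is that you work with the product $\mathbb{1}_{\{\bar{Q}^n_{u-}<b_n/n\}}\la$ from the outset (because Theorem~\ref{thm:fluidlimitbuf}.(ii) and Theorem~\ref{thm:solVolnw} are both stated at the level of $w^3\la$), whereas in the zero-buffer case the paper first handles the bare indicator and then multiplies by $\la$; your version is arguably the cleaner way to transcribe the argument to the finite-buffer setting.
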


\begin{remark}\label{rk:w3discontinuous}
Similar to Remark~\ref{rk:wdiscontinuous}, the function $w^3$ can be discontinuous even when $\lambda$ is continuous.
\end{remark}

\section{Numerics and Operational Perspectives}\label{sec:numerics}

\subsection{{Numerical Methods for Discontinuous VIE}} 

This section outlines a simple procedure to numerically solve the discontinuous VIEs \eqref{eq:rhoVolinin} and \eqref{eq:rhoVolnbuf}-\eqref{eq:DVolnbuf}, using an explicit Euler discretization. The main computational challenge lies in updating the auxiliary functions $z$ or {$(z^1,z^2,z^3)$} in tandem with the solution trajectories $\rho$ or $(\rho,\eta,d)$ at each iteration. Algorithm~\ref{alg:1} details the steps to solve \eqref{eq:rhoVolinin} following the solution framework described in Theorems~\ref{thm:solVol}-\ref{thm:solVolw}, while Algorithm~\ref{alg:2} extends this to the coupled system \eqref{eq:rhoVolnbuf}-\eqref{eq:DVolnbuf} using Theorems~\ref{thm:solVoln} - \ref{thm:solVolnw}.

\begin{breakablealgorithm}
\caption{VIE for Zero-Buffer Loss System} 
\label{alg:1}
\begin{algorithmic}[1] 
\State \textbf{Input:} Initial value $\rho_{t_0}$, time points $t_0, t_1, \dots, t_N$, functions $f, g, \bar{F}, \bar{G}, \lambda$. 
\State \textbf{Initialization:} Set $z(t_0)=0$. 
\For{$i = 0$ to $N-1$} 
\Statex \Comment{Determine $z$ values for time $t_{i+1}$ based on state at $t_i$}
\If{$\rho_{t_i} < 1$} 
\State $z_{t_{i+1}} \gets 1$ 
\ElsIf{$\rho_{t_i} = 1$} 
\State $z_{t_{i+1}} \gets \min\left(\frac{1}{\lambda(t_i)}\left(\rho_0 f(t_i) + \sum_{j=1}^{i} z(t_j)\lambda(t_j)g(t_i - t_j)\right), 1\right)$ 
\EndIf 
\Statex \Comment{Update state for time $t_{i+1}$ by discretizing integral equations}
\State $\rho_{t_{i+1}} \gets \rho_0\bar{F}(t_{i+1}) + \sum_{j=1}^{i+1} z(t_j)\lambda(t_j)\bar{G}(t_{i+1}-t_j)$ 
\EndFor 
\State \textbf{Output:} The sequence of values for $\rho$ and $z$. 
\end{algorithmic} 
\end{breakablealgorithm} 

\begin{breakablealgorithm}
\caption{VIE for Loss System with Buffer} 
\label{alg:2} 
\begin{algorithmic}[1] 
\State \textbf{Input:} Initial value $r_0$ or $(\rho_{t_0}, \eta_{t_0})$, threshold $\beta$, time points $t_0, t_1, \dots, t_N$, functions $f, g, \bar{F}, \bar{G}, \lambda$. 
\State \textbf{Initialization:} Set $(z^1_{t_0}, z^2_{t_0}, z^3_{t_0},d(t_0))=(0,0,0,0)$. 
\For{$i = 0$ to $N-1$} 
\Statex \Comment{Determine $z$ values for time $t_{i+1}$ based on state at $t_i$} \If{$\rho_{t_i} < 1$ and $\eta_{t_i} = 0$} 
\State $(z^1_{t_{i+1}}, z^2_{t_{i+1}}, z^3_{t_{i+1}}) \gets (1, 0, 1)$ 
\ElsIf{$\rho_{t_i} = 1$ and $\eta_{t_i} = 0$} 
\State $(z^1_{t_{i+1}}, z^2_{t_{i+1}}, z^3_{t_{i+1}}) \gets (1, 0, 1)$ 
\ElsIf{$\rho_{t_i} = 1$ and $0 < \eta_{t_i} < \beta$} 
\State $(z^1_{t_{i+1}}, z^2_{t_{i+1}}, z^3_{t_{i+1}}) \gets (0, 1, 1)$ 
\ElsIf{$\rho_{t_i} = 1$ and $\eta_{t_i} = \beta$} 
\State $(z^1_{t_{i+1}}, z^2_{t_{i+1}}) \gets (0, 1)$ 
\State $z^3_{t_{i+1}} \gets \min(d(t_i) / \lambda(t_i), 1)$ 
\EndIf \Statex \Comment{Update state for time $t_{i+1}$ by discretizing integral equations} \State $d_{t_{i+1}} \gets \min(r_0, 1)f(t_{i+1}) + \sum_{j=1}^{i+1} \left(z^1(t_j)\lambda(t_j)+ z^2(t_j)d(t_j)\right)g(t_{i+1}-t_j)$ 
\State $\rho_{t_{i+1}} \gets \min(r_0, 1)\bar{F}(t_{i+1}) + \sum_{j=1}^{i+1} \left(z^1(t_j)\lambda(t_j) + z^2(t_j)d(t_j)\right)\bar{G}(t_{i+1}-t_j)$ 
\State $\eta_{t_{i+1}} \gets \max(r_0-1, 0) + \sum_{j=1}^{i+1} \left[(1-z^1(t_j))z^3(t_j)\lambda(t_j) -  z^2(t_j)d(t_j)\right]$ 
\EndFor \State \textbf{Output:} The sequences of values for $(\rho, \eta, d)$ and $(z^1, z^2, z^3)$. 
\end{algorithmic} 
\end{breakablealgorithm} 

\subsubsection{Example 1: Zero-buffer Loss System}
We first solve the {VIE \eqref{eq:rhoVolini} using Algorithm~\ref{alg:1}}. The simulated system has $n = 150$ servers and Lognormal$(-0.5, 2)$ service times. Two types of arrival rates are used: \emph{periodic} with  $\lambda(t)=\sfrac{2}{3}(1+\sin(\sfrac{2\pi t}{10}))$ as in Figure~\ref{fig:periodic-zero-1}, and \emph{episodic} with $\la(t) = 0.005 \cdot t(T-t)$ as in Figure~\ref{fig:episodic-zero-1}. The simulated trajectory {$\bar{N}^n$} closely matches the fluid-limit solution $\rho$, as in Figures~\ref{fig:periodic-zero-2} and \ref{fig:episodic-zero-2}, confirming the convergence in Theorem~\ref{thm:fluidlimitini0uni}. Repeating the simulation over $R = 200$ {replications} shows that the empirical blocking probability $\bar{B}^n(t)$ aligns well with the theoretical $1 - w(t)$, supporting Corollary~\ref{cor:acceptprob}. Figures~\ref{fig:periodic-zero-3}-\ref{fig:periodic-zero-4} and \ref{fig:episodic-zero-3}-\ref{fig:episodic-zero-4} illustrate this relationship for $n=150$ and $n=5000$ for both arrival types.

\begin{figure}[h!]
\centering
\begin{subfigure}{0.45\textwidth}
\centering
\includegraphics[width=\linewidth]{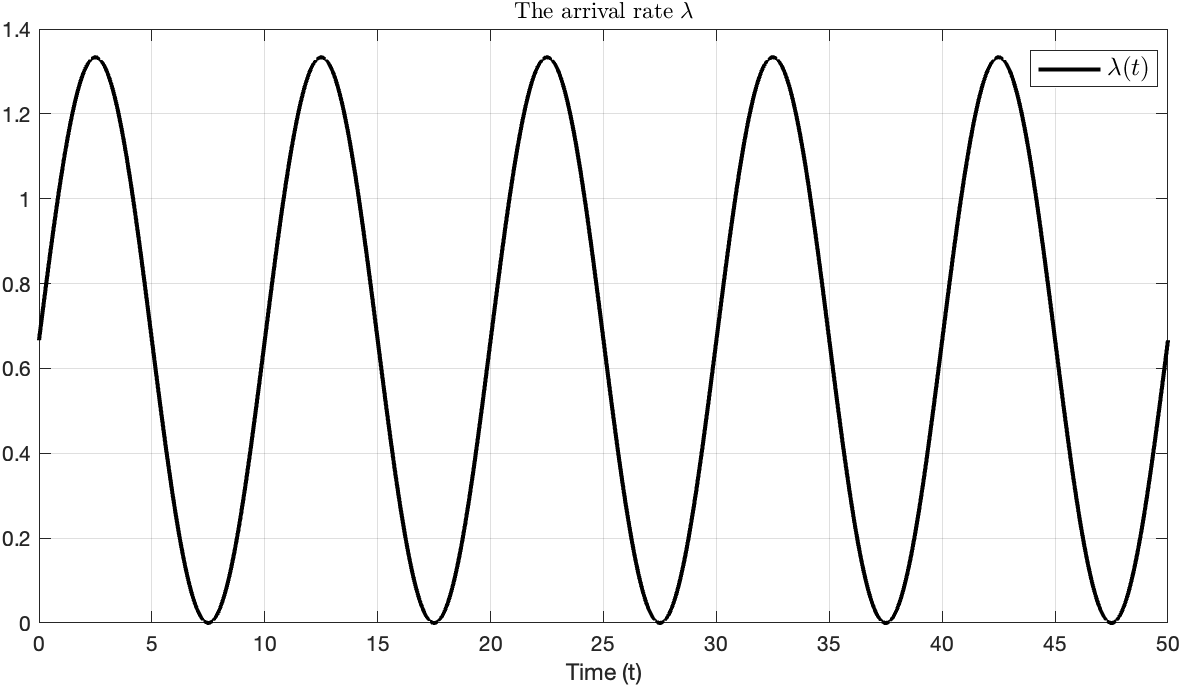}
\caption{}
\label{fig:periodic-zero-1}
\end{subfigure}%
\begin{subfigure}{0.45\textwidth}
\centering
\includegraphics[width=\linewidth]{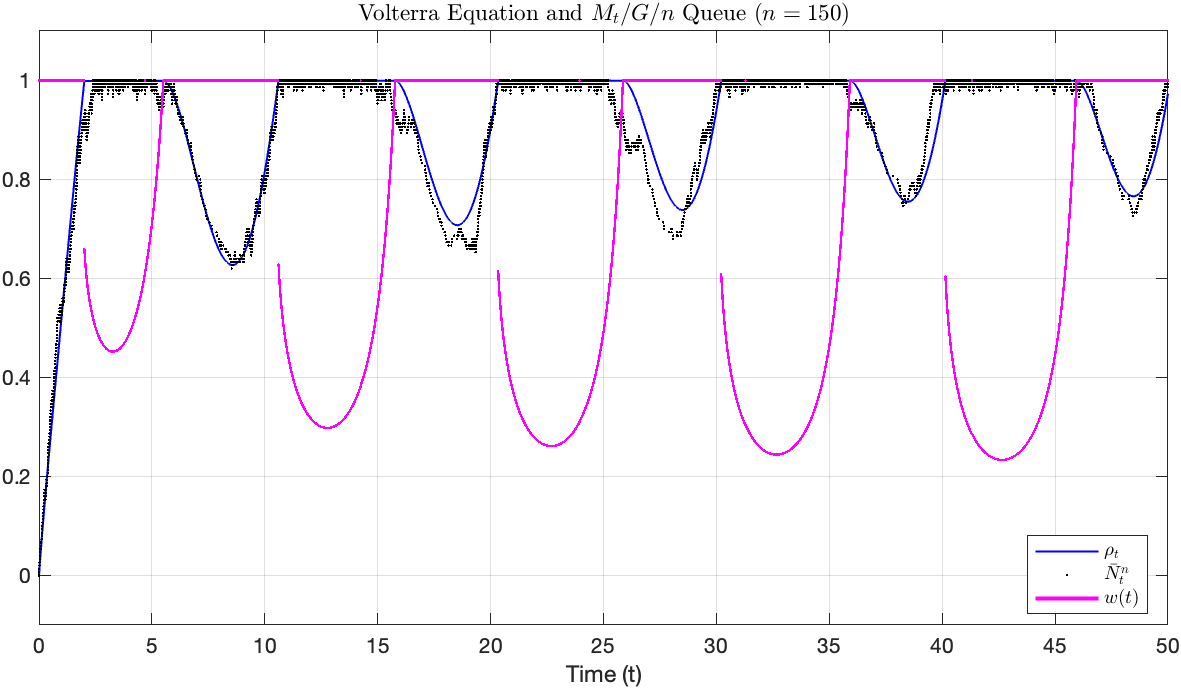}
\caption{}
\label{fig:periodic-zero-2}
\end{subfigure}
\begin{subfigure}{0.45\textwidth}
\centering
\includegraphics[width=\linewidth]{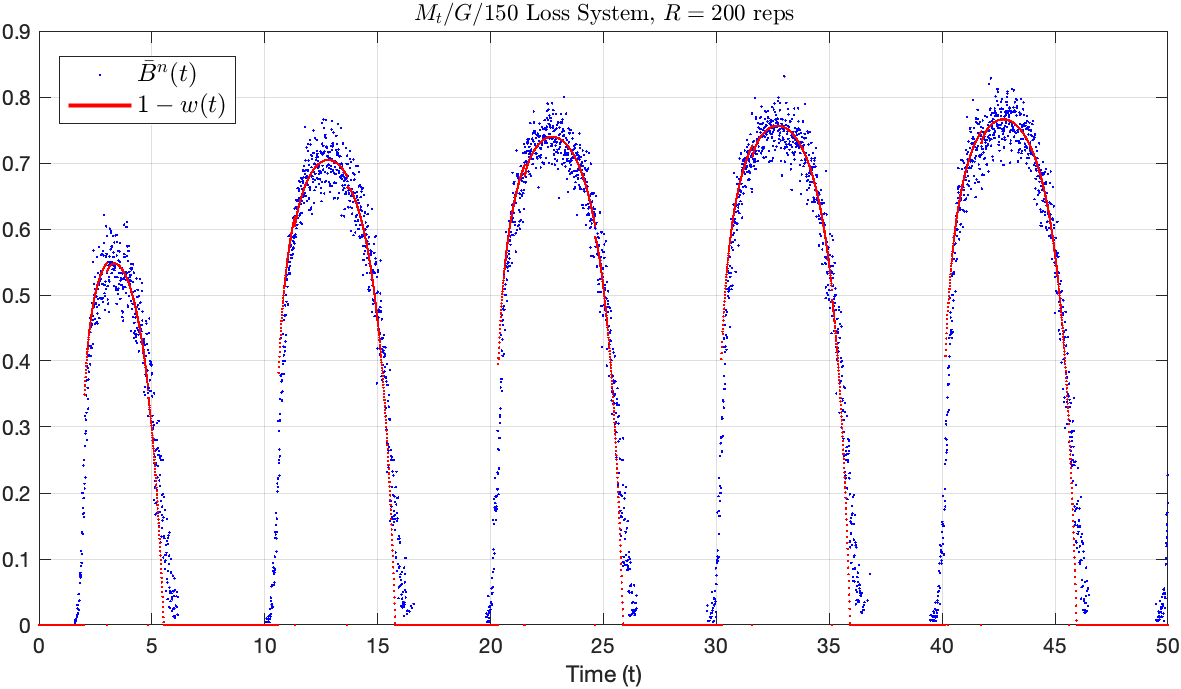}
\caption{}
\label{fig:periodic-zero-3}
\end{subfigure}%
\begin{subfigure}{0.45\textwidth}
\centering
\includegraphics[width=\linewidth]{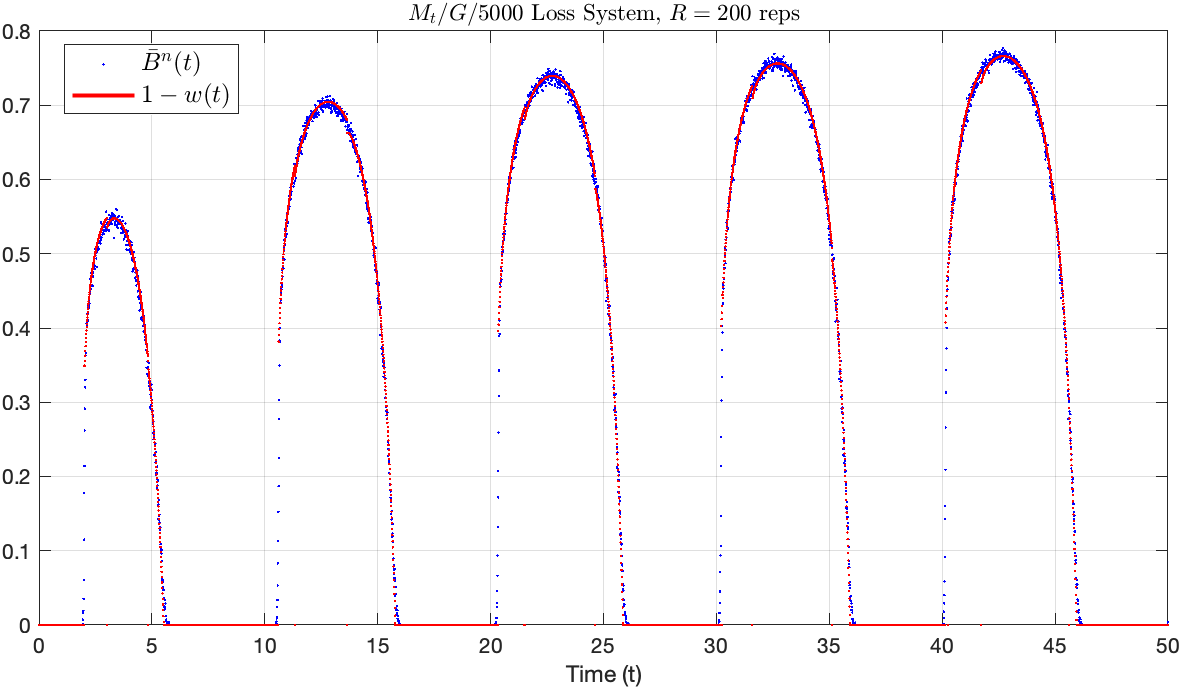}
\caption{}
\label{fig:periodic-zero-4}
\end{subfigure}
\caption{Zero-Buffer Loss Queue with Periodic Arrival Rate}
\label{fig:periodic-zero}
\end{figure}

\begin{figure}
\centering
\begin{subfigure}{0.45\textwidth}
\centering
\includegraphics[width=\linewidth]{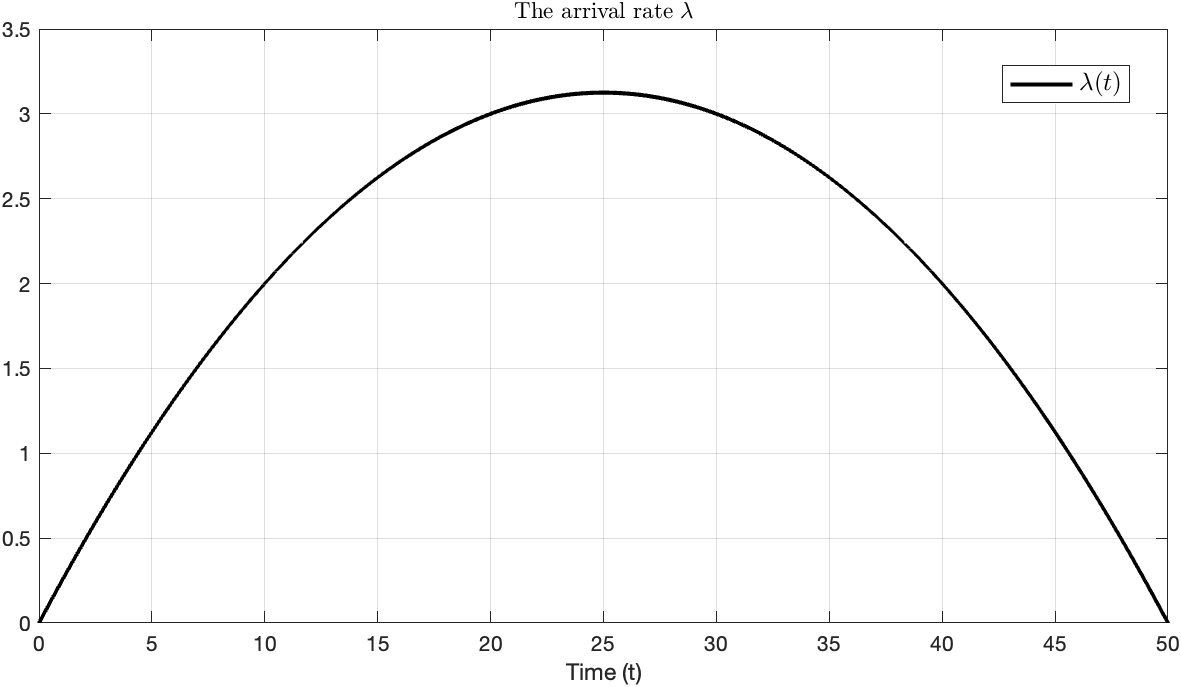}
\caption{}
\label{fig:episodic-zero-1}
\end{subfigure}%
\begin{subfigure}{0.45\textwidth}
\centering
\includegraphics[width=\linewidth]{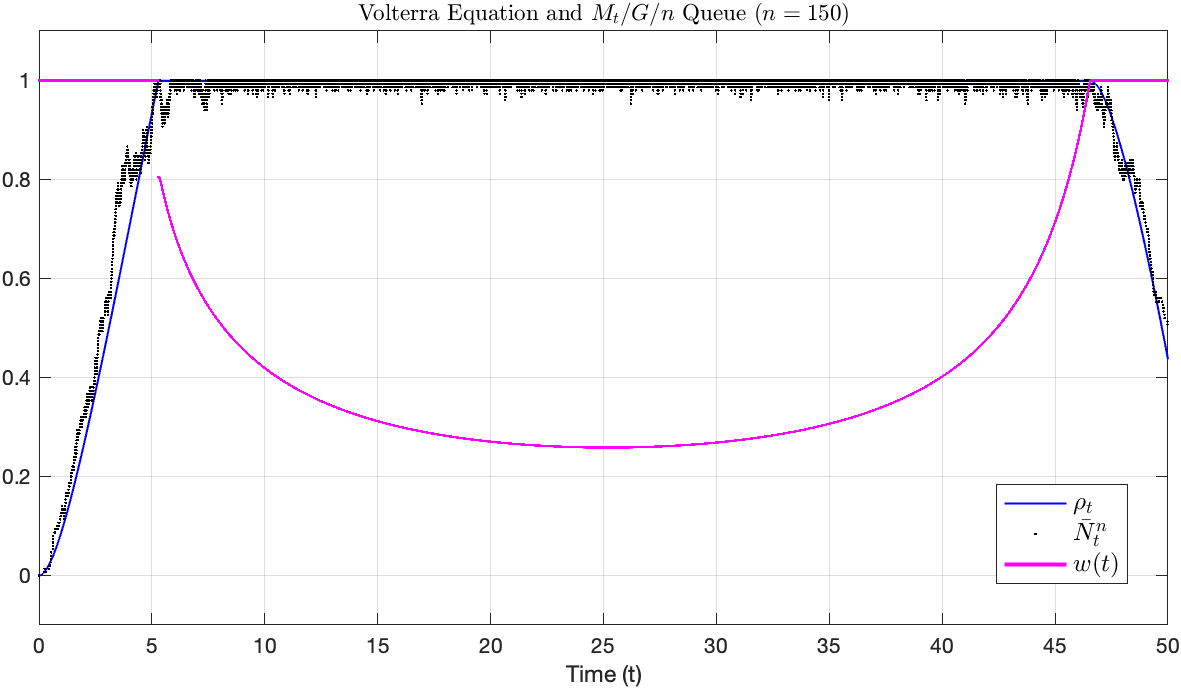}
\caption{}
\label{fig:episodic-zero-2}
\end{subfigure}
\begin{subfigure}{0.45\textwidth}
\centering
\includegraphics[width=\linewidth]{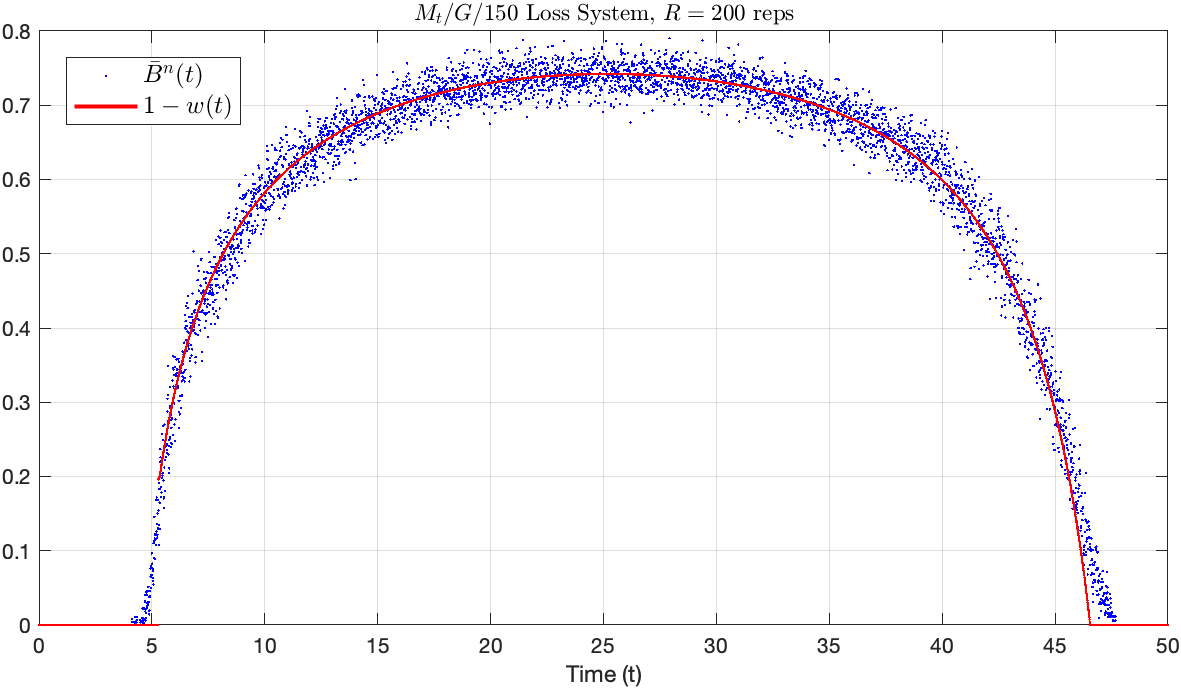}
\caption{}
\label{fig:episodic-zero-3}
\end{subfigure}
\begin{subfigure}{0.45\textwidth}
\centering
\includegraphics[width=\linewidth]{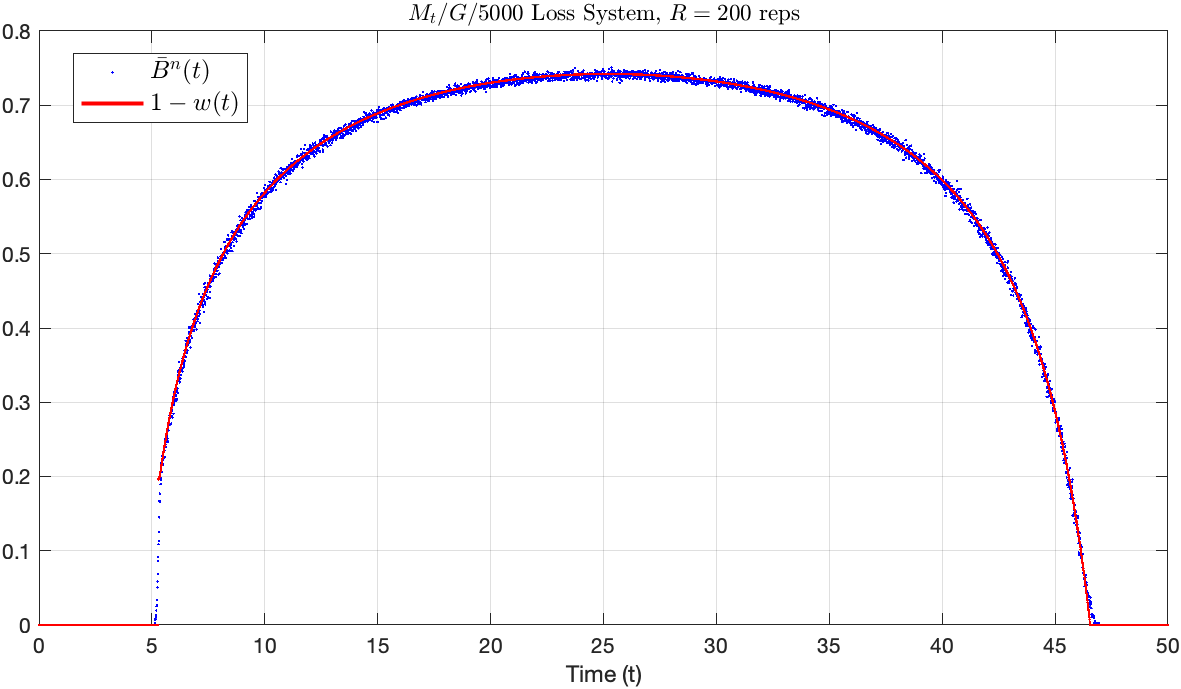}
\caption{}
\label{fig:episodic-zero-4}
\end{subfigure}
\caption{Zero-Buffer Loss Queue with Episodic Arrival Rate}
\label{fig:episodic-zero}
\end{figure}

\subsubsection{Example 2: Finite-buffer Loss System}
Next, we solve the VIE system \eqref{eq:rhoVoln-init} using Algorithm~\ref{alg:2}. Again, the system has $n = 150$ servers and Lognormal$(-0.5, 1.2)$ service times, with two types of arrival rates: \emph{periodic} with $\lambda(t)=\frac{2}{3}(1.5+\sin(\tfrac{2\pi t}{10}))$ as in Figure~\ref{fig:periodic-finite-1} and \emph{episodic} with $\la(t) = 0.005 \cdot t(T-t)$ as in Figure~\ref{fig:episodic-finite-1}. The simulated trajectories $\bar{S}^n$ and $\bar{Q}^n$ track the theoretical $(\rho,\eta)$ closely, as in Figures~\ref{fig:periodic-finite-2} and \ref{fig:episodic-finite-2}, confirming the convergence in Theorem~\ref{thm:fluidlimitbuf}. Similarly, the blocking probability $\bar{B}^n(t)$ aligns with $1 - w^3(t)$, validating the finite-buffer fluid approximation in Corollary~\ref{cor:acceptprobn}. This is illustrated in Figures~\ref{fig:periodic-finite-3}-\ref{fig:periodic-finite-4} and \ref{fig:episodic-finite-3}-\ref{fig:episodic-finite-4} for $n=150$ and $n=5000$ both arrival types.

\begin{figure}
\centering
\begin{subfigure}{0.45\textwidth}
\centering
\includegraphics[width=\linewidth]{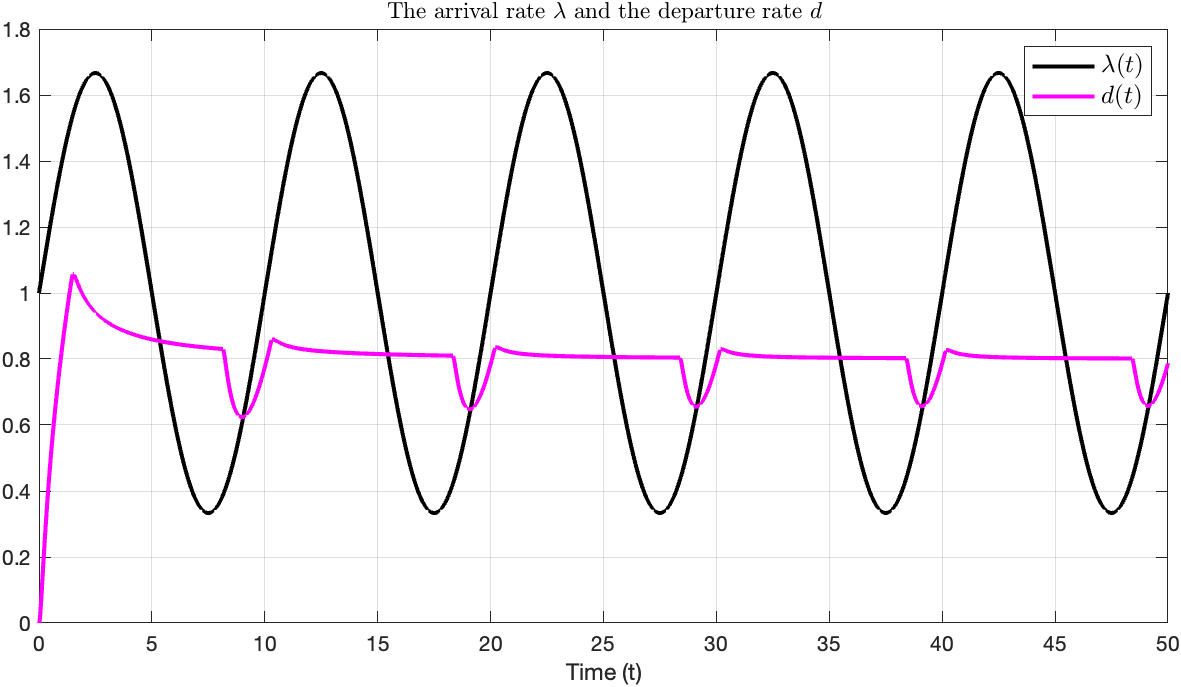}
\caption{}
\label{fig:periodic-finite-1}
\end{subfigure}%
\begin{subfigure}{0.45\textwidth}
\centering
\includegraphics[width=\linewidth]{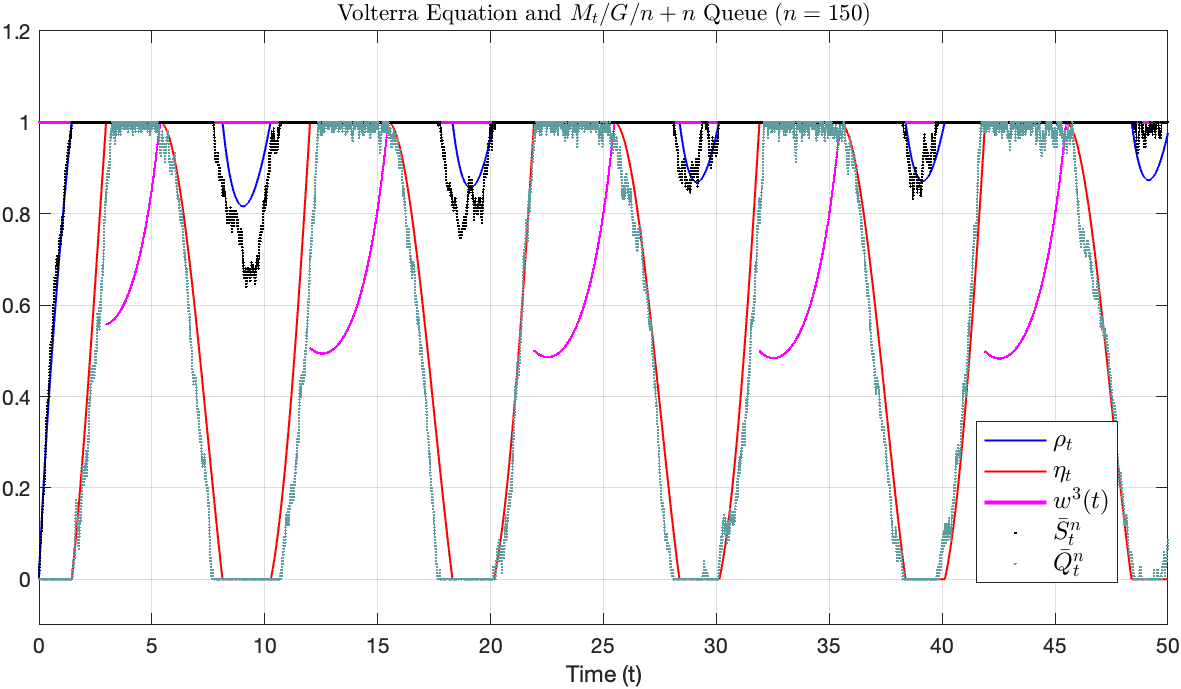}
\caption{}
\label{fig:periodic-finite-2}
\end{subfigure}
\begin{subfigure}{0.45\textwidth}
\centering
\includegraphics[width=\linewidth]{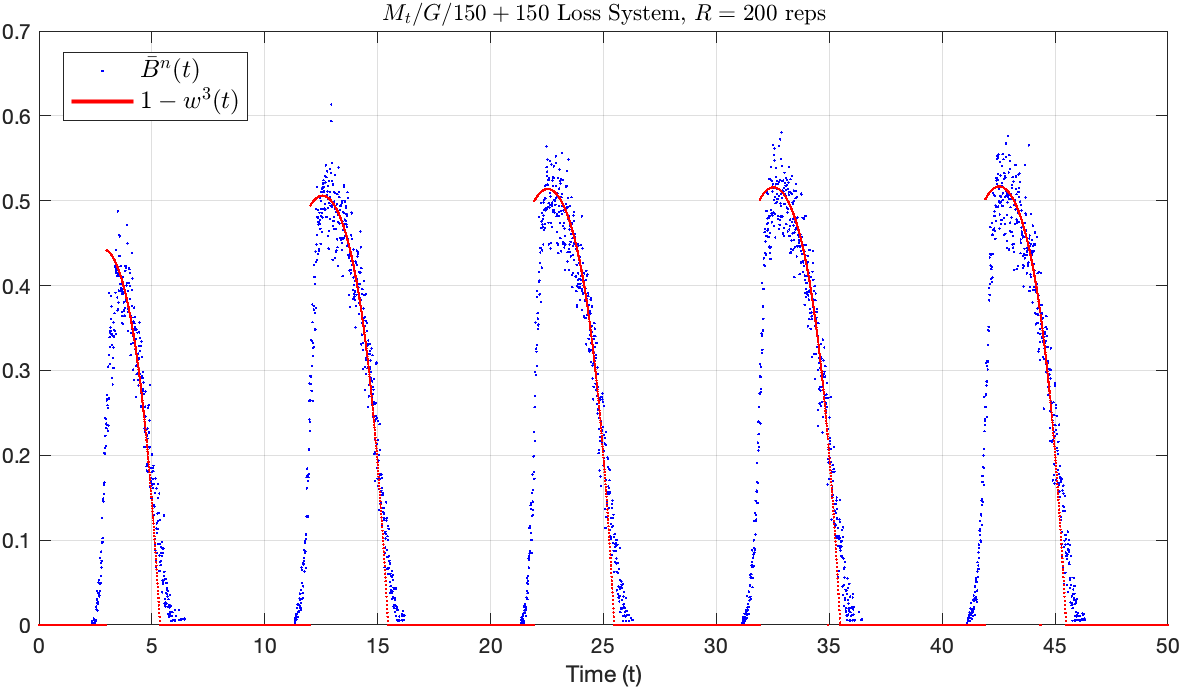}
\caption{}
\label{fig:periodic-finite-3}
\end{subfigure}
\begin{subfigure}{0.45\textwidth}
\centering
\includegraphics[width=\linewidth]{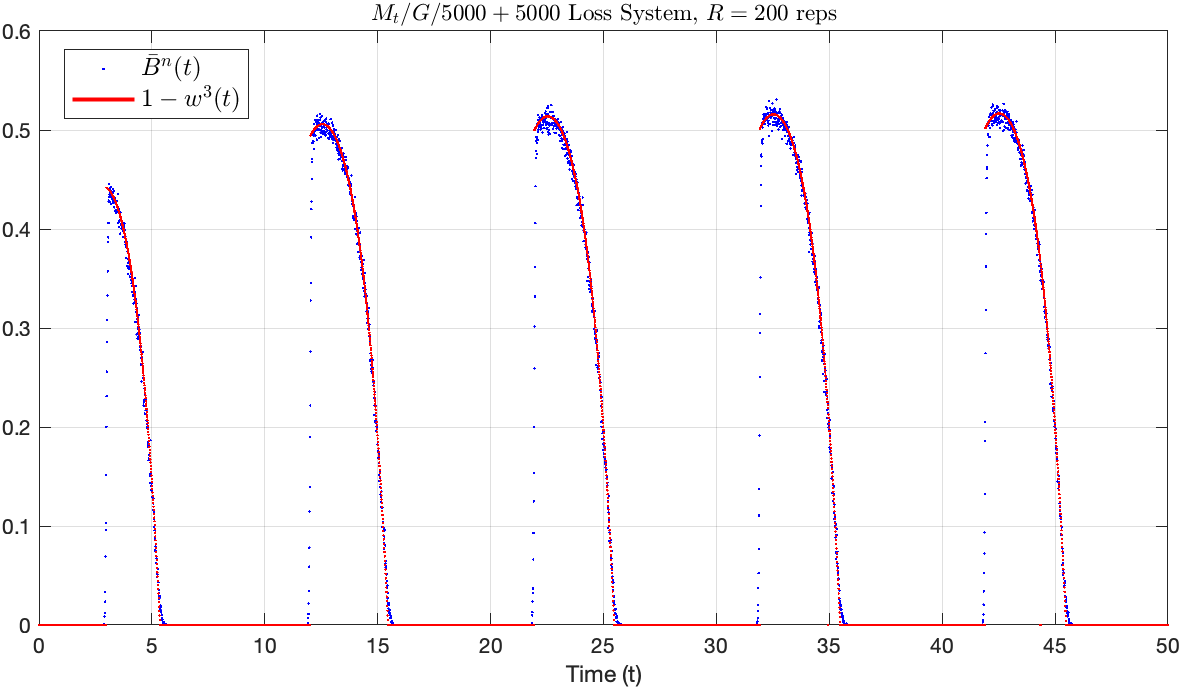}
\caption{}
\label{fig:periodic-finite-4}
\end{subfigure}
\caption{Finite Buffer Loss Queue with Periodic Arrival Rate}
\label{fig:periodic-finite}
\end{figure}

\begin{figure}
\centering
\begin{subfigure}{0.45\textwidth}
\centering
\includegraphics[width=\linewidth]{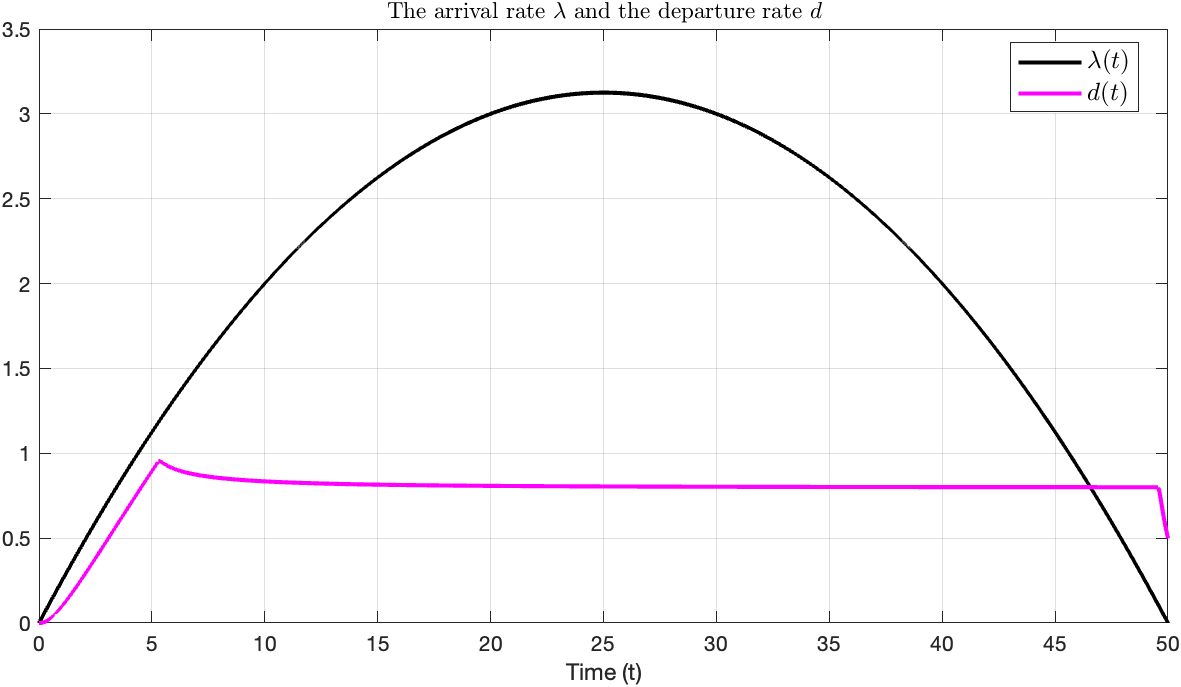}
\caption{}
\label{fig:episodic-finite-1}
\end{subfigure}%
\begin{subfigure}{0.45\textwidth}
\centering
\includegraphics[width=\linewidth]{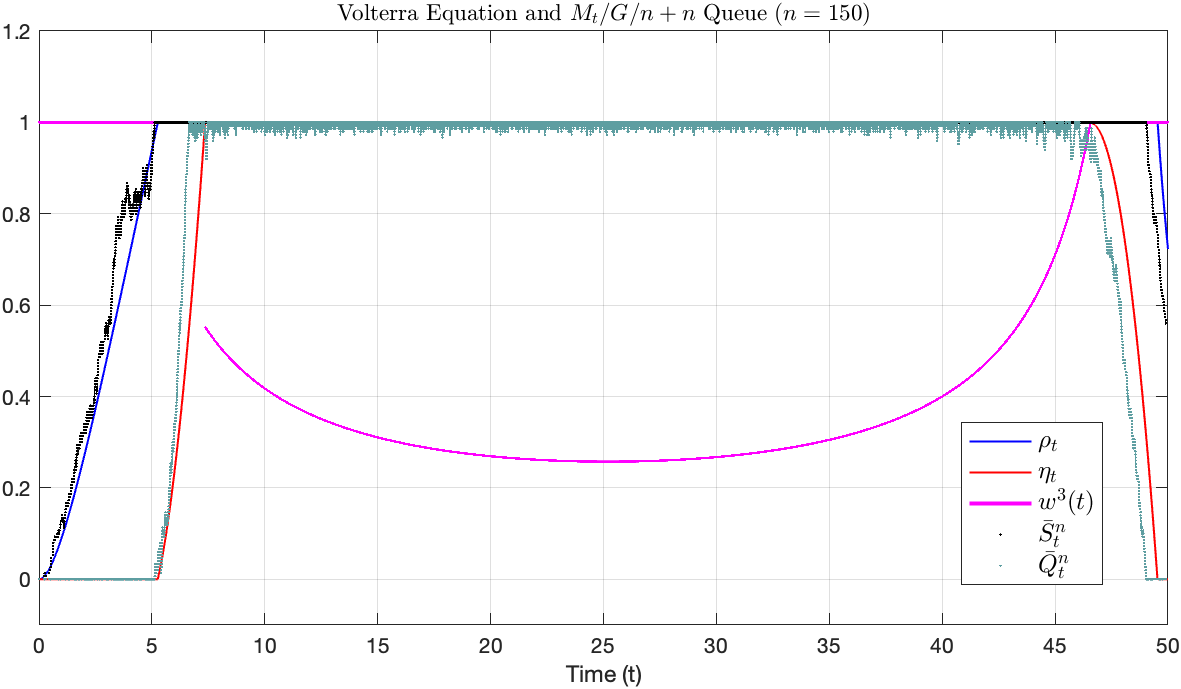}
\caption{}
\label{fig:episodic-finite-2}
\end{subfigure}
\begin{subfigure}{0.45\textwidth}
\centering
\includegraphics[width=\linewidth]{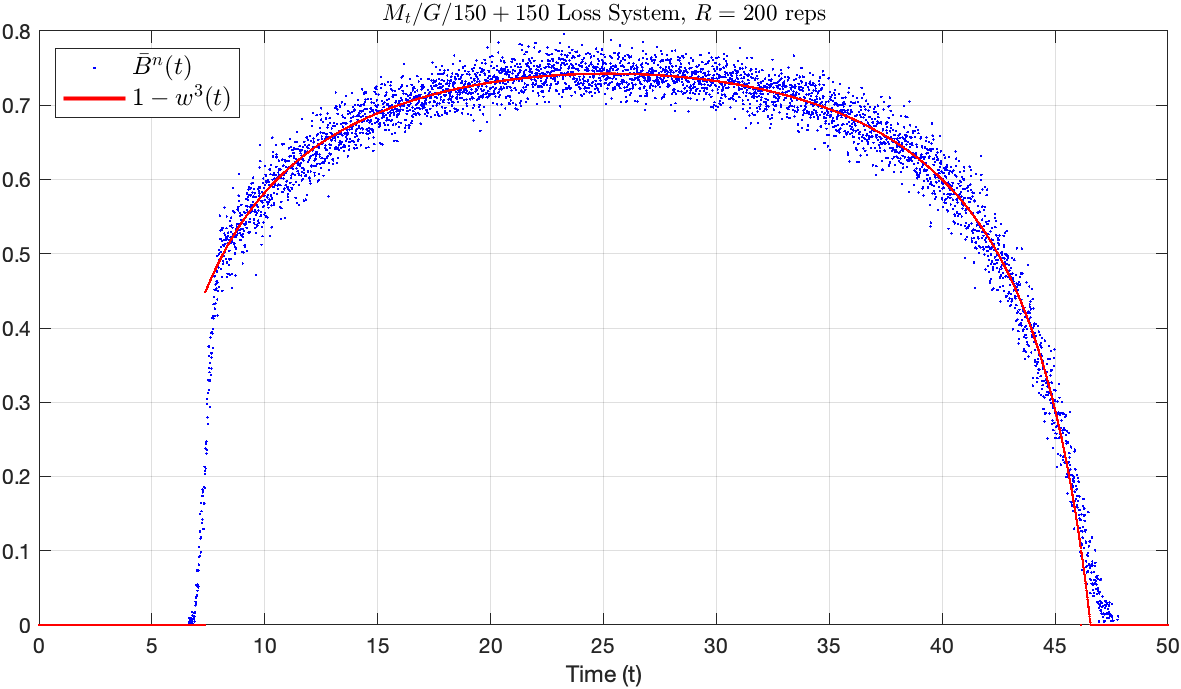}
\caption{}
\label{fig:episodic-finite-3}
\end{subfigure}
\begin{subfigure}{0.45\textwidth}
\centering
\includegraphics[width=\linewidth]{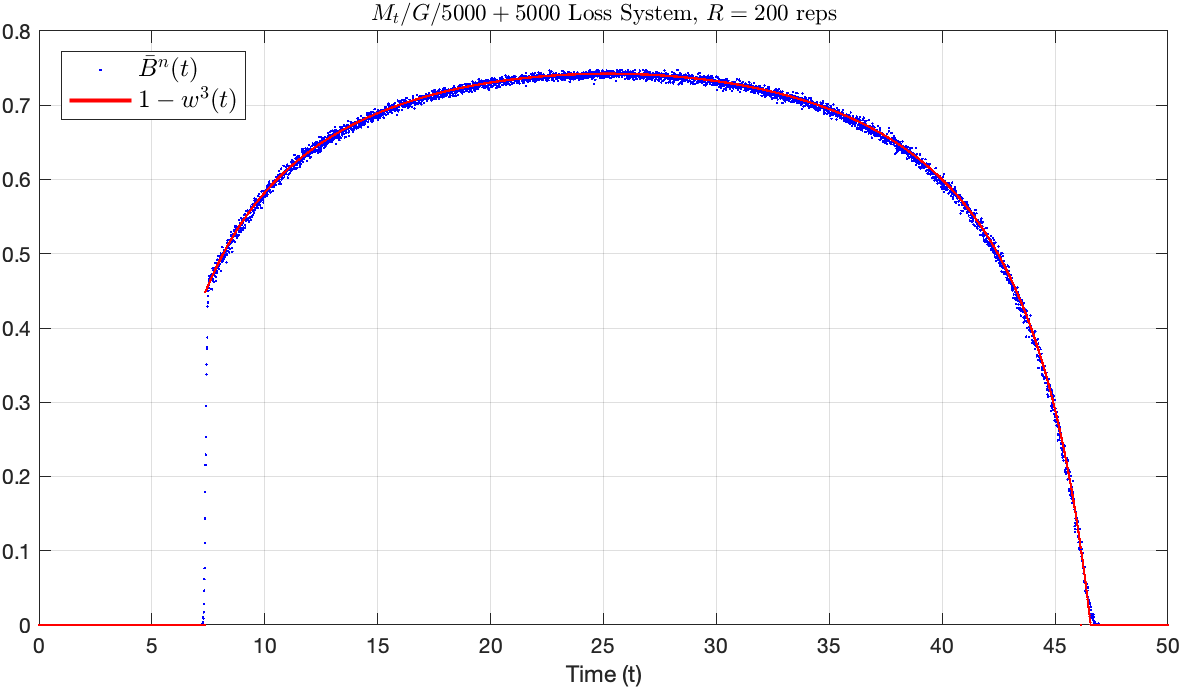}
\caption{}
\label{fig:episodic-finite-4}
\end{subfigure}
\caption{Finite Buffer Loss Queue with Episodic Arrival Rate}
\label{fig:episodic-finite}
\end{figure}

\begin{remark}
As noted in Remark~\ref{rk:wdiscontinuous} and \ref{rk:w3discontinuous}, the auxiliary functions $w$ and $w^3$ may exhibit discontinuities. In the numerical results, this discontinuity becomes evident as system size $n$ increases (e.g., $n=5000$). For smaller systems, the blocking probability appears smoother, but the underlying discontinuity emerges clearly in the large-system limit. 
\end{remark}

\subsection{Operational Perspectives.}
Finite capacity is a defining characteristic of many real-world service systems, such as call centers, emergency departments, and cloud resource pools. Such systems exhibit non-stationary queuing dynamics due to time-varying arrivals and general service times, making accurate transient analysis crucial for operational insights.

Our fluid limits for the zero-buffer $M_t/G/n/n$ and finite-buffer $M_t/G/n/n+b_n$ provide first-order approximations of the system occupancy, queue length, departure process, and acceptance probabilities as $n \to \infty$. These deterministic approximations offer a tractable foundation for operational optimization: they allow one to compute time-varying blocking probabilities directly and to optimize server and buffer capacities against transient performance constraints.

\subsubsection{Staffing Optimization in Zero-Buffer Systems.} 

\sloppy
Consider a sequence of non-stationary $M_t/G/c_n/c_n$ loss systems, with $c_n = \lfloor n c \rfloor$ servers and arrivals satisfying Assumption~\ref{asm:initial0}. For simplicity, assume that the system starts empty. Let the scaled number in the system or the proportion of occupied servers in the $n-$th system be $\bar{N}_t^n$ and the $n-$scaled cumulative departure process be $\bar{D}_t^n$. Then similar to the treatise done in Section~\ref{sec:zero-buffer} and Theorem~\ref{thm:fluidlimitini0}, we let the patient reader work out the details to conclude that
$$
\lim_{t \to \infty} \sup_{t \in [0,T]} \lln \bar{N}_t^n - \rho_t \rrn  = 0, \quad \lim_{t \to \infty} \sup_{t \in [0,T]} \lln \bar{D}_t^n - D_t \rrn = 0,
$$
almost surely where $D(t)$ is the fluid cumulative departure rate given by $D_t = \int_0^t d(u) du$ and $d(\cdot)$ is the fluid instantaneous departure rate whose dynamics is presented below. In addition, $\rho_t$ solves the discontinuous non-linear VIE given by
\beq\label{eq:rho-c-0}
\rho_t = \int_0^t \mathbb{1}_{\{ \rho_{u-}< c \}} \bar{G}(t-u)\la_u du.
\eeq
We note that the number of servers could be time-varying with $c_n(t) = \lfloor n c(t) \rfloor$ in which case our results will remain valid as long as the capacity constraint $\mathbb{1}_{\{\rho_{u-}< c_u\}}$ is incorporated in \eqref{eq:rho-c-0}. However for simplicity we consider $c(\cdot)$ to be constant.
Under Definition~\ref{def:sol_Vol}, \eqref{eq:rho-c-0} can be equivalently expressed as
\beq\label{eq:rho-c}
\rho_t = \int_0^t w_c(u) \bar{G}(t-u)\la(u) du,
\eeq
where 
\beq\label{eq:w-c}
w_c(t) = 
\begin{cases}
1, \quad &\text{if }\rho_t < c,\\
\frac{d(t)}{\la(t)} \wedge 1, \quad &\text{if }\rho_t = c,
\end{cases}
\eeq
and
\beq\label{eq:d-c}
d(t) = \int_0^t w_c(u) g(t-u)\la(u)du.
\eeq
Furthermore the acceptance probability converges uniformly
\beq\label{eq:ap-conv-1}
\lim_{n \to \infty} \sup_{t \in [0,T]} \lln P(\bar{N}_t^n < c_n) - w_c(t) \rrn = 0.
\eeq
Equations~\eqref{eq:rho-c}-\eqref{eq:d-c} and the convergence result~\eqref{eq:ap-conv-1} direct server capacity optimization while maintaining the transient blocking probability above a given threshold. This is particularly relevant in emergency departments or call centers where it is important from a managerial perspective to minimize the total number of blocked patients or customers. Summarizing, we solve the following problem in the fluid limit
$$
\min c, \text{ such that } \inf_{t \in [0,T]} w_c(t) \geq 1-\al,
$$
where $\al$ is the maximum allowable instantaneous blocking probability. {Since the infimum of the acceptance probability $w$ increases as the capacity $c$ increases, the problem admits a unique optimal solution to the above constrained optimization problem. Therefore we can apply standard root-finding techniques (e.g. the bisection method) to obtain the optimal server capacity $c^{\ast}$. For each value of $c$, we solve the discontinuous VIE by Algorithm~\ref{alg:1} to get $\inf_{t \in [0,T]} w_c(t)$, and stop searching once $\inf_{t \in [0,T]} w_{c^\ast}(t)=1-\al$.} Numerical results are presented in Figures~\ref{fig:per-c-ast-zero} and~\ref{fig:epi-c-ast-zero}, respectively, for periodic and episodic arrival rates, and for two choices of $\alpha$. 

\begin{figure}
\centering
\begin{subfigure}{.49\textwidth}
\centering
\includegraphics[width=\linewidth]{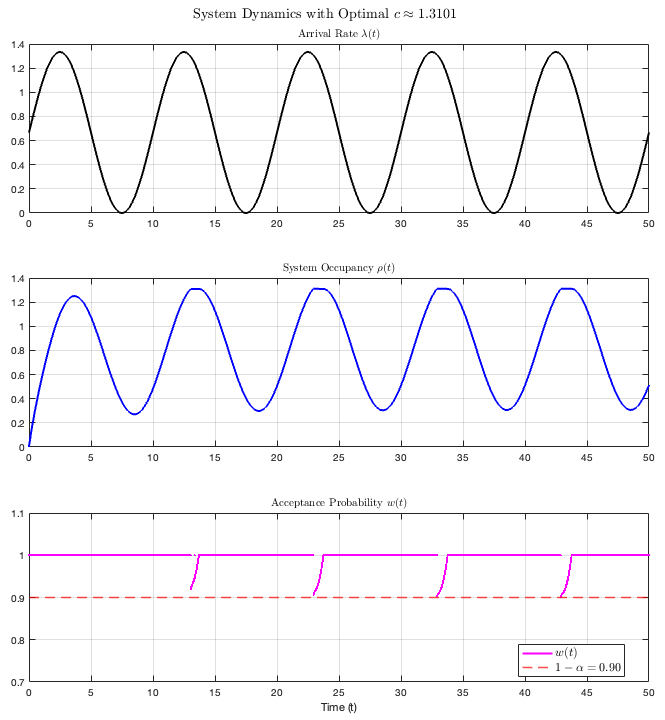}
\caption{Blocking probability ${\leq} \al = 0.1$}
\label{fig:per-c*-al=0.1}
\end{subfigure}%
\begin{subfigure}{.49\textwidth}
\centering
\includegraphics[width=\linewidth]{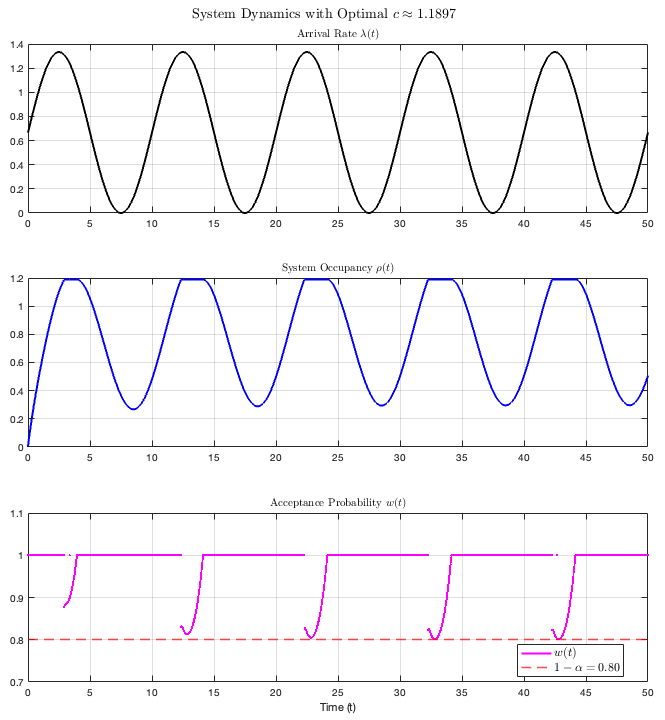}
\caption{Blocking probability ${\leq} \al = 0.2$}
\label{fig:per-c*-al=0.2}
\end{subfigure}
\caption{Optimal server capacity in zero-buffer loss queue with periodic arrival rate.}
\label{fig:per-c-ast-zero}
\end{figure}

\begin{figure}[h!]
\centering
\begin{subfigure}{.49\textwidth}
\centering
\includegraphics[width=\linewidth]{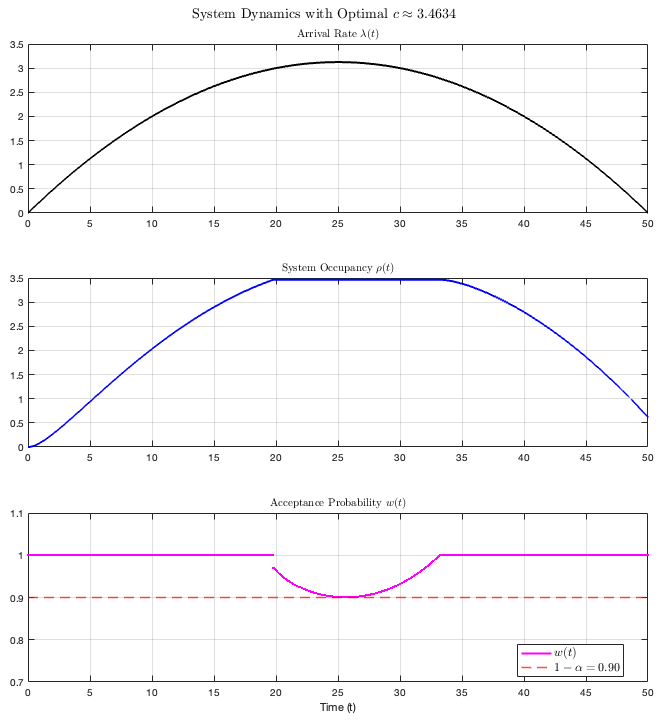}
\caption{Blocking probability ${\leq} \al = 0.1$}
\label{fig:epi-c*-al=0.1}
\end{subfigure}%
\begin{subfigure}{.49\textwidth}
\centering
\includegraphics[width=\linewidth]{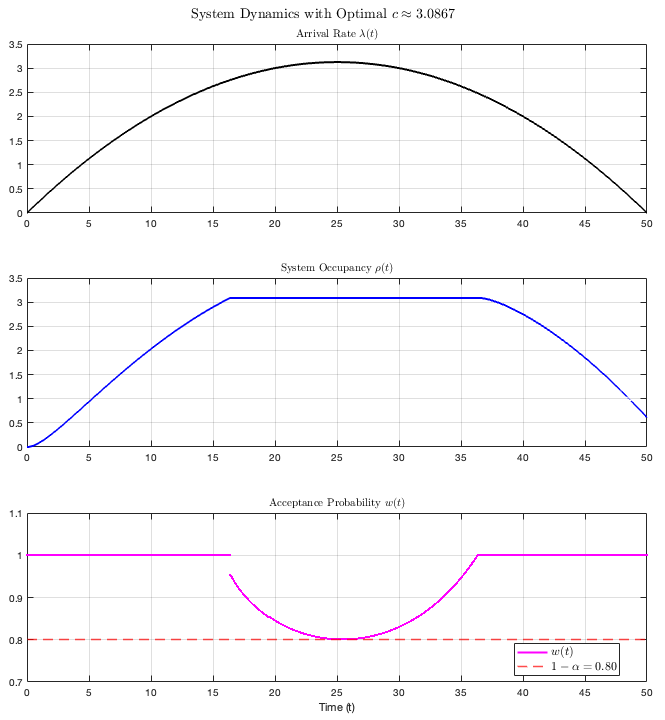}
\caption{Blocking probability ${\leq} \al = 0.2$}
\label{fig:epi-c*-al=0.2}
\end{subfigure}
\caption{Optimal server capacity in zero-buffer loss queue with episodic arrival rate.}
\label{fig:epi-c-ast-zero}
\end{figure}

\subsubsection{Joint Staffing and Buffer Capacity Optimization.} Now consider a sequence of non-stationary $M_t/G/c_n/c_n + b_n$ loss queuing systems, where $c_n = \lfloor n c \rfloor$, buffer size $b_n = \lfloor n \beta \rfloor$, and arrivals satisfy Assumption~\ref{asm:queue}. Similar to the previous case, server and buffer size could be time-varying with $c_n(t) = \lfloor n c(t) \rfloor$ and $b_n(t) = \lfloor n \beta(t)\rfloor$, and our results would still hold valid as long as $c(\cdot)$ and $\beta(\cdot)$ are piecewise constant. However, for simplicity, we do not consider those generalizations and also assume that the system starts empty. Let the scaled number being served or the proportion of occupied servers be $\bar{N}_t^n$, the scaled number waiting in buffer be $\bar{Q}_t^n$ and the $n-$scaled cumulative departure process be $\bar{D}_t^n$. Then similar to the treatise done in Section~\ref{sec:finite-buffer} and Theorem~\ref{thm:fluidlimitbuf}, we let the patient reader work out the details to conclude that
$$
\lim_{t \to \infty} \sup_{t \in [0,T]} \lln \bar{N}_t^n - \rho_t \rrn  = 0, \quad \lim_{t \to \infty} \sup_{t \in [0,T]} \lln \bar{Q}_t^n - \eta_t \rrn = 0 \quad \lim_{t \to \infty} \sup_{t \in [0,T]} \lln \bar{D}_t^n - D_t \rrn = 0, 
$$
almost surely where $D(t)$ is the fluid cumulative departure rate given by $D_t = \int_0^t d(u) du$ and $d(\cdot)$ is the fluid instantaneous departure rate whose dynamics is presented below. In addition, the fluid limits $(\rho, \eta, d)$ solves a coupled discontinuous nonlinear VIE system which interpreted according to Definition~\ref{def:sol_Vol} reads
\begin{align}\label{eq:rho-c-b}
&\rho_{t}=\int_{0}^{t} w_{c,\beta}^{1}(u) \bar{G}(t-u) \lambda(u) d u  +\int_{0}^{t} w_{c,\beta}^{2}(u) \bar{G}(t-u) d(u) d u, \nonumber \\
&\eta_{t}=\int_{0}^{t} (1-w_{c,\beta}^1(u))w_{c,\beta}^{3}(u) \lambda(u) d u  -\int_{0}^{t} w_{c,\beta}^{2}(u) d(u) d u, \nonumber\\
&D_t=\int_{0}^{t}w_{c,\beta}^{1}(u) G(t-u) \lambda(u) d u  +\int_{0}^{t} w_{c,\beta}^{2}(u) G(t-u) d(u) d u,
\end{align}
where the auxiliary functions $w_{c,\beta}^1$, $w_{c,\beta}^2$, $w_{c,\beta}^3$ evolve similar to \eqref{eq:w-c-b}:
\begin{align}\label{eq:w-c-b}
w_{c,\beta}^1(t)=1,w_{c,\beta}^2(t)=0,w_{c,\beta}^3(t)=1, &\qquad\rho_t<c,\eta_t=0 \nonumber\\
w_{c,\beta}^1(t)=1,w_{c,\beta}^2(t)=0,w_{c,\beta}^3(t)=1, &\qquad \rho_t=c,\eta_t=0 \nonumber\\
w_{c,\beta}^1(t)=0,w_{c,\beta}^2(t)=1,w_{c,\beta}^3(t)=1, &\qquad \rho_t=c,0<\eta_t<\beta \nonumber\\
w_{c,\beta}^1(t)=0,w_{c,\beta}^2(t)=1,w_{c,\beta}^3(t)=d(t)/\lambda(t)\wedge1, &\qquad \rho_t=c,\eta_t=\beta
\end{align}
The acceptance probability again satisfies
\beq\label{eq:ap-conv-2}
\lim_{n \to \infty} \sup_{t \in [0,T]} \lln P(\bar{Q}_t^n < b_n) - w_{c,\beta}^3(t) \rrn = 0.
\eeq
Using equations~\eqref{eq:rho-c-b}-\eqref{eq:w-c} and the convergence result~\eqref{eq:ap-conv-2}, we can formulate a joint staffing-buffer optimization problem in the fluid limit constrained to maintain the transient blocking probability above a threshold:
$$
\min v\cdot c+ (1-v) \cdot \beta, \text{ such that }  \inf_{t \in [0,T]} w_{c,\beta}^3(t) \geq 1-\al,
$$
where $v$ weights the relative cost of servers and buffer space, and $\al$ denotes the maximum allowable instantaneous blocking probability. For each $c$, the infimum of the acceptance probability $w^3$ increases with buffer size $\beta$. Thus, there exists a unique $\beta_c$ such that $\inf_{t \in [0,T]} w^3_{c,\beta_c}(t) = 1 - \al$. We can perform a grid search for $c$ and apply standard root-finding techniques (e.g., bisection method) to determine the optimal $\beta_c$ for each $c$. At each $c$ and $\beta$, we solve the discontinuous VIE system using Algorithm~\ref{alg:2} to obtain $\inf_{t \in [0,T]} w^3_{c,\beta}(t)$. The search terminates when $\inf_{t \in [0,T]} w^3_{c,\beta_c}(t) = 1 - \al$. The optimal solution is the capacity $c^\ast$ that minimizes $v \cdot c^\ast + (1 - v) \cdot \beta_{c^\ast}$ during the grid search. Numerical results are presented in Figure~\ref{fig:c*-b*-fin}. It is important to note that this solution provides a rudimentary approach to solving the constrained optimization problem. While there may be more effective optimization techniques available, the focus of this paper is not on exploring such alternative solutions.
\begin{figure}
%\centering
\begin{subfigure}{.49\textwidth}
\includegraphics[width=\linewidth]{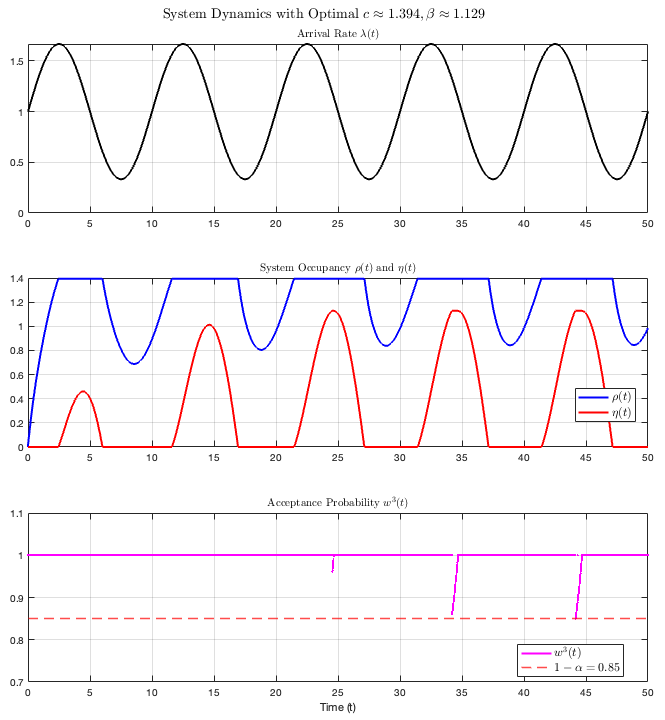}
\caption{}
\end{subfigure}%
\begin{subfigure}{.49\textwidth}
\includegraphics[width=\linewidth]{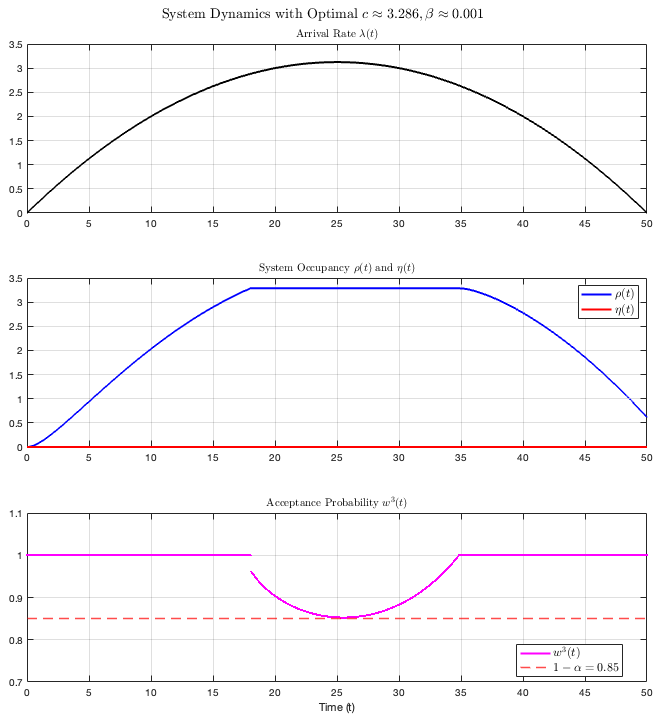}
\caption{}
\end{subfigure}%
\caption{Optimal server and buffer capacity with periodic and episodic arrival rates.}
\label{fig:c*-b*-fin}
\end{figure}

\section{Conclusion}\label{sec:conclusion}
This paper developed a unified fluid-limit framework for nonstationary many-server loss systems with general service-time distributions. In the first part, we established a functional strong law of large numbers for the zero-buffer $M_t/G/n/n$ model via a discontinuous Volterra integral equation representation. The second part extended this analysis to the finite-buffer $M_t/G/n/(n+b_n)$ model, showing that the joint dynamics of servers, buffer occupancy, and departures satisfy a coupled system of discontinuous Volterra equations. In both regimes, we proved existence and uniqueness of the limiting trajectories and convergence of the associated time-varying acceptance and blocking probabilities.

The results demonstrate that deterministic fluid models can accurately describe transient behavior in large-scale, non-Markovian, time-varying loss systems. The discontinuous Volterra structure captures admission control and boundary effects within a mathematically rigorous and computationally tractable framework, bridging the gap between asymptotic theory and operational approximation.

Beyond theoretical insight, the model provides a practical basis for performance evaluation and real-time decision-making. We showed how the fluid limit can be used for optimal staffing and buffer capacity design, and the same structure can naturally extend to dynamic control settings. Future work may pursue diffusion refinements, stochastic perturbation analysis, and optimization-based control formulations, further integrating transient queuing dynamics into the broader landscape of stochastic operations management.

\bigskip

\bibliographystyle{plain}

\bibliography{ref}

@article{chakraborty2021many,
	title={A many-server functional strong law for a non-stationary loss model},
	author={Chakraborty, Prakash and Honnappa, Harsha},
	journal={Operations Research Letters},
	volume={49},
	number={3},
	pages={338--344},
	year={2021},
	publisher={Elsevier}
}

@book{billingsley2013convergence,
	title={Convergence of probability measures},
	author={Billingsley, Patrick},
	year={2013},
	publisher={John Wiley \& Sons}
}

@book{bremaud2024introduction,
	title={An introduction to applied probability},
	author={Br{\'e}maud, Pierre},
	year={2024},
	publisher={Springer}
}

@book{billingsley2017probability,
	title={Probability and measure},
	author={Billingsley, Patrick},
	year={2017},
	publisher={John Wiley \& Sons}
}

@book{bressan2012lecture,
	title={Lecture notes on functional analysis},
	author={Bressan, Alberto},
	year={2012},
	publisher={American Mathematical Society}
}

@book{folland1999real,
	title={Real analysis: modern techniques and their applications},
	author={Folland, Gerald B},
	year={1999},
	publisher={John Wiley \& Sons}
}

@misc{protter2004stochastic,
	title={Stochastic Integration and Differential equations},
	author={Protter, Phillip E},
	year={2004},
	publisher={Citeseer}
}

@article{kiffe1979discontinuous,
 ISSN = {01635549},
 URL = {http://www.jstor.org/stable/26163845},
 author = {T. Kiffe},
 journal = {Journal of Integral Equations},
 number = {3},
 pages = {193--200},
 publisher = {Rocky Mountain Mathematics Consortium},
 title = {A Discontinuous Volterra Integral Equation},
 urldate = {2025-10-25},
 volume = {1},
 year = {1979}
}

@ARTICLE{vsremr2012differentiation,
  title     = "On differentiation of a Lebesgue Integral with respect to a
               Parameter",
  author    = "{\v S}remr, Ji{\v r}{\'\i}",
  journal   = "Mathematics for Applications",
  publisher = "Brno University of Technology",
  volume    =  1,
  number    =  1,
  pages     = "91--116",
  year      =  2012
}

@book{brunner2017volterra,
  title={Volterra integral equations: an introduction to theory and applications},
  author={Brunner, Hermann},
  volume={30},
  year={2017},
  publisher={Cambridge University Press}
}

@article{whitt2018time,
  title={Time-varying queues},
  author={Whitt, Ward},
  journal={Queueing models and service management},
  volume={1},
  number={2},
  year={2018}
}

@article{halfin1981heavy,
  title={Heavy-traffic limits for queues with many exponential servers},
  author={Halfin, Shlomo and Whitt, Ward},
  journal={Operations research},
  volume={29},
  number={3},
  pages={567--588},
  year={1981},
  publisher={Informs}
}

@ARTICLE{kaspi2011law,
  title     = "Law of large numbers limits for many-server queues",
  author    = "Kaspi, Haya and Ramanan, Kavita",
  journal   = "The Annals of Applied Probability",
  publisher = "Institute of Mathematical Statistics",
  volume    =  21,
  number    =  1,
  pages     = "33--114",
  year      =  2011,
  language  = "en"
}

@ARTICLE{reed2009g,
  title     = "The {$G/GI/N$} queue in the {Halfin--Whitt} regime",
  author    = "Reed, Josh",
  journal   = "The Annals of Applied Probability",
  publisher = "Institute of Mathematical Statistics",
  volume    =  19,
  number    =  6,
  pages     = "2211--2269",
  year      =  2009,
  language  = "en"
}

@article{zhang2013fluid,
  title={Fluid models of many-server queues with abandonment},
  author={Zhang, Jiheng},
  journal={Queueing Systems},
  volume={73},
  number={2},
  pages={147--193},
  year={2013},
  publisher={Springer}
}

@article{liu2012many,
  title={A many-server fluid limit for the {$G_t/GI/s_t+ GI$} queueing model experiencing periods of overloading},
  author={Liu, Yunan and Whitt, Ward},
  journal={Operations Research Letters},
  volume={40},
  number={5},
  pages={307--312},
  year={2012},
  publisher={Elsevier}
}

@article{whitt2017many,
  title={Many-server loss models with non-Poisson time-varying arrivals},
  author={Whitt, Ward and Zhao, Jingtong},
  journal={Naval Research Logistics (NRL)},
  volume={64},
  number={3},
  pages={177--202},
  year={2017},
  publisher={Wiley Online Library}
}

@article{massey1994analysis,
 ISSN = {10505164},
 URL = {http://www.jstor.org/stable/2245085},
 author = {William A. Massey and Ward Whitt},
 journal = {The Annals of Applied Probability},
 number = {4},
 pages = {1145--1160},
 publisher = {Institute of Mathematical Statistics},
 title = {An Analysis of the Modified Offered-Load Approximation for the Nonstationary Erlang Loss Model},
 urldate = {2025-10-25},
 volume = {4},
 year = {1994}
}

@article{pender2015,
  title={Nonstationary loss queues via cumulant moment approximations},
  author={Pender, Jamol},
  journal={Probability in the Engineering and Informational Sciences},
  volume={29},
  number={1},
  pages={27--49},
  year={2015},
  publisher={Cambridge University Press}
}

@article{pender2017,
  title={Approximations for the queue length distributions of time-varying many-server queues},
  author={Pender, Jamol and Ko, Young Myoung},
  journal={INFORMS Journal on Computing},
  volume={29},
  number={4},
  pages={688--704},
  year={2017},
  publisher={INFORMS}
}

@article{liu2014many,
 ISSN = {10505164},
 URL = {http://www.jstor.org/stable/42919735},
 author = {Yunan Liu and Ward Whitt},
 journal = {The Annals of Applied Probability},
 number = {1},
 pages = {378--421},
 publisher = {Institute of Mathematical Statistics},
 title = {MANY-SERVER HEAVY-TRAFFIC LIMIT FOR QUEUES WITH TIME-VARYING PARAMETERS},
 urldate = {2025-10-25},
 volume = {24},
 year = {2014}
}

@article{erlang1917,
  title={Solution of some problems in the theory of probabilities of significance in automatic telephone exchanges},
  author={Erlang, Agner Krarup},
  journal={Post Office Electrical Engineer's Journal},
  volume={10},
  pages={189--197},
  year={1917}
}

@book{resnick2008extreme,
  title={Extreme values, regular variation, and point processes},
  author={Resnick, Sidney I},
  volume={4},
  year={2008},
  publisher={Springer Science \& Business Media}
}

@article{kiffe1978existence,
  title={Existence and uniqueness of solutions to abstract Volterra integral equations},
  author={Kiffe, T. and Stecher, M.},
  journal={Proceedings of the American Mathematical Society},
  volume={68},
  number={2},
  pages={169--175},
  year={1978}
}

@article{kiffe19792,
author = {Kiffe, T. and Stecher, M.},
title = {{$L^2 $} Solutions of Volterra Integral Equations},
journal = {SIAM Journal on Mathematical Analysis},
volume = {10},
number = {2},
pages = {274-280},
year = {1979},
doi = {10.1137/0510026},

URL = {    
        https://doi.org/10.1137/0510026
},
eprint = {     
        https://doi.org/10.1137/0510026
}
}

@book{durrett2019probability,
  title={Probability: theory and examples},
  author={Durrett, Rick},
  volume={49},
  year={2019},
  publisher={Cambridge university press}
}

@article{mandelbaum1998,
  title={Strong approximations for Markovian service networks},
  author={Mandelbaum, Avi and Massey, William A and Reiman, Martin I},
  journal={Queueing Systems},
  volume={30},
  number={1},
  pages={149--201},
  year={1998},
  publisher={Springer}
}

@article{jennings1996,
  title={Server staffing to meet time-varying demand},
  author={Jennings, Otis B and Mandelbaum, Avishai and Massey, William A and Whitt, Ward},
  journal={Management Science},
  volume={42},
  number={10},
  pages={1383--1394},
  year={1996},
  publisher={INFORMS}
}

@article{green2007,
  title={Coping with time-varying demand when setting staffing requirements for a service system},
  author={Green, Linda V and Kolesar, Peter J and Whitt, Ward},
  journal={Production and Operations Management},
  volume={16},
  number={1},
  pages={13--39},
  year={2007},
  publisher={Wiley Online Library}
}

@article{green1991,
  title={The pointwise stationary approximation for queues with nonstationary arrivals},
  author={Green, Linda and Kolesar, Peter},
  journal={Management Science},
  volume={37},
  number={1},
  pages={84--97},
  year={1991},
  publisher={INFORMS}
}

@article{whitt2019,
  title={Time-varying robust queueing},
  author={Whitt, Ward and You, Wei},
  journal={Operations Research},
  volume={67},
  number={6},
  pages={1766--1782},
  year={2019},
  publisher={INFORMS}
}

@article{deBruin2009,
  title = {Dimensioning hospital wards using the Erlang loss model},
  volume = {178},
  ISSN = {1572-9338},
  url = {http://dx.doi.org/10.1007/s10479-009-0647-8},
  DOI = {10.1007/s10479-009-0647-8},
  number = {1},
  journal = {Annals of Operations Research},
  publisher = {Springer Science and Business Media LLC},
  author = {de Bruin,  A. M. and Bekker,  R. and van Zanten,  L. and Koole,  G. M.},
  year = {2009},
  pages = {23–43}
}

@article{Bekker2016,
  title = {Flexible bed allocations for hospital wards},
  volume = {20},
  ISSN = {1572-9389},
  url = {http://dx.doi.org/10.1007/s10729-016-9364-4},
  DOI = {10.1007/s10729-016-9364-4},
  number = {4},
  journal = {Health Care Management Science},
  publisher = {Springer Science and Business Media LLC},
  author = {Bekker,  René and Koole,  Ger and Roubos,  Dennis},
  year = {2016},
  pages = {453–466}
}

@article{Andersen2017,
  title = {Optimization of hospital ward resources with patient relocation using Markov chain modeling},
  volume = {260},
  ISSN = {0377-2217},
  url = {http://dx.doi.org/10.1016/j.ejor.2017.01.026},
  DOI = {10.1016/j.ejor.2017.01.026},
  number = {3},
  journal = {European Journal of Operational Research},
  publisher = {Elsevier BV},
  author = {Andersen,  Anders Reenberg and Nielsen,  Bo Friis and Reinhardt,  Line Blander},
  year = {2017},
  pages = {1152–1163}
}

@article{Kelly1986,
  title = {Blocking probabilities in large circuit-switched networks},
  volume = {18},
  ISSN = {1475-6064},
  url = {http://dx.doi.org/10.2307/1427309},
  DOI = {10.2307/1427309},
  number = {2},
  journal = {Advances in Applied Probability},
  publisher = {Cambridge University Press (CUP)},
  author = {Kelly,  F. P.},
  year = {1986},
  pages = {473–505}
}

@article{Eklundh1986,
  title = {Channel Utilization and Blocking Probability in a Cellular Mobile Telephone System with Directed Retry},
  volume = {34},
  ISSN = {0096-2244},
  url = {http://dx.doi.org/10.1109/TCOM.1986.1096544},
  DOI = {10.1109/tcom.1986.1096544},
  number = {4},
  journal = {IEEE Transactions on Communications},
  publisher = {Institute of Electrical and Electronics Engineers (IEEE)},
  author = {Eklundh,  B.},
  year = {1986},
  pages = {329–337}
}

@article{DaehyoungHong1986,
  title = {Traffic model and performance analysis for cellular mobile radio telephone systems with prioritized and nonprioritized handoff procedures},
  volume = {35},
  ISSN = {1939-9359},
  url = {http://dx.doi.org/10.1109/T-VT.1986.24076},
  DOI = {10.1109/t-vt.1986.24076},
  number = {3},
  journal = {IEEE Transactions on Vehicular Technology},
  publisher = {Institute of Electrical and Electronics Engineers (IEEE)},
  author = {Daehyoung Hong and Rappaport,  S.S.},
  year = {1986},
  pages = {77–92}
}

@article{Li1996,
  title = {Performance model of interactive video-on-demand systems},
  volume = {14},
  ISSN = {0733-8716},
  url = {http://dx.doi.org/10.1109/49.508281},
  DOI = {10.1109/49.508281},
  number = {6},
  journal = {IEEE Journal on Selected Areas in Communications},
  publisher = {Institute of Electrical and Electronics Engineers (IEEE)},
  author = {Li,  V.O.K. and Wanjiun Liao and Xiaoxin Qiu and Wong,  E.W.M.},
  year = {1996},
  pages = {1099–1109}
}

@article{Vakilinia2015,
  title = {Modeling of the resource allocation in cloud computing centers},
  volume = {91},
  ISSN = {1389-1286},
  url = {http://dx.doi.org/10.1016/j.comnet.2015.08.030},
  DOI = {10.1016/j.comnet.2015.08.030},
  journal = {Computer Networks},
  publisher = {Elsevier BV},
  author = {Vakilinia,  Shahin and Ali,  Mustafa Mehmet and Qiu,  Dongyu},
  year = {2015},
  pages = {453–470}
}

@article{Wang2014,
  title = {Performance Analysis of Circuit Switched Multi-Service Multi-Rate Networks With Alternative Routing},
  volume = {32},
  ISSN = {1558-2213},
  url = {http://dx.doi.org/10.1109/JLT.2013.2289925},
  DOI = {10.1109/jlt.2013.2289925},
  number = {2},
  journal = {Journal of Lightwave Technology},
  publisher = {Institute of Electrical and Electronics Engineers (IEEE)},
  author = {Wang,  Meiqian and Li,  Shuo and Wong,  Eric W. M. and Zukerman,  Moshe},
  year = {2014},
  pages = {179–200}
}

@article{Zalesky2007,
  title = {OBS contention resolution performance},
  volume = {64},
  ISSN = {0166-5316},
  url = {http://dx.doi.org/10.1016/j.peva.2006.06.002},
  DOI = {10.1016/j.peva.2006.06.002},
  number = {4},
  journal = {Performance Evaluation},
  publisher = {Elsevier BV},
  author = {Zalesky,  A. and Vu,  H.L. and Rosberg,  Z. and Wong,  E.W.M. and Zukerman,  M.},
  year = {2007},
  pages = {357–373}
}

@article{Hampshire2020,
  title = {Beyond safety drivers: Applying air traffic control principles to support the deployment of driverless vehicles},
  volume = {15},
  ISSN = {1932-6203},
  url = {http://dx.doi.org/10.1371/journal.pone.0232837},
  DOI = {10.1371/journal.pone.0232837},
  number = {5},
  journal = {PLOS ONE},
  publisher = {Public Library of Science (PLoS)},
  author = {Hampshire,  Robert C. and Bao,  Shan and Lasecki,  Walter S. and Daw,  Andrew and Pender,  Jamol},
  editor = {Guo,  Yanyong},
  year = {2020},
  month = may,
  pages = {e0232837}
}

\end{document}